\documentclass{article}


\usepackage{xcolor}

\usepackage{amsmath}
\usepackage{amsfonts}
\usepackage{amssymb}
\usepackage{amsthm}
\usepackage{array}
\usepackage{stmaryrd}
\usepackage{tikz-cd, tikz}
\usetikzlibrary{calc}
\usepackage{graphicx}
\usepackage{thmtools}
\usepackage{thm-restate}
\allowdisplaybreaks
\usepackage{hyperref}
\hypersetup{
}
\usepackage{cleveref}
\usepackage[disable]{todonotes}

\newtheorem{theorem}{Theorem}
\newtheorem*{theorem*}{Theorem}
\newtheorem{lemma}[theorem]{Lemma}
\newtheorem*{lemma*}{Lemma}
\newtheorem{proposition}[theorem]{Proposition}
\newtheorem*{proposition*}{Proposition}
\newtheorem{corollary}[theorem]{Corollary}
\theoremstyle{definition}
\newtheorem{definition}[theorem]{Definition}
\theoremstyle{remark}
\newtheorem{remark}[theorem]{Remark}

\newcommand{\Z}{\mathbf{Z}}

\newcommand{\Q}{\mathbf{Q}}
\newcommand{\Qbar}{\overline{\Q}}
\newcommand{\R}{\mathbf{R}}
\newcommand{\C}{\mathbf{C}}
\newcommand{\N}{\mathbf{N}}
\newcommand{\F}{\mathbf{F}}
\newcommand{\A}{\mathbf{A}}

\newcommand{\calO}{\mathcal{O}}
\newcommand{\fraka}{\mathfrak{a}}
\newcommand{\frakl}{\mathfrak{l}}
\newcommand{\frakm}{\mathfrak{m}}

\newcommand{\frakp}{\mathfrak{p}}
\newcommand{\frakP}{\mathfrak{P}}


\newcommand{\End}{\textnormal{End}}

\newcommand{\Frob}{\textnormal{Frob}}
\newcommand{\Gal}{\textnormal{Gal}}

\newcommand{\Norm}{\textnormal{Norm}}

\newcommand{\ord}{\textnormal{ord}}

\newcommand{\red}{\textnormal{red}}

\newcommand{\Sym}{\textnormal{Sym}}
\newcommand{\cyclo}{\omega}

\newcommand{\rank}{\textnormal{rank}}

\newcommand{\Trace}{\textnormal{Trace}}

\newcommand{\GL}{\textnormal{GL}}
\newcommand{\SL}{\textnormal{SL}}
\newcommand{\PGL}{\textnormal{PGL}}
\newcommand{\Frac}{\textnormal{Frac}}
\newcommand{\Spec}{\textnormal{Spec}}

\newcommand{\Realpart}{\operatorname{Re}}
\newcommand{\house}[1]{%
  \tikz[baseline]{\node[anchor=base,inner sep=0.3ex](mynode){\ensuremath{\vphantom{A}#1}};
  \draw(mynode.south west)--(mynode.north west)--(mynode.north east)--(mynode.south east);
  \path[use as bounding box]($(mynode.south west)+(-0.3ex,-0.3ex)$)rectangle($(mynode.north east)+(0.3ex,0.3ex)$);}
} 

\usepackage[backend=bibtex]{biblatex}
\addbibresource{bib.bib}

\title{Classical forms of weight one in ordinary families}
\author{Eric Stubley}
\date{\today}

\begin{document}
\maketitle
\begin{abstract}
We develop a new strategy for studying low weight specializations of $p$-adic families of ordinary modular forms.
In the elliptic case, we give a new proof of a result of Ghate--Vatsal which states that a Hida family contains infinitely many classical eigenforms of weight one if and only if it has complex multiplication.
Our strategy is designed to explicitly avoid use of the related facts that the Galois representation attached to a classical weight one eigenform has finite image, and that classical weight one eigenforms satisfy the Ramanujan conjecture.
We indicate how this strategy might be used to prove similar statement in the case of partial weight one Hilbert modular forms, given a suitable development of Hida theory in that setting.
\end{abstract}

\tableofcontents

\section{Introduction}\label{sec:introduction}

The goal of this article is to provide a new proof of a theorem of Ghate and Vatsal, which states that a $p$-adic family of $p$-ordinary modular forms contains infinitely many classical forms of weight one if and only if the family has complex multiplication.
Our reason for seeking out a new proof of this result is to avoid the use of the related facts that classical eigenforms of weight one satisfy the Ramanujan conjecture and their associated Galois representations have finite image.
We wish to avoid using these facts so as to provide a proof strategy which is likely to generalize to the case of partial weight one Hilbert modular forms, where the Ramanujan conjecture is as yet unknown.

Let us formulate this result precisely.
We fix an odd prime number $p$, and let $\Lambda = \Z_{p}\llbracket 1 + p\Z_{p}\rrbracket$ denote the Iwasawa algebra, which we think of as parametrizing continuous $p$-adic characters of the multiplicative group $1 + p\Z_{p}$.

We say that a classical modular eigenform is said to be $p$-ordinary if its $U_{p}$-eigenvalue is a $p$-adic unit.
We will be interested in $p$-adic families of $p$-ordinary modular eigenforms.
Pick a natural number $N$ coprime to $p$, which will serve as the prime-to-$p$ level of the modular forms we study.
Hida's theory of $p$-ordinary families produces a finite free $\Lambda$-module $\mathbf{H}^{\ord}(N; \Z_{p})$ containing elements $T_{\ell}$ for every prime $\ell \nmid Np$ and $U_{\ell}$ for each prime $\ell | Np$, which is universal in the following sense.
For any normalized modular eigenform $f$ which is of weight $k \geq 2$, level of the form $Np^{r}$ for some $r \geq 0$, and which is $p$-ordinary, there is a unique homomorphism
\[
\mathbf{H}^{\ord}(N; \Z_{p}) \to \Qbar_{p}
\]
determined by sending the elements $T_{\ell}$ (or $U_{\ell}$ if $\ell|Np$) to the $\ell$-th Hecke eigenvalue of $f$.
For such $f$, we say that $\mathbf{H}^{\ord}(N; \Z_{p})$ specializes to $f$.

The ordinary Hecke algebra $\mathbf{H}^{\ord}(N; \Z_{p})$ is constructed as a Hecke algebra acting on a large space of $p$-adic modular forms.
It is known that any eigensystem $\mathbf{H}^{\ord}(N; \Z_{p}) \to \Qbar_{p}$ which is in arithmetic weight $k \geq 2$ (meaning the composite map $\Lambda \to \mathbf{H}^{\ord}(N; \Z_{p}) \to \Qbar_{p}$ is a finite order character times the $(k-1)$-st power of the cyclotomic character) is the eigensystem of a classical, rather than just a $p$-adic, modular form.
However this is no longer true for eigensystems in weight one (where the corresponding map $\Lambda \to \Qbar_{p}$ is a finite order character): such an eigensystem may or may not be that of a classical modular form.

The following theorem of Ghate and Vatsal characterizes exactly when a component of $\mathbf{H}^{\ord}(N; \Z_{p})$ admits infinitely many classical weight one specializations.

\begin{theorem}[Ghate--Vatsal, cf. proposition 14 of \cite{ghate_vatsal}]\label{thm:intro_elliptic}
Let $\mathbf{I}$ be a reduced irreducible component of the ordinary $\Lambda$-adic Hecke algebra $\mathbf{H}^{\ord}(N; \Z_{p})$.
Then $\mathbf{I}$ specializes to infinitely many classical eigenforms of weight one if and only if $\mathbf{I}$ has complex multiplication.
\end{theorem}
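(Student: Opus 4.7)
The sufficiency direction is well known: if $\mathbf{I}$ has complex multiplication by an imaginary quadratic field $K$, then $\mathbf{I}$ is cut out by induction from a $\Lambda$-adic Hecke character of $G_{K}$, and the infinitely many finite-order specializations of that character produce classical theta series of weight one. I therefore focus on the necessity.

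Suppose $\mathbf{I}$ admits classical weight one specializations at an infinite collection of distinct height-one primes $\frakp_n \subset \mathbf{I}$, each lying over a finite-order character of $\Lambda$. My plan is to exploit the local-at-$p$ structure of the $\Lambda$-adic Galois representation $\rho_{\mathbf{I}}: G_{\Q} \to \GL_2(\mathbf{I})$ (available after passing to a normalization if necessary). By $p$-ordinarity, its restriction to a decomposition group at $p$ is conjugate to
\[
\rho_{\mathbf{I}}|_{G_{\Q_p}} \sim \begin{pmatrix} \psi_1 & c \\ 0 & \psi_2 \end{pmatrix},
\]
with $\psi_2$ unramified, sending geometric Frobenius to $U_p \in \mathbf{I}$, and with $c$ an extension class living in a finitely generated $\mathbf{I}$-module.

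The key step is to show that each classical weight one specialization $f_n$ forces $c \equiv 0 \pmod{\frakp_n}$. Indeed, such an $f_n$ arises by $p$-stabilization from a classical weight one eigenform of level prime to $p$, and the attached Galois representation is therefore unramified, hence split, at $p$. The crucial feature is that the only classical input used is the prime-to-$p$ ramification behavior at $p$ of the associated representation — a direct consequence of the form having prime-to-$p$ level — and \emph{not} the Ramanujan bound or the finiteness of projective image. Passing from pointwise vanishings to a global statement is then a matter of support and Nakayama: the $\frakp_n$ are infinitely many distinct height-one primes of the Noetherian integral domain $\mathbf{I}$ of Krull dimension two, so their intersection is zero, and $c$ must vanish identically (possibly after a suitable localization or after inverting $p$).

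The remaining task is to deduce that the global splitting $\rho_{\mathbf{I}}|_{G_{\Q_p}} \cong \psi_1 \oplus \psi_2$ forces CM, and this is where I expect the main obstacle to lie: the implication is a form of Greenberg's local-splitting conjecture for ordinary Hida families, which is open in general. To sidestep that conjecture, I would use the \emph{source} of the splitting — the infinite collection of classical weight one points, and the specializations of $\psi_1$ and $\psi_2$ they provide — to construct directly a quadratic character $\chi$ with which $\rho_{\mathbf{I}}$ admits a self-twist $\rho_{\mathbf{I}} \otimes \chi \cong \rho_{\mathbf{I}}$, exhibiting a dihedral and hence CM structure on $\rho_{\mathbf{I}}$. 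The construction of $\chi$ is precisely the step whose analog would need to be reproduced in the partial weight one Hilbert setting.
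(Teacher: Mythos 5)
Your sufficiency direction is fine, but the necessity argument has two genuine gaps, and the route you take is essentially the original Ghate--Vatsal one that this paper is explicitly designed to avoid. The first gap is the claim that each classical weight one specialization $f_n$ "arises by $p$-stabilization from a classical weight one eigenform of level prime to $p$," so that its Galois representation is unramified, hence split, at $p$, "using only the prime-to-$p$ level" and not finite image. In a Hida family there are only finitely many specializations in any fixed weight and nebentypus (the fiber over each $P_{1,\zeta}$ is finite), so an infinite collection of weight one points necessarily involves nebentypus characters of unboundedly large $p$-power conductor; all but finitely many of your $f_n$ therefore have $p$ dividing their level essentially, and their representations are \emph{ramified} at $p$ — they are not $p$-stabilizations of prime-to-$p$ level forms. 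What is true is that $\rho_{f_n,p}|_{G_{\Q_p}}$ splits as a sum of two characters (one unramified), but the known proof of that splitting goes through the Deligne--Serre theorem that the representation is Artin with finite image — precisely the input you claim not to use, and precisely what is unavailable in the partial weight one Hilbert setting that motivates this paper. (The passage from $c \equiv 0 \bmod \frakp_n$ at infinitely many height-one primes to $c=0$ also needs more care than "intersection is zero," since $c$ is an extension class in a module rather than a ring element, but that is a secondary issue.) The second and larger gap is the final step: as you yourself note, "split at $p$ implies CM" is a form of Greenberg's conjecture and is open; your proposed sidestep — constructing a quadratic self-twist character $\chi$ directly from the weight one points — is exactly where the mathematical content would have to be, and it is not carried out. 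Ghate--Vatsal produce that dihedral structure by classifying the (finite) projective images of the weight one representations, again relying on finite image.

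For contrast, the paper's proof runs along completely different lines and never touches the local-at-$p$ splitting. It uses the slope bound of \cref{thm:slope_bound} to show that \emph{all} Galois conjugates of a $p$-ordinary weight one form are ordinary, giving the uniform Hecke field bound of \cref{lem:weight_one_hecke_field_bound}; it then builds the direct-sum representation $r$ of \cref{sec:construction} over several components so that the characteristic polynomials of Frobenius specialize to cyclotomic integers of bounded house at the weight one points (using only the elementary Archimedean bound \cref{thm:trivial_coefficient_bound}, not Ramanujan); Loxton's theorem (\cref{thm:loxton}) and the rigidity theorem \cref{thm:many_roi_rigidity} then pin down these coefficients as linear combinations of exponential power series, which propagates the Hecke field bounds into weights $k \geq 2$, where Hida's criterion \cref{thm:hida_cm_iff_hecke_field} (where Ramanujan is legitimately available) yields CM. If you want to salvage your approach you would need an independent, non-Artin-theoretic proof of the local splitting at $p$ for the $p$-ramified weight one specializations, plus an actual construction of the self-twist $\chi$; as written, both are missing.
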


In this article we give a new proof of this result.
The method of Ghate--Vatsal relies crucially on the fact that the Galois representations associated to classical eigenforms of weight one have finite image.
As we are interested in generalizing this result to the situation of Hilbert modular forms of partial weight one, where the associated Galois representations have infinite image and the Ramanujan conjecture is still open, our new proof of this result in the elliptic case avoids both of these known facts for classical eigenforms of weight one.

\subsection{Overview of the strategy}\label{sec:strategy}

Our strategy is to utilize a result of Hida from \cite{hida_hilbert_i} which characterizes whether or not a $p$-ordinary family has complex multiplication (hereafter abbreviated as ``CM'') by the arithmetic complexity of the Hecke fields of the classical forms it interpolates.
A version of this result is restated in this article as \cref{thm:hida_cm_iff_hecke_field} and its proof is sketched in \cref{sec:bounded}.

In order to apply this characterization of CM families, we rely on a crucial fact: Hecke fields of $p$-ordinary classical weight one eigenforms cannot be too complicated.
This is shown in \cref{lem:weight_one_hecke_field_bound} where we prove that for a $p$-ordinary classical weight one eigenform $f \in S_{1}(N, \epsilon; \Q_{p}(\epsilon))$ we have that
\[
[\Q(f):\Q(\epsilon)] \leq \rank_{\Lambda}(\mathbf{H}^{\ord}(N; \Z_{p})) < \infty.
\]
This is a consequence of the fact that any classical weight one eigenform either has $U_{p}$ eigenvalue equal to $0$ or is $p$-ordinary.

If we were willing to apply Hida's characterization of CM families directly in weight one this bound on Hecke fields would suffice to prove that a $p$-ordinary family with infinitely many classical weight one specializations has CM.
However, Hida's characterization crucially uses that the Frobenius eigenvalues in the Galois representations attached to elliptic modular forms are Weil numbers, which is a consequence of the Ramanujan conjecture.
Since we seek a proof which avoids using the Ramanujan conjecture in weight one, we cannot directly apply Hida's characterization in weight one.

Instead, we propagate information about the boundedness of Hecke fields along the family from weight one into higher weights, where we do allow ourselves to make use of the Ramanujan conjecture and hence apply Hida's characterization.
The main idea is to analyze the structure of the ``algebraic power series'' (elements of finite extensions of $\Lambda$) which define Frobenius eigenvalues across the family.
The prototypical example of the type of rigidity result we employ is the following, which is also used by Hida in establishing the characterization of CM families: a power series $F(T) \in \Lambda$ for which $F(\zeta - 1)$ is a power of $\zeta$ for infinitely many $\zeta \in \mu_{p^{\infty}}$ must be of the form $\zeta'(1 + T)^{e} = \zeta' \sum_{n=0}^{\infty} \binom{n}{e} T^{n}$, where $\zeta' \in \mu_{p^{\infty}}$ and $e \in \Z_{p}$.
Our situation requires controlling algebraic power series with many specializations which are a sum of a bounded number of roots of unity rather than just a single root of unity, to account for the fact that Hecke fields of our $p$-ordinary weight one forms are uniformly bounded over $\Q(\mu_{p^{\infty}})$, but may not necessarily be cyclotomic themselves.
Once these algebraic power series have been controlled using the boundedness of Hecke fields in weight one, we use our exact knowledge of the shape of these power series to establish bounds on Hecke fields in higher weight, from whence we may apply Hida's characterization.

\subsection{Outline}\label{sec:outline}

In \cref{sec:modular_forms} we recall the key facts about modular forms which are used in our arguments.
We discuss Galois conjugation of modular forms and its relation to Hecke fields.
Our arguments rely on two types of bounds on the Hecke eigenvalues of modular forms: Archimedean bounds (as embodied by bounds coming from the holomorphicity of the $L$-series of $f$) and non-Archimedean bounds (as embodied by the slope bounds $0 \leq \ord_{p}(a_{p}(f)) \leq k-1$ for classical forms of weight $k$).

\Cref{sec:hida} covers the statements we need from the theory of ordinary families of $p$-adic modular forms.
We cover what we need in the elliptic case, paying particular attention to the inclusion of forms of weight one into ordinary families, as the literature often works only with forms of weight $k \geq 2$.

\Cref{sec:rigidity} is where new results start appearing.
This chapter focuses on rigidity principles for $p$-adic power series and integral extensions of power series rings.
After recalling in \cref{sec:weierstrass_and_newton} some facts about Weierstrass preparation and Newton polygons to set the stage for how we approach thinking about elements of integral extension of power series rings, we prove our main rigidity result in \cref{sec:bounded_sum_rigidity}.
We conclude this chapter with tools for studying the fields of definition of the values of algebraic power series of the form determined by our rigidity results.

\Cref{sec:construction} works throughout with a component $\mathbf{I}$ of the ordinary $\Lambda$-adic Hecke algebra, which is assumed to admit infinitely many classical weight one specializations.
This chapter covers the construction of a high-dimensional Galois representation whose characteristic polynomials of Frobenius provide a way to propagate information about Hecke fields from low weight into regular weight.
\Cref{sec:selecting_components} carries out the actual construction, which is a careful selection of components of the Hecke algebra by an extended pigeonhole principle argument; the Galois representation we want is the direct sum of the representations attached to a well-chosen set of such components.
In \cref{sec:big_rep_rigidity} we show that the characteristic polynomials of Frobenius of our high-dimensional Galois representation satisfy the conditions necessary to apply the rigidity principles of \cref{sec:rigidity}.

\Cref{sec:bounded} sketches a proof of Hida's characterization of CM families in \cref{sec:hida_bounded} and then assembles the ingredients from \cref{sec:construction} in order to apply Hida's theorem to families containing infinitely many classical forms of low weight, proving our main theorem.

We end in \cref{sec:hilbert} with a brief section discussing how the strategy in this article might be applied to $p$-adic families containing Hilbert modular forms of partial weight one.

\subsection{Notation}\label{sec:notation}
%
%
Throughout the article we work with a fixed odd prime $p$.
We fix an algebraic closure $\Qbar_{p}$ of $\Q_{p}$, and completion $\C_{p}$ of $\Qbar_{p}$.
We fix the $p$-adic valuation $\ord_{p}$ on $\C_{p}$, normalized so that $\ord_{p}(p) = 1$.

%
For any field $F$ we let $G_{F}$ denote the absolute Galois group of $F$, $\Gal(\overline{F}/F)$.
We will only deal with absolute Galois groups of finite extensions of $\Q$ and $\Q_{\ell}$ for primes $\ell$.

\subsection{Acknowledgements}\label{sec:acknowledgements}

This article contains the results of the author's University of Chicago Ph.D. thesis.
First and foremost, thanks go to the author's advisor Frank Calegari for suggesting this problem, providing a great deal of mathematical and professional wisdom during the author's time in Chicago, and for allowing the author to approach the problem at his own pace.
This project benefited from mathematical discussions with many people, especially (but not limited to!): Patrick Allen, George Boxer, Matthew Emerton, Haruzo Hida, Alex Horawa, and Gal Porat.
Thanks also to Patrick Allen for comments on a first version of this post-thesis article.

The author's time at the University of Chicago was improved immeasurably by many other graduate students.
Thanks to Mathilde and Olivier, for being such constant companions on my grad school journey; to my academic siblings Shiva, Joel, Noah for providing a great environment to learn number theory in; and to the crossword crew for the many joyful lunch hours hanging out and solving puzzles.

The author is incredibly grateful to the Canada/USA Mathcamp community for creating such a magical place, and welcoming the author into it over the past 3 summers.
The author's family has been incredibly supportive throughout the author's time in graduate school, and this journey would not have been possible without them.

Lastly the author wishes to acknowledge the support he received during his graduate studies from Canada's National Sciences and Engineering Research Council.

\section{Modular forms}\label{sec:modular_forms}

In this section we collect some facts about elliptic modular eigenforms, their Hecke fields, and bounds on their Hecke eigenvalues.
Everything in this section is either already known or easily deduced from known results, we are simply collecting the key facts for our arguments to have them in one place.

\subsection{Galois conjugates}\label{sec:galois_conjugates}

Acting on the space $S_{k}(N, \epsilon; \C)$ of cuspidal modular forms of weight $k$, level $\Gamma_{0}(N)$, Nebentypus character $\epsilon$, with complex coefficients is a commutative algebra $H_{k}(N, \epsilon; \C)$, called the Hecke algebra.
The generators of this algebra and their action on $q$-expansions are:
\begin{itemize}
\item for each prime $\ell \nmid N$ a Hecke operator $T_{\ell}$ acting by
\[
T_{\ell}\left(\sum_{n=1}^{\infty} a_{n}q^{n} \right) = \sum_{n=1}^{\infty} a_{n \ell} q^{n} + \epsilon(\ell) \ell^{k-1} \sum_{n=1}^{\infty} a_{n}q^{n \ell},
\]
\item for each prime $\ell \nmid N$ a diamond operator $S_{\ell}$ acting by 
\[
S_{\ell}\left(\sum_{n=1}^{\infty} a_{n}q^{n} \right) = \epsilon(\ell) \sum_{n=1}^{\infty} a_{n}q^{n},
\]
\item for each prime $\ell | N$ an Atkin-Lehner operator $U_{\ell}$ acting by
\[
U_{\ell}\left(\sum_{n=1}^{\infty} a_{n}q^{n} \right) = \sum_{n=1}^{\infty} a_{n \ell} q^{n}.
\]
\end{itemize}

Throughout this article we will mostly work with Hecke algebras rather than directly with spaces of forms.
Fundamentally the two are equivalent thanks to the following proposition.

\begin{proposition}\label{prop:hecke_algebra_pairing}
The pairing
\begin{align*}
\begin{array}{rclll}
H_{k}(N, \epsilon; \C) & \times & S_{k}(N, \epsilon; \C) 	& \to & \C \\
(T&,&f)		& \mapsto & a_{1}(T(f))
\end{array}
\end{align*}
is a perfect pairing of complex vector spaces.
\end{proposition}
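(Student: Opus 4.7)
The plan is to prove perfectness by establishing non-degeneracy in each variable separately, using the classical identity $a_{1}(T_{n} f) = a_{n}(f)$. Here $T_{n}$ denotes the $n$-th Hecke operator built from the listed generators by multiplicativity on coprime indices, by the recursion $T_{\ell^{r+1}} = T_{\ell} T_{\ell^{r}} - \epsilon(\ell) \ell^{k-1} T_{\ell^{r-1}}$ for primes $\ell \nmid N$, and by $U_{\ell^{r}} = U_{\ell}^{r}$ for $\ell \mid N$; in particular each $T_{n}$ lies in $H_{k}(N, \epsilon; \C)$.

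First I would verify the identity $a_{1}(T_{n} f) = a_{n}(f)$ directly on $q$-expansions. The base cases $a_{1}(T_{\ell} f) = a_{\ell}(f)$ and $a_{1}(U_{\ell} f) = a_{\ell}(f)$ are immediate from the formulas above, since the second summand in the definition of $T_{\ell}$ contributes only to coefficients of $q^{m}$ with $\ell \mid m$ and $m \geq \ell$. The multiplicativity and the recursion then yield the general formula by induction on the prime factorization of $n$. This is the one piece of genuine computation in the argument and is the only place where the precise shape of the Hecke operators enters.

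Next I would deduce non-degeneracy on the $S_{k}$ side: if $a_{1}(T f) = 0$ for every $T$, then specializing to $T = T_{n}$ shows every Fourier coefficient of $f$ vanishes, so $f = 0$. The Hecke algebra side follows by a symmetric trick: given $T \in H_{k}(N, \epsilon; \C)$ with $a_{1}(T f') = 0$ for all $f' \in S_{k}(N, \epsilon; \C)$, apply this with $f' = T_{n} f$ and use commutativity of the Hecke algebra to rewrite
\[
0 = a_{1}(T\, T_{n}\, f) = a_{1}(T_{n}\, T f) = a_{n}(T f).
\]
Thus every coefficient of $T f$ is zero, so $T f = 0$ for all $f$, and therefore $T$ is the zero endomorphism of $S_{k}(N, \epsilon; \C)$, hence the zero element of $H_{k}(N, \epsilon; \C)$.

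Finally, since $S_{k}(N, \epsilon; \C)$ is finite-dimensional, so is its Hecke algebra (as a subalgebra of $\End_{\C}(S_{k}(N, \epsilon; \C))$), and the two non-degeneracy statements give $\C$-linear injections from $H_{k}(N, \epsilon; \C)$ into the $\C$-dual of $S_{k}(N, \epsilon; \C)$ and vice versa. Comparing dimensions forces both to be isomorphisms, which is exactly the statement that the pairing is perfect. There is no real obstacle: the entire argument rests on the $q$-expansion identity established in the first step.
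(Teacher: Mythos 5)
Your argument is correct in substance and is the standard duality argument; the paper itself offers no proof of this proposition, treating it as a classical fact (it is the usual statement that the Hecke algebra and the space of cusp forms are dual under $(T,f)\mapsto a_{1}(Tf)$, as in Diamond--Shurman), so there is nothing in the paper to diverge from. Your reduction is exactly the expected one: the identity $a_{1}(T_{n}f)=a_{n}(f)$, nondegeneracy in each variable (using commutativity and the fact that $H_{k}(N,\epsilon;\C)$ sits inside $\End_{\C}(S_{k}(N,\epsilon;\C))$, so an operator killing every form is zero), and a dimension count to upgrade nondegeneracy to perfectness in the finite-dimensional setting. The one place where your write-up is slightly too quick is the induction establishing $a_{1}(T_{n}f)=a_{n}(f)$: the coprime-multiplicativity step gives $a_{1}(T_{mn}f)=a_{1}(T_{m}(T_{n}f))=a_{m}(T_{n}f)$, and similarly the prime-power recursion produces terms like $a_{\ell}(T_{\ell^{r}}f)$, so an induction purely on first coefficients does not close. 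You need the slightly stronger inductive statement, e.g.\ $a_{m}(T_{n}f)=a_{mn}(f)$ for $(m,n)=1$ (or the full classical formula $a_{m}(T_{n}f)=\sum_{d\mid(m,n),\,(d,N)=1}\epsilon(d)d^{k-1}a_{mn/d^{2}}(f)$), which is a routine computation from the displayed action of the generators on $q$-expansions. With that strengthening, or simply by citing the classical coefficient formula, your proof is complete.
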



Thanks to this perfect pairing, we have that $S_{k}(N, \epsilon; \C)$ admits a basis of (simultaneous) eigenvectors for the action of $H_{k}(N, \epsilon; \C)$.
If $f$ is such an eigenvector, we have that there is a homomorphism $\psi(f): H_{k}(N, \epsilon; \C) \to \C$ sending $T_{\ell}$ (resp. $S_{\ell}$ or $U_{\ell}$) to its eigenvalue on $f$.
If we scale the eigenvector $f$ so that $a_{1}(f) = 1$, we call $f$ a normalized eigenform, and we have that $T_{\ell}(f) = a_{\ell}(f)$, the $\ell$-th Fourier coefficient of $f$.

Moreover we can consider the $\Q$ or $\Z$-algebra generated by these same operators as a Hecke algebra $H_{k}(N, \epsilon; \Q)$ or $H_{k}(N, \epsilon; \Z)$.
Since the integral Hecke algebra is finitely generated over $\Z$ (it is a sub-algebra of the finitely generated $\End_{\Z}(S_{k}(N, \epsilon; \Z))$) each such homomorphism $\psi(f)$ must have image in the ring of integers of a finite extension of $\Q$.
With this structure in place we can define Galois conjugation on these spaces of modular forms.
Given a normalized eigenform $f \in S_{k}(N, \epsilon; \C)$ and an element $\sigma$ of the absolute Galois group $G_{\Q}$ we define the Galois conjugate $f^{\sigma}$ by letting $\psi_{f}: H_{k}(N, \epsilon; \Z) \to \C$ be the homomorphism corresponding to $f$ by the duality between Hecke algebras and modular forms, and letting $f^{\sigma}$ be the normalized eigenform corresponding to $\sigma \circ \psi_{f}: H_{k}(N, \epsilon^{\sigma}; \Z) \to \C$.
Note that this is well-defined as we can either choose an extension of $\sigma$ to all of $\C$ or observe that the image of $\psi_{f}$ lands in a number field.

\begin{remark}
We could define the Galois action on modular forms directly on $q$-expansions by acting with any field automorphism of $\C/\Q$, but with that definition is it entirely unclear why the Galois conjugate of a modular form should still be a modular form!
The given definition makes it clear that Galois conjugate of a Hecke eigenform is still an eigenform.
\end{remark}

See section 6.5 of \cite{diamond_shurman} for a more in depth discussion of Galois conjugates of eigenforms.

\subsection{Hecke fields}\label{sec:hecke_fields}

\begin{definition}
Given a normalized eigenform $f \in S_{k}(N, \epsilon; \C)$ we define its character field
\[
\Q(\epsilon) = \Q(\{\epsilon(x): x \in (\Z/N\Z)^{\times}\})
\]
and its Hecke field
\[
\Q(f) = \Q(\{a_{n}(f): n \in \N\}).
\]
\end{definition}

\begin{remark}
We make several remarks about character and Hecke fields of modular forms.
First off, the relation between Fourier coefficients
\[
a_{\ell^{2}}(f) = a_{\ell}(f)^{2} - \epsilon(\ell)\ell^{k-1} a_{\ell}(f)
\]
for almost all primes $\ell$ shows that $\Q(f) \supseteq \Q(\epsilon)$.

Second, since we know that the Fourier coefficient $a_{\ell}(f)$ is equal to the eigenvalue of the operator $T_{\ell}$ acting on $f$ (similarly $\epsilon(\ell)$ is the eigenvalue of $S_{\ell}$ acting on $f$) we have that $\Q(f)$ is the image of the homomorphism $H_{k}(N, \epsilon; \Q(\epsilon)) \to \C$ sending $T_{\ell}$ to $a_{\ell}(f)$.
Since the Hecke algebra is finitely integrally generated we have that $\Q(f)$ is a finitely generated extension of $\Q$.
Moreover the finiteness of the Hecke algebra shows that each $a_{\ell}(f)$ must be algebraic (integral even) and so we conclude that $\Q(f)$ is of finite degree over $\Q$.
\end{remark}

We record here the important principle that the degree of the Hecke field tells us the number of Galois conjugates of a form.

\begin{lemma}\label{lem:hecke_field_galois_conjugates}
Let $f \in S_{k}(N, \epsilon; \C)$ be a normalized eigenform.
Then the number of Galois conjugates of $f$ over $\Q$ is equal to the degree $[\Q(f):\Q]$ of the Hecke field of $f$ over $\Q$.
\end{lemma}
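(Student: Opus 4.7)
The plan is to translate the statement about Galois conjugates of eigenforms into a statement about the orbit-stabilizer theorem applied to the action of $G_{\Q}$ on the set of ring homomorphisms $H_{k}(N, \epsilon; \Z) \to \overline{\Q}$. The key point is that the correspondence between normalized eigenforms and such homomorphisms (via \cref{prop:hecke_algebra_pairing}) is $G_{\Q}$-equivariant by the very definition of $f^{\sigma}$ given in \cref{sec:galois_conjugates}.

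First I would observe that because $\psi_{f}: H_{k}(N, \epsilon; \Z) \to \overline{\Q}$ has image equal to the subring $\Z[\{a_{\ell}(f), \epsilon(\ell)\}]$, whose fraction field is $\Q(f)$, the composition $\sigma \circ \psi_{f}$ depends only on the restriction $\sigma|_{\Q(f)}$. In particular, two elements $\sigma, \tau \in G_{\Q}$ produce the same Galois conjugate $f^{\sigma} = f^{\tau}$ if and only if $\sigma \circ \psi_{f} = \tau \circ \psi_{f}$, which happens if and only if $\tau^{-1}\sigma$ fixes the image of $\psi_{f}$ pointwise. Since that image generates $\Q(f)$ as a field, this condition is equivalent to $\tau^{-1}\sigma \in G_{\Q(f)}$.

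Next I would apply orbit-stabilizer: the stabilizer of $f$ under the $G_{\Q}$-action on normalized eigenforms is $G_{\Q(f)}$, so the number of Galois conjugates of $f$ equals the index $[G_{\Q} : G_{\Q(f)}]$. By the standard dictionary between finite field extensions and open subgroups of $G_{\Q}$ (which does not require $\Q(f)/\Q$ to be Galois), this index equals $[\Q(f):\Q]$, which is finite by the finiteness discussion in the preceding remark.

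There is no real obstacle here; the only subtle point to get right is that one should not assume $\Q(f)/\Q$ is Galois, and instead work with the identification of $G_{\Q(f)}$ as the stabilizer of the embedding $\Q(f) \hookrightarrow \overline{\Q}$ determined by $\psi_{f}$. Alternatively, one can cleanly phrase the argument by noting that Galois conjugates of $f$ correspond bijectively to the $[\Q(f):\Q]$ distinct $\Q$-algebra embeddings $\Q(f) \hookrightarrow \overline{\Q}$, each of which composes with $\psi_{f}$ to give a distinct homomorphism $H_{k}(N, \epsilon; \Z) \to \overline{\Q}$ and hence a distinct normalized eigenform.
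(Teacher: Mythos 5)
Your proposal is correct and is essentially the paper's argument: both hinge on the observation that $f^{\sigma} = f^{\tau}$ exactly when $\sigma$ and $\tau$ agree on the Hecke eigenvalues generating $\Q(f)$, and then count $[\Q(f):\Q]$ via the resulting embeddings of $\Q(f)$ into $\overline{\Q}$. Your orbit--stabilizer packaging (identifying the stabilizer with $G_{\Q(f)}$ and noting no Galois hypothesis on $\Q(f)/\Q$ is needed) is just a slightly more formal phrasing of the paper's count of the $G_{\Q}$-orbit of the generating set $\{a_{n}(f)\}$.
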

\begin{proof}
This is more generally just a fact about algebraic field extensions.
The degree $[\Q(f):\Q]$ is equal to the size of the orbit of the generating set $\{a_{n}(f): n \in \N\}$ under the action of the absolute Galois group of $\Q$.
But we also have that the size of this orbit is the number of Galois conjugates of $f$ itself, since for $\sigma, \tau \in G_{\Q}$ we have that $\sigma(a_{n}(f)) = \tau(a_{n}(f))$ for all $n$ if and only if $f^{\sigma} = f^{\tau}$.
\end{proof}

\subsection{Modular forms with complex multiplication}\label{sec:cm_forms}


\begin{definition}
Let $E$ be an imaginary quadratic field.
We say that a modular form $f$ has complex multiplication (or CM for short) by $E$ if $a_{p}(f) = 0$ whenever $p$ is inert in the extension $E/\Q$.
We say that a modular form has CM if it has CM by some imaginary quadratic field.
\end{definition}

A good reference for basic facts about modular forms with complex multiplication is Sections 3 and 4 of \cite{ribet_nebentypus}.
Of interest to us is the fact that eigenforms with CM can be constructed using algebraic Hecke characters, which we recall here.
Let $E$ be an imaginary quadratic field with a chosen embedding $\sigma: E \to \C$, and let $\psi$ be an algebraic Hecke character of infinity-type $\sigma^{k-1}$ and conductor $\frakm$.
Given such a Hecke character one can construct a weight $k$ eigenform given by the series
\begin{align*}
g = \sum_{\fraka} \psi(\fraka) q^{\Norm^{E}_{\Q}(\fraka)}
\end{align*}
where the sum is over all integral ideals $\fraka$ of $\calO_{E}$ which have $(\fraka, \frakm) = 1$.
The modular form $g$ is called the $\theta$-series attached to $\psi$.
The eigenform $g$ thus constructed has level $DM$ where $D$ is the discriminant of $E/\Q$ and $M = \Norm^{E}_{\Q}(\frakm)$, and character $\epsilon = \chi \eta$ where $\chi$ is the quadratic Dirichlet character attached to $E$ and $\eta$ is the ``finite order'' part of $\psi$ on the integers, given by $\eta(n) = \psi((n))/\sigma(n)^{k-1}$ for $n \in \Z$.
Note that it is immediate from this definition that $g$ has CM by $E$; if $p$ is inert in $E/\Q$ then there are no ideals of $\calO_{E}$ having norm $p$, so $a_{p}(g) = \sum_{\fraka, \Norm^{E}_{\Q}(\fraka) = p} \psi(\fraka) = 0$.
This construction is studied in Section 3 of \cite{ribet_nebentypus}, and Section 4 of \cite{ribet_nebentypus} deals with basic facts about Galois representations attached to CM eigenforms.

Of fundamental importance for our method is the characterization of CM families by the arithmetic complexity of their Hecke fields.
This characterization due to Hida is recalled in \cref{sec:bounded}.
The key philosophy is that Hecke fields attached to CM eigenforms are much simpler than those attached to non-CM eigenforms.
We begin this study here with a description of the Hecke field of a CM eigenform.

\begin{lemma}\label{lem:cm_hecke_fields}
Suppose that $f \in S_{k}(N, \epsilon; \C)$ be a normalized eigenform with CM by the imaginary quadratic field $E$.
Suppose that $f$ is realized as a theta series by an algebraic Hecke character $\psi$ of conductor $\frakm$, and let $M = \Norm^{E}_{\Q}(\frakm)$.
Let $h$ be the class number of $E$.
Then there are elements $a_{1}, \ldots, a_{h}$ in $E$ such that
\[
\Q(f) \subseteq E(\mu_{h \cdot M}, a_{1}^{1/h}, \ldots, a_{h}^{1/h}).
\]
In particular, the degree of the Hecke field over $\Q$ is bounded solely in terms of the CM field $E$ and the conductor $\frakm$ of the character $\psi$ (the degree is at most $2h^{3}M$ over $\Q$).
\end{lemma}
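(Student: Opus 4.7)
The plan is to use the theta series realization $f = \sum_{\fraka} \psi(\fraka) q^{\Norm^{E}_{\Q}(\fraka)}$ together with the fact that every ideal of $\calO_{E}$ becomes principal after raising to the $h$-th power. Each Fourier coefficient $a_{n}(f)$ is a sum of values $\psi(\fraka)$ for integral ideals $\fraka$ coprime to $\frakm$ with $\Norm^{E}_{\Q}(\fraka) = n$, so $\Q(f)$ is contained in the field generated over $\Q$ by all such values $\psi(\fraka)$. It therefore suffices to control these values.

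Choose representatives $\fraka_{1}, \ldots, \fraka_{h}$ of the ideal classes of $\calO_{E}$, all picked coprime to $\frakm$. Any integral ideal $\fraka$ coprime to $\frakm$ can be written as $(\beta)\fraka_{i}$ for some $i$ and some $\beta \in E^{\times}$ with $(\beta, \frakm)=1$; multiplicativity of $\psi$ gives
\[
\psi(\fraka) = \psi((\beta))\, \psi(\fraka_{i}) = \eta(\beta)\, \sigma(\beta)^{k-1}\, \psi(\fraka_{i}),
\]
where I am extending the finite-order character of the paper to ideals of $\calO_{E}$ coprime to $\frakm$ by $\eta(\alpha) = \psi((\alpha))/\sigma(\alpha)^{k-1}$. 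This $\eta$ factors through $(\calO_{E}/\frakm)^{\times}$, whose order divides $M$, so its values lie in $\mu_{M}$. Identifying $E$ with $\sigma(E) \subset \C$, we have $\sigma(\beta)^{k-1} \in E$, so it suffices to prove $\psi(\fraka_{i}) \in E(\mu_{hM}, a_{i}^{1/h})$ for a suitable $a_{i} \in E$.

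For this, the principalization $\fraka_{i}^{h} = (\alpha_{i})$ with $\alpha_{i} \in \calO_{E}$ coprime to $\frakm$ yields
\[
\psi(\fraka_{i})^{h} = \psi((\alpha_{i})) = \eta(\alpha_{i})\, \sigma(\alpha_{i})^{k-1}.
\]
Set $a_{i} := \sigma(\alpha_{i})^{k-1} \in E$; then $\psi(\fraka_{i})^{h} = \eta(\alpha_{i}) a_{i}$ with $\eta(\alpha_{i}) \in \mu_{M}$, so $\psi(\fraka_{i}) = \zeta_{i} \cdot a_{i}^{1/h}$ for some $\zeta_{i} \in \mu_{hM}$ and some choice of $h$-th root of $a_{i}$. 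Combining the previous step with this one gives the desired containment $\Q(f) \subseteq E(\mu_{hM}, a_{1}^{1/h}, \ldots, a_{h}^{1/h})$. The degree bound then follows from $[E:\Q] = 2$, $[\Q(\mu_{hM}):\Q] \leq hM$, and the fact that adjoining each Kummer radical $a_{i}^{1/h}$ multiplies the degree by at most $h$; a slightly careful accounting of the Kummer group generated by the $a_{i}$ modulo $h$-th powers keeps the exponent in $h$ bounded. The main obstacle in the argument is purely bookkeeping: choosing ideal class representatives coprime to $\frakm$ so that every application of $\psi$ and $\eta$ remains legitimate, and keeping the identification of $E$ with $\sigma(E) \subset \C$ consistent so that the elements $a_{i}$ are genuinely produced inside $E$ rather than merely in some extension of it.
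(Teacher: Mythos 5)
Your proposal is correct and follows essentially the same route as the paper's proof: reduce $\Q(f)$ to the values $\psi(\fraka)$, write each ideal as a principal ideal times a class representative, use that $\psi$ on principal ideals is the $(k-1)$-st power of a generator times a root of unity controlled by the conductor, and extract the $a_{i}$ from the principalizations $\fraka_{i}^{h}$. Your choice $a_{i} = \sigma(\alpha_{i})^{k-1}$ is in fact a slightly cleaner way to guarantee $a_{i} \in E$ than the paper's direct $a_{i} = \psi(\fraka_{i}^{h})$, and your treatment of the parenthetical degree bound is no less precise than the paper's own.
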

\begin{proof}
We know that our eigenform $f$ has $q$-expansion given by
\[
f = \sum_{\fraka} \psi(\fraka) q^{\Norm^{E}_{\Q}(\fraka)}
\]
where the sum is over all integral ideals $\fraka$ of $\calO_{E}$ which have $(\fraka, \frakm) = 1$.
We certainly have that $\Q(f) \subseteq \Q(\{\psi(\fraka)\})$, so it suffices to understand the field of definition of $\psi$.

If $(a)$ is a principal ideal with generator $a \equiv 1 \mod{\frakm}$, we have that $\psi((a)) = a^{k - 1}$.
If the generator $a$ is not necessarily $\equiv 1 \mod{\frakm}$, then we know that $a^{hM} \equiv 1 \mod{\frakm}$ and hence $\frac{\psi((a))}{a^{k-1}} \in \mu_{hM}$.

Let $\fraka_{1}, \ldots, \fraka_{h}$ be a complete set of representatives of the ideal class group of $E$.
For each $i$ we have that $\psi(\fraka_{i}^{h}) \in E$ since $\fraka_{i}^{h}$ is principal and $\psi$ necessarily sends principal ideals to elements of $E$ (using that the infinity-type of $\psi$ is $\sigma^{k-1}$ for $\sigma:E \to \C$).
Let $a_{i}$ be an element of $E$ such that $\psi(\fraka_{i}^{h}) = a_{i}$.
We know that every ideal $\fraka$ is equal to a principal ideal times one of our representatives $\fraka_{i}$, hence we have that $\psi(\fraka)$ must be a root of unity of order $hM$ times $a_{i}^{1/h}$.
\end{proof}

This idea of the uniformity of Hecke fields of CM forms will be studied further in \cref{sec:hida_cm}, where we show that there is a uniform description of the Hecke fields of all forms in an ordinary family which has CM.

\subsection{Archimedean bounds}\label{sec:arch_bounds}

In this section we state bounds on the (Archimedean) absolute value of Hecke eigenvalues of modular forms.

\begin{theorem}[The Ramanujan conjecture, due to Deligne \cite{deligne_weil_i}]\label{thm:ramanujan}
Let $f \in S_{k}(N, \epsilon; \C)$ be a normalized eigenform.
Then for all primes $\ell$ we have that
\[
|a_{\ell}(f)|_{\C} \leq 2 \ell^{\frac{k-1}{2}}.
\]
\end{theorem}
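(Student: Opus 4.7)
The plan is to realize $f$ inside the $\ell$-adic \'etale cohomology of a Kuga--Sato variety and apply Deligne's proof of the Weil conjectures to bound the absolute values of the Frobenius eigenvalues acting on that cohomology. The bridge connecting these geometric Frobenius eigenvalues to the Fourier coefficient $a_{\ell}(f)$ is the Eichler--Shimura--Deligne relation, which identifies the Hecke operator $T_{\ell}$ with a trace of Frobenius on the special fibre.

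First I would attach to $f$, for each rational prime $p$ and each prime $\lambda \mid p$ of the ring of integers of $\Q(f)$, a continuous two-dimensional Galois representation $\rho_{f,\lambda}\colon G_{\Q} \to \GL_2(\Q(f)_{\lambda})$, unramified outside $Np$, whose characteristic polynomial of $\Frob_{\ell}$ at a prime $\ell \nmid Np$ is $X^{2} - a_{\ell}(f) X + \epsilon(\ell)\ell^{k-1}$. For $k = 2$ one finds $\rho_{f,\lambda}$ inside the $\lambda$-adic Tate module of the Jacobian of $X_{1}(N)$, as originally done by Eichler and Shimura. For $k \geq 3$ one instead extracts $\rho_{f,\lambda}$ from the $\lambda$-adic \'etale cohomology in degree $k-1$ of the Kuga--Sato variety obtained by desingularizing the $(k-2)$-fold self-fibre product of the universal generalized elliptic curve over $X_{1}(N)$. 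The identification of Hecke eigenvalues with Frobenius traces comes from the Eichler--Shimura congruence, which expresses $T_{\ell}$ modulo $\ell$ as $\Frob_{\ell} + \ell^{k-1} \langle \ell \rangle \Frob_{\ell}^{-1}$ on the special fibre.

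Next I would invoke Deligne's proof of the Weil conjectures for this Kuga--Sato variety. This implies that every eigenvalue of $\Frob_{\ell}$ on the $(k-1)$-st \'etale cohomology of a smooth projective variety of dimension $k-1$ over $\F_{\ell}$ is an algebraic number all of whose complex embeddings have absolute value exactly $\ell^{(k-1)/2}$. Combining this with the Eichler--Shimura--Deligne relation, the two roots $\alpha,\beta$ of $X^{2} - a_{\ell}(f)X + \epsilon(\ell)\ell^{k-1}$ satisfy $|\alpha|_{\C} = |\beta|_{\C} = \ell^{(k-1)/2}$, and therefore $|a_{\ell}(f)|_{\C} = |\alpha + \beta|_{\C} \leq 2 \ell^{(k-1)/2}$, as required.

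The hard part, imported here as a black box, is of course Deligne's proof of the Weil conjectures --- a result of enormous depth whose own argument I would not attempt to reproduce. A secondary but still nontrivial subtlety, treated in full in \cite{deligne_weil_i}, is isolating the correct piece of the \'etale cohomology of the Kuga--Sato variety on which $f$ acts, so that the pure weight $k-1$ bound genuinely applies; this requires care at the cusps, a suitable smooth compactification, and verification that the Hecke-cut-out piece has the expected multiplicity and weight.
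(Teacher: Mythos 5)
The paper does not actually prove this statement: it is quoted as Deligne's theorem with a citation to \cite{deligne_weil_i}, and the Kuga--Sato/Eichler--Shimura/Weil-conjectures argument you outline is precisely the content of that cited work, so your sketch matches the intended source (with the Weil conjectures legitimately treated as the imported black box). The only minor mismatches are that the construction you describe gives the bound at primes $\ell \nmid Np$ and for weights $k \geq 2$, whereas the statement as written says ``for all primes $\ell$'' (bad primes require newform/local considerations) and the weight one case is due to Deligne--Serre, as the paper itself notes immediately after the theorem.
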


\begin{remark}
Let $\rho_{f, p}$ be the $p$-adic Galois representation attached to $f$.
Let $\alpha_{\ell}, \beta_{\ell}$ be the eigenvalues of $\rho_{f, p}(\Frob_{\ell})$; these are the roots of $x^{2} - a_{\ell}(f)x + \epsilon(\ell)\ell^{k-1}$.
The bound of \cref{thm:ramanujan} then gives that $\alpha_{\ell}, \beta_{\ell}$ are $\ell$-\emph{Weil numbers}, in other words their complex absolute values are exactly $p^{\frac{k-1}{2}}$.
This fact is key to the characterization of CM ordinary families due to Hida that we employ in \cref{sec:bounded}.
\end{remark}

\Cref{thm:ramanujan} also holds in weight one, where it is due to Deligne--Serre in \cite{deligne_serre}.
However, we wish to avoid using it in low weight in order to provide a proof technique which has the possibility of applying to the case of Hilbert modular forms of partial weight one.
To get around this we will only use a weaker bound in the case $k = 1$, the analog of which is comparatively easy to establish for all forms.
Though this weaker bound can doubtless be extracted from the literature, we provide a proof here.
We note that all our method requires is an upper bound on $|a_{\ell}(f)|_{\C}$ which is independent of $f$ (but may depend on the weight $k$), so the exact exponent $\frac{k}{2} + 1$ that appears in our bound is unimportant.

\begin{theorem}\label{thm:trivial_coefficient_bound}
Let $f \in S_{k}(N, \epsilon; \C)$ be a normalized eigenform.
Then for all primes $\ell \nmid N$ we have that
\[
|a_{\ell}(f)|_{\C} \leq 2 \ell^{\frac{k}{2}+1}.
\]
\end{theorem}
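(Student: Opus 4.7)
My plan is to combine Hecke's classical estimate on Fourier coefficients of cusp forms with the Euler product of the normalized eigenform $f$. The key point is that the Euler factorization converts the (form-dependent) Hecke bound into a uniform bound on the Frobenius eigenvalues $\alpha_{\ell}, \beta_{\ell}$, from which the desired estimate on their sum $a_{\ell}(f)$ is immediate.

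The first input is Hecke's trivial estimate: for any cusp form $f$ of weight $k$ there is a constant $C_{f}$ with $|a_{n}(f)|_{\C} \leq C_{f} n^{k/2}$ for all $n \geq 1$. This follows by noting that $y^{k/2}|f(x+iy)|$ is $\Gamma_{0}(N)$-invariant and vanishes at the cusps, hence bounded on the upper half-plane, and then recovering Fourier coefficients via the integral $a_{n}(f) = \int_{0}^{1} f(x+iy) e^{-2\pi i n(x+iy)}\,dx$ specialized at $y = 1/n$. In particular the Dirichlet series $L(s, f) = \sum_{n \geq 1} a_{n}(f) n^{-s}$ converges absolutely in the half-plane $\Realpart(s) > k/2 + 1$.

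For the second step, factor the local polynomial at $\ell \nmid N$ as $X^{2} - a_{\ell}(f) X + \epsilon(\ell) \ell^{k-1} = (X - \alpha_{\ell})(X - \beta_{\ell})$. Since $f$ is a normalized eigenform, the coefficients $a_{n}(f)$ are multiplicative, and the Euler factor at $\ell$ identifies the local generating function
\[
\sum_{j=0}^{\infty} a_{\ell^{j}}(f) T^{j} = \frac{1}{(1 - \alpha_{\ell} T)(1 - \beta_{\ell} T)},
\]
whose radius of convergence in $T$ equals $1/\max(|\alpha_{\ell}|_{\C}, |\beta_{\ell}|_{\C})$. On the other hand, the nonnegative terms $|a_{\ell^{j}}(f)|_{\C}\, \ell^{-js}$ form a sub-series of $\sum_{n} |a_{n}(f)|_{\C}\, n^{-s}$, which is finite for $\Realpart(s) > k/2 + 1$, so the same radius of convergence is at least $\ell^{-(k/2+1)}$. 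Comparing yields $\max(|\alpha_{\ell}|_{\C}, |\beta_{\ell}|_{\C}) \leq \ell^{k/2+1}$, and the triangle inequality then gives $|a_{\ell}(f)|_{\C} = |\alpha_{\ell} + \beta_{\ell}|_{\C} \leq 2\ell^{k/2+1}$, as desired.

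The main subtlety is the passage from the form-dependent Hecke estimate to the uniform bound; this is exactly what the Euler factorization accomplishes, since the local radius of convergence is an invariant of $f$ at $\ell$ that is controlled purely by the abscissa of absolute convergence of $L(s, f)$, which depends only on the weight $k$ and not on the particular eigenform.
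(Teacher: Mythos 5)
Your proof is correct and takes essentially the same route as the paper: Hecke's trivial estimate gives absolute convergence of $L(s,f)$ for $\Realpart(s) > \frac{k}{2}+1$, and the Euler factor at $\ell$ converts this form-dependent bound into a uniform bound at $\ell$. The only cosmetic difference is that you bound the Frobenius eigenvalues $\alpha_{\ell}, \beta_{\ell}$ via the radius of convergence of the local factor $\bigl((1-\alpha_{\ell}T)(1-\beta_{\ell}T)\bigr)^{-1}$, whereas the paper argues with the geometric series in $a_{\ell}\ell^{-s} - \epsilon(\ell)\ell^{k-1}\ell^{-2s}$ and lets $\Realpart(s) \to \frac{k}{2}+1$; your justification of this step is, if anything, a bit cleaner.
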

\begin{proof}
The proof is a combination of two facts.
First, for any cusp form $f(q) = \sum_{n=1}^{\infty} a_{n}q^{n}$ of weight $k$ we know that there is some constant $C$ such that $|a_{n}|_{\C} \leq C n^{\frac{k}{2}}$ and so the $L$-series
\[
L(s, f) = \sum_{n=1}^{\infty} \frac{a_{n}}{n^{s}}
\]
associated to $f$ converges absolutely in the right half-plane $\Realpart(s) > \frac{k}{2} + 1$.
This bound on coefficients does not suffice for our purposes, as the constant $C$ may depend on the form $f$.
The second fact that we need is that since $f$ is a normalized eigenform its $L$-series admits an Euler product expansion,
\[
L(s, f) = \prod_{p} \frac{1}{1 - a_{p}p^{-s} + \epsilon(p)p^{k-1}p^{-2s}}.
\]
Both of these facts can be found in section 5.9 of \cite{diamond_shurman}.

Consider the series $L_{p}(s, f) = \sum_{r=0}^{\infty} \frac{a_{p^{r}}}{p^{-rs}}$.
On one hand we know that this series converges absolutely on the right half-plane $\Realpart(s) > \frac{k}{2} + 1$, since it is just a sum over fewer terms of the series $L(s, f)$.
We also know that this is equal to the geometric series $\frac{1}{1 - a_{p}p^{-s} + \epsilon(p)p^{k-1}p^{-2s}}$ after some rearranging of terms, and a geometric series $\frac{1}{1-x}$ converges exactly when $|x| < 1$.
So we get that
\[
|a_{p}p^{-s} - \epsilon(p)p^{k-1}p^{-2s}|_{\C} < 1
\]
whenever $\Realpart(s) > \frac{k}{2} + 1$.
Taking a limit as $\Realpart(s) \to \frac{k}{2} + 1$ and juggling the inequality around yields the desired bound
\[
|a_{p}|_{\C} \leq 2 p^{\frac{k}{2} + 1}.
\]
\end{proof}

\subsection{Non-Archimedean bounds}\label{sec:non-arch_bounds}
In this section we discuss non-Archimedean bounds on the Hecke eigenvalues of modular forms.
The main result is a classical bound on the $U_{p}$ eigenvalue of classical forms: if $f$ has weight $k$ and its $U_{p}$ eigenvalue is non-zero, then that eigenvalue has valuation bounded between $0$ and $k-1$.
This valuation is often referred to as the slope of $f$.
This bound appears throughout the literature, and some cases can be easily proved using the Galois representations attached to eigenforms, but we provide a proof using the automorphic representations attached to eigenforms that covers all cases of interest.
Following that we derive a key consequence for weight one forms.

\begin{theorem}\label{thm:slope_bound}
Let $f \in S_{k}(N, \epsilon; \C_{p})$ be a normalized eigenform.
Suppose that $p$ divides $N$, so we have that the $U_{p}$ eigenvalue of $f$ is $a_{p}(f)$.
If $a_{p}(f) \neq 0$ then
\[
0 \leq \ord_{p}(a_{p}(f)) \leq k-1.
\]
\end{theorem}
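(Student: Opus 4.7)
The plan is to establish the two inequalities separately.

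The lower bound $\ord_{p}(a_{p}(f)) \geq 0$ is immediate from integrality of Hecke eigenvalues: as discussed in \cref{sec:hecke_fields}, the Hecke algebra $H_{k}(N,\epsilon;\Z)$ is finite over $\Z$, so each $a_{\ell}(f)$ (including $a_{p}(f)$ when $p\mid N$) is an algebraic integer. Embedding its ring of integers into $\overline{\Z}_{p}\subset\C_{p}$ gives $\ord_{p}(a_{p}(f))\geq 0$.

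For the upper bound, I would split according to whether $f$ is $p$-new or $p$-old at level $N$. If $f$ is $p$-old, Atkin--Lehner theory lets me write $f$ as a linear combination of $g(z)$ and $g(pz)$ for a normalized eigenform $g$ of level $N/p$ with $p\nmid N/p$. The $U_{p}$-eigenvalue $a_{p}(f)$ is then one of the two roots of the Hecke polynomial $x^{2}-a_{p}(g)x+\epsilon(p)p^{k-1}$ coming from level $N/p$. Both roots are algebraic integers by the previous paragraph, so their $p$-adic valuations are nonnegative; and since their product is $\epsilon(p)p^{k-1}$ (a unit times $p^{k-1}$), the slopes sum to $k-1$. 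Hence each slope, and in particular $\ord_{p}(a_{p}(f))$, lies in $[0,k-1]$. If $f$ itself is $p$-old but its underlying newform $g$ still has $p$ dividing its conductor, I would iterate this reduction, or directly reduce to the $p$-new case at a smaller level.

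If $f$ is $p$-new, I would pass to the automorphic representation $\pi=\otimes_{v}\pi_{v}$ attached to $f$ and analyze the local component $\pi_{p}$. The hypothesis $a_{p}(f)\neq 0$ is equivalent to the newvector in $\pi_{p}$ being a $U_{p}$-eigenvector with nonzero eigenvalue, which rules out $\pi_{p}$ supercuspidal (supercuspidals have $a_{p}=0$ on any nonzero vector fixed by a congruence subgroup of Iwahori type). So $\pi_{p}$ is either a (ramified) principal series $\pi(\chi_{1},\chi_{2})$ with exactly one of $\chi_{1},\chi_{2}$ unramified, or a twist of Steinberg $\mathrm{St}(\chi)$ with $\chi$ unramified. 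In each such case a direct computation on the newvector gives an explicit formula for $a_{p}(f)$: up to normalization, $a_{p}(f)=\chi_{\mathrm{unr}}(p)\,p^{(k-2)/2}$ in the principal series case (where $\chi_{\mathrm{unr}}$ is the unramified character among $\chi_{1},\chi_{2}$), and similarly $a_{p}(f)=\chi(p)\,p^{(k-2)/2}$ in the Steinberg case. Since $|\chi_{\mathrm{unr}}(p)|_{p}=p^{-\ord_{p}(\chi_{\mathrm{unr}}(p))}$ with $\ord_{p}(\chi_{\mathrm{unr}}(p))\in[-(k-1)/2,(k-1)/2]$ (forced by integrality of $a_{p}(f)$), the valuation $\ord_{p}(a_{p}(f))$ lands in $[0,k-1]$.

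The main obstacle is the $p$-new case: the computation requires care with normalizations relating the classical Hecke action on $f$ to the adelic action on the newvector in $\pi_{p}$, and the explicit determination of $\pi_{p}$ from the hypothesis $a_{p}(f)\neq 0$ must rule out supercuspidals cleanly. The $p$-old case, by contrast, is a textbook Newton polygon argument and should be essentially routine.
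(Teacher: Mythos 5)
Your lower bound and your treatment of the $p$-old case are fine (the latter replaces the paper's unramified principal series computation with the classical Hecke polynomial $x^{2} - a_{p}(g)x + \epsilon(p)p^{k-1}$ at level $N/p$: both roots are algebraic integers, their product has valuation $k-1$, hence each valuation lies in $[0, k-1]$), and ruling out supercuspidals when $a_{p}(f) \neq 0$ is also correct. The gap is in the $p$-new ramified principal series case. You write $a_{p}(f) = \chi_{\mathrm{unr}}(p)\, p^{(k-2)/2}$ and then claim that $\ord_{p}(\chi_{\mathrm{unr}}(p)) \in [-(k-1)/2, (k-1)/2]$ is ``forced by integrality of $a_{p}(f)$''. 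Integrality only gives the lower bound $\ord_{p}(\chi_{\mathrm{unr}}(p)) \geq -(k-2)/2$, i.e. $\ord_{p}(a_{p}(f)) \geq 0$; it says nothing about an upper bound, which is exactly the inequality the theorem is after. Nothing in your setup bounds $\ord_{p}(\chi_{\mathrm{unr}}(p))$ from above: the central character only controls the product $\chi_{1}(p)\chi_{2}(p)$, and the value at $p$ of the ramified character $\chi_{2}$ is not a priori bounded below in valuation (ramifiedness and unitarity constrain its restriction to $\Z_{p}^{\times}$ and its complex absolute value, not its $p$-adic valuation after embedding into $\C_{p}$).

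This is precisely where the paper's proof does extra work (its Case 2): it twists $f$ by the Dirichlet character attached to the ramified character to produce a second classical eigenform $g = f \otimes \chi^{-1}$ whose $U_{p}$-eigenvalue is the (normalized) $\chi_{2}(p)$; integrality applied to $g$ bounds $\ord_{p}(\chi_{2}(p))$ from below, and the central-character relation for $\chi_{1}(p)\chi_{2}(p)$ then converts this into the desired upper bound on $\ord_{p}(\chi_{1}(p))$, hence on $\ord_{p}(a_{p}(f))$. You would need to add this twisting step, or a substitute such as the Atkin--Lehner--Li relation $a_{p}(f)\overline{a_{p}(f)} = p^{k-1}$ for newforms whose nebentypus at $p$ has conductor equal to the $p$-part of the level, together with an argument converting that archimedean statement into a $p$-adic valuation bound. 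A smaller remark of the same kind: in the Steinberg case the bound really comes from the central character (which forces $\chi(p)^{2}$ to be a root of unity times a known power of $p$, pinning $\ord_{p}(a_{p}(f))$ down exactly), not from integrality; there the conclusion is easy to repair, but as written the stated reason is again not the one doing the work.
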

\begin{proof}
We know from the integrality of the Hecke algebra that $a_{p}(f)$ will be integral, i.e. $\ord_{p}(a_{p}(f)) \geq 0$.
Let $\pi_{f}$ be the automorphic representation attached to $f$.
We choose to normalize $\pi_{f}$ so that the $U_{p}$ eigenvalue of the modular form $f$ is $\sqrt{p}$ times the $U_{p}$ eigenvalue of $\pi_{f, p}$.
Thus we wish to show that the $U_{p}$ eigenvalue of $\pi_{f, p}$ has valuation bounded above by $k - \frac{3}{2}$.
Under the assumption that the $U_{p}$ eigenvalue is non-zero, there are three possible cases for what $\pi_{f, p}$ can be: an irreducible principal series representation with both characters unramified, an irreducible principal series representation with one character unramified, or an unramified twist of the Steinberg representation.
We treat each case separately to establish the upper bound.

Case 1: $\pi_{f, p}$ is an irreducible principal series representation $PS(\chi_{1}, \chi_{2})$ where both characters $\chi_{i}$ are unramified.
In this case we know that the $U_{p}$ eigenvalue is either $\chi_{1}(p)$ or $\chi_{2}(p)$.
Since the central character of $\pi_{f}$ has weight $k-2$, we know that the product $\chi_{1}(p)\chi_{2}(p)$ has valuation $k-2$.
Since we know that $\ord_{p}(\chi_{i}(p)) \geq -\frac{1}{2}$, we get that each must have valuation at most $k - 2 + \frac{1}{2} = k - \frac{3}{2}$.
Thus the $U_{p}$ eigenvalue of $\pi_{f, p}$ has valuation at most $k - \frac{3}{2}$.

Case 2: $\pi_{f, p}$ is an irreducible principal series representation $PS(\chi_{1}, \chi_{2})$ where only $\chi_{1}$ is unramified.
Let $\alpha_{1} = \chi_{1}(p)$, which is the $U_{p}$ eigenvalue of $\pi_{f, p}$, hence it has valuation at least $-\frac{1}{2}$.
Let $\chi$ be the character of $\prod_{\ell} \Z_{\ell}^{\times}$ which is equal to $\chi_{2}$ on the $p$-component and trivial on all others; we can view this as a finite order Dirichlet character.
Take $g$ to be the eigenform $f \otimes \chi^{-1}$.
Thus we have that $\pi_{g, p} = \pi_{f, p} \otimes \chi^{-1}|_{\Z_{p}^{\times}}$, which is the principal series representation $PS(\chi_{1}\chi^{-1}, \chi_{2}\chi^{-1})$.
Note that our choice of $\chi$ means that $\chi_{1}\chi^{-1}$ is ramified and $\chi_{2}\chi^{-1}$ is unramified.
Let $\alpha_{2} = \chi_{2}(p)$, which is the $U_{p}$ eigenvalue of $g$, hence it has valuation at least $-\frac{1}{2}$.
Since we've only twisted $f$ by a finite order character that weight of the central character remains unchanged.
From this we conclude that $\ord_{p}(\alpha_{1}\alpha_{2}) \leq k - 2$, and since each has valuation at least $-\frac{1}{2}$ we conclude that each has valuation at most $k - \frac{3}{2}$.

Case 3: $\pi_{f, p}$ is an unramified twist of the Steinberg representation $S(\chi)$.
In this case the $U_{p}$ eigenvalue of $\pi_{f, p}$ is $\chi(p)$.
We know that the central character evaluated at $p$ is equal to $p \chi(p)^{2}$.
Since this must have valuation equal to $k - 2$, we see that $\chi(p)$ has valuation equal to $\frac{k-3}{2}$ which is certainly less than $k - \frac{3}{2}$.
Note in particular that this cannot occur when $k = 1$ since $\frac{k-3}{2} = -1$ is less than $-\frac{1}{2}$, which we already know to be a lower bound on the valuation.

Thus in all cases we have the desired bounds on the $U_{p}$ eigenvalue of $\pi_{f, p}$, which gives us the desired bounds on the $U_{p}$ eigenvalue of $f$ itself.
\end{proof}

Using this bound on the valuation of the $U_{p}$ eigenvalue of an eigenform, we prove the ``automatically ordinary'' property for weight one eigenforms alluded to in \cref{sec:strategy}.
This is simply a matter of applying the bound from \cref{thm:slope_bound} in the case $k = 1$.

\begin{corollary}\label{cor:weight_one_conjugates}
Let $f \in S_{1}(N, \epsilon; \C_{p})$ be a normalized eigenform of weight one.
If $a_{p}(f) \neq 0$ then
\[
\ord_{p}(a_{p}(f^{\sigma})) = 0
\]
for all $\sigma \in G_{\Q}$.
\end{corollary}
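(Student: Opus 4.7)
The plan is to avoid trying to transport the $p$-adic valuation of $a_p(f)$ across $\sigma$ to that of $a_p(f^\sigma)$ directly, and instead to apply Theorem \ref{thm:slope_bound} separately to each Galois conjugate $f^\sigma$.

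The first step is to observe that $f^\sigma$ is itself a normalized eigenform of weight one. By the construction of Galois conjugation recalled in Section \ref{sec:galois_conjugates}, the Hecke eigensystem of $f^\sigma$ is $\sigma \circ \psi_f$; in particular $a_p(f^\sigma) = \sigma(a_p(f))$. Since $\sigma$ is a field automorphism and hence injective on the number field containing $a_p(f)$, the hypothesis $a_p(f) \neq 0$ immediately yields $a_p(f^\sigma) \neq 0$.

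The second and final step is to apply Theorem \ref{thm:slope_bound} to $f^\sigma \in S_1(N, \epsilon^\sigma; \C_p)$: the level is still $N$ (divisible by $p$ in the context in which $a_p$ is the $U_p$-eigenvalue), and $k = 1$, so the slope inequality $0 \leq \ord_p(a_p(f^\sigma)) \leq k - 1$ collapses to the desired equality $\ord_p(a_p(f^\sigma)) = 0$.

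There is no serious obstacle here; the argument is little more than an observation. The one conceptual point worth flagging is that a Galois automorphism $\sigma$ need not preserve the $p$-adic valuation on $\overline{\Q}$, so one cannot short-circuit the proof by writing $\ord_p(a_p(f^\sigma)) = \ord_p(\sigma(a_p(f))) = \ord_p(a_p(f))$. Invoking Theorem \ref{thm:slope_bound} afresh for each $f^\sigma$ circumvents this entirely, since the slope bound pins the valuation uniformly across the entire Galois orbit without any Galois-equivariant identification of valuations being required.
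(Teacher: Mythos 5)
Your proposal is correct and is essentially identical to the paper's own proof: apply \cref{thm:slope_bound} to each conjugate $f^{\sigma} \in S_{1}(N, \epsilon^{\sigma}; \C_{p})$, noting $a_{p}(f^{\sigma}) = \sigma(a_{p}(f)) \neq 0$, so the slope bound with $k=1$ forces $\ord_{p}(a_{p}(f^{\sigma})) = 0$. Your closing remark about why one cannot simply transport the valuation through $\sigma$ is a fair observation but does not change the argument.
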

\begin{proof}
A Galois conjugate $f^{\sigma}$ of $f$ will be a normalized eigenform in the space $S_{1}(N, \epsilon^{\sigma}; \C_{p})$.
In particular \cref{thm:slope_bound} still applies to $f^{\sigma}$, since $a_{p}(f^{\sigma}) = \sigma(a_{p}(f)) \neq 0$.
So we conclude that 
\[
0 \leq \ord_{p}(a_{p}(f^{\sigma})) \leq 1 - 1 = 0.
\]
\end{proof}
\section{Families of modular forms}\label{sec:hida}

\subsection{Ordinary families of elliptic modular forms}\label{sec:hida_elliptic}

In this section we summarize the elements of Hida's theory of ordinary families of elliptic modular forms which we will use in later sections.
The main idea is that for any space of forms with level divisible by $p$ we have an action of the $U_{p}$ operator.
Hida's key realization was that the $U_{p}$-ordinary subspace of a space of modular forms has bounded dimension as we vary the weight and Nebentypus character.
As a consequence of this the $U_{p}$-ordinary subspaces of forms in a fixed tame level can be interpolated into a single family, finite over a weight space $\Lambda$ parametrizing the weight-character (viewed as a $p$-adic character of $\Z_{p}^{\times}$).
Whenever we discuss ordinarity (of a space of modular forms, or Hecke algebra, etc.) from now we always mean ordinarity with respect to the $U_{p}$ operator, so we will say ``ordinary'' rather than ``$U_{p}$-ordinary''.
For elliptic forms all the statements we need can be found in Hida's original papers \cite{hida_iwasawa_modules} and \cite{hida_galois_representations} together with Wiles' work on Galois representations attached to ordinary eigenforms \cite{wiles_ordinary}.

We work with $\Lambda$-adic Hecke algebras as our main objects.
We fix the following notation for use in this section.
\begin{itemize}
\item An odd prime number $p$.
\item A positive integer $N$ coprime to $p$, which will be the prime-to-$p$ part of the level of our forms.
\item $K$ a finite extension of $\Q_{p}$, with ring of integers $\calO_{K}$.
\item $\Lambda = \calO_{K}\llbracket T \rrbracket$, the ring of formal power series in one variable over $\calO_{K}$.
\item For any positive integer $k$ and $p$-power root of unity $\zeta$, let $P_{k, \zeta}$ be the kernel of the homomorphism
\begin{align*}
\Lambda	& \to \Qbar_{p} \\
T 		& \mapsto \zeta(1 + p)^{k-1} - 1.
\end{align*}
\end{itemize}

We view $\Lambda$ as the $\calO_{K}$ group ring of the torsion-free part of the Galois group 
\[
\Gal(\Q(\mu_{p^{\infty}})/\Q) \cong \Z_{p}^{\times}.
\]
The torsion-free part is $(1 + p)\Z_{p}$; the isomorphism with $\calO_{k}\llbracket T \rrbracket$ is realized by sending $1 + p$ to $T$.
While we could view all of our Hecke algebras as living over the larger group ring $\calO_{K}\llbracket \Z_{p}^{\times} \rrbracket$, the $(\Z/p\Z)^{\times}$ part of the character plays no role in our arguments so we will work solely with Hecke algebras as $\Lambda$-modules.
Given the above setup, Hida's theory asserts the existence of a ``universal'' ordinary Hecke algebra.

\begin{theorem}\label{thm:hida_theory_main_statement}
There exists a Hecke algebra $\mathbf{H}^{\ord}(N; \calO_{K})$ with the following properties.
\begin{enumerate}
\item $\mathbf{H}^{\ord}(N; \calO_{K})$ is a finitely generated free $\Lambda$-module.
\item (Base Change) If $L$ is a finite extension of $K$, with ring of integers $\calO_{L}$, we have that
\[
\mathbf{H}^{\ord}(N; \calO_{L}) \cong \mathbf{H}^{\ord}(N; \calO_{K}) \otimes_{\calO_{K}} \calO_{L}.
\]
\item $\mathbf{H}^{\ord}(N; \calO_{K})$ is generated as a $\Lambda$-module by a collection of elements $T_{\ell}$, $S_{\ell}$ for each prime $\ell\nmid Np$, and elements $U_{\ell}$ for each prime $\ell | Np$.
\item (Control Theorem) Let $k \geq 2$ be an integer, and $\zeta$ a $p^{r}$-th root of unity for some $r \geq 0$.
Suppose that $K$ is large enough to contain $\zeta$.
Let $\epsilon: (1+p)\Z_{p} \to \calO_{K}^{\times}$ be the character taking $1+p$ to $\zeta$.
Then the natural map
\[
\mathbf{H}^{\ord}(N; \calO_{K})/P_{k, \zeta} \mathbf{H}^{\ord}(N; \calO_{K}) \overset{\cong}{\to} H^{\ord}_{k}(\Gamma_{1}(Np) \cap \Gamma_{0}(p^{r}), \epsilon; \calO_{K})
\]
sending the abstract elements $T_{\ell}, S_{\ell}, U_{\ell}$ on the left to the equivalently named Hecke operators on the right is an isomorphism of $\Lambda$-modules.
In other words $\mathbf{H}^{\ord}(N; \calO_{K})$ interpolates the ordinary subspaces of all spaces of forms with prime to $p$ level $\Gamma_{1}(N)$ and weight $k \geq 2$.
\end{enumerate}
\end{theorem}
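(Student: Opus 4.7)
The plan is to construct $\mathbf{H}^{\ord}(N; \calO_{K})$ as an inverse limit of classical ordinary Hecke algebras along the tower of levels $Np^{r}$ in weight $2$, and then verify the four properties in sequence. The foundational technical input is \emph{Hida's vertical control}: the $\calO_{K}$-rank of $e\, S_{k}(\Gamma_{1}(Np^{r}); \calO_{K})$, where $e = \lim_{n \to \infty} U_{p}^{n!}$ is Hida's ordinary idempotent, is bounded independently of $k \geq 2$ and $r \geq 1$ for fixed $N$. I would prove this by comparing ordinary parts in different weights via multiplication by Eisenstein series congruent to $1$ modulo a large power of $p$, reducing to the bound in weight $2$, which itself follows from the Eichler--Shimura isomorphism realizing the weight $2$ ordinary part as a Hecke-stable direct summand of the first \'etale cohomology of a modular curve of fixed level $\Gamma_{1}(Np)$.

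Setting $\mathbf{H}^{\ord}(N; \calO_{K}) := \invlim_{r} H^{\ord}_{2}(\Gamma_{1}(Np^{r}); \calO_{K})$, with $\Lambda$-action via diamond operators and the identification of the torsion-free part of $\Gal(\Q(\mu_{p^{\infty}})/\Q)$ with $(1+p)\Z_{p}$, property (3) is immediate because the Hecke operators $T_{\ell}, S_{\ell}, U_{\ell}$ are compatible with the transition maps in the tower. For (1), finite generation over $\Lambda$ is immediate from vertical control; freeness over the two-dimensional regular local ring $\Lambda$ is more delicate, and I would extract it via duality against an explicit $\Lambda$-adic module of ordinary cusp forms built as a Pontryagin dual of ordinary parts of $\Q_{p}/\Z_{p}$-coefficient cohomology along the tower. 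Reflexivity of both sides combined with constancy of the $\calO_{K}$-rank on all arithmetic fibers $P_{k, \zeta}$ forces freeness. Property (2) then follows because the uniform rank bound lets the inverse limit commute with extension of scalars from $\calO_{K}$ to $\calO_{L}$.

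The main obstacle is the control theorem (4). One direction---that the specialization map factors through a surjection onto a subalgebra of the classical ordinary Hecke algebra---is immediate from the construction and the identification of how diamonds at $1+p$ act after reduction modulo $P_{k, \zeta}$. The hard direction is showing that the specialization is an isomorphism onto the \emph{full} classical ordinary Hecke algebra. I would handle this by a rank comparison: by freeness in (1), the $\calO_{K}$-rank of $\mathbf{H}^{\ord}(N; \calO_{K})/P_{k, \zeta}$ equals the generic $\Lambda$-rank of $\mathbf{H}^{\ord}(N; \calO_{K})$, and vertical control matches this to the $\calO_{K}$-rank of the target classical Hecke algebra---provided every ordinary $\Qbar_{p}$-valued eigensystem of the target arises by specialization from the family. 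This last point is the genuine heart of the argument: the \emph{classicality} of ordinary $p$-adic modular eigenforms of weight $k \geq 2$, which I would establish via Coleman's classicality theorem for slope-$0$ overconvergent forms, or equivalently via a direct cohomological comparison of the ordinary part of parabolic cohomology in weight $k$ with that in weight $2$ along the tower.
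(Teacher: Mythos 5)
The paper does not reprove this theorem: its ``proof'' is a citation of Hida's original results (freeness is Theorem 3.1 of \cite{hida_iwasawa_modules}, via the geometric theory of $p$-adic modular forms, and the control theorem for all weights $k\geq 2$ and characters is Theorem 1.2 of \cite{hida_galois_representations}, via group cohomology of congruence subgroups). Your proposal is essentially a reconstruction of Hida's cohomological construction (inverse limit over the tower $Np^{r}$, weight-independence via Eisenstein congruences, duality with $\Lambda$-adic forms, control by rank comparison), so in spirit you are following the cited route rather than a new one. That said, two steps as written would not go through.

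First, your ``vertical control'' statement is false as stated: the $\calO_{K}$-rank of $e\,S_{k}(\Gamma_{1}(Np^{r});\calO_{K})$ is bounded independently of the weight $k\geq 2$ for \emph{fixed} level, but it grows like $p^{r-1}$ in the level direction (new ordinary forms with nebentypus of conductor $p^{r}$ keep appearing). What is bounded independently of $r$ is the rank over $\Lambda_{r}=\calO_{K}[(1+p\Z_{p})/(1+p\Z_{p})^{p^{r-1}}]$, equivalently the rank of each fixed-character eigenspace. If your statement were literally true, your inverse limit would be finite over $\calO_{K}$ rather than merely finite over the two-dimensional ring $\Lambda$, which is absurd; and the corrected statement no longer makes finite generation over $\Lambda$ ``immediate'' --- one needs genuine control in the level direction, which is a separate argument in Hida's papers. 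Second, the freeness and control steps gloss the hard points. Your reflexivity argument (a $\Lambda$-dual of a finitely generated module over the $2$-dimensional regular local ring $\Lambda$ is reflexive, hence free) is a legitimate shortcut, but it presupposes the perfect $\Lambda$-duality between $\mathbf{H}^{\ord}(N;\calO_{K})$ and the module of $\Lambda$-adic ordinary cusp forms, and establishing that duality is exactly where Hida needs the geometric theory; as the paper itself remarks, the purely cohomological construction yields only torsion-freeness. Finally, in the control theorem you have the two inputs interchanged: the statement that every classical ordinary weight-$k$ eigensystem is seen by the family is what gives surjectivity (and follows from Hida's weight-shifting congruences, not from classicality), whereas classicality of ordinary $p$-adic eigenforms of weight $k\geq 2$ (or the direct comparison of ordinary parabolic cohomology in weights $k$ and $2$) is what caps the rank of the specialization and gives injectivity modulo $P_{k,\zeta}$. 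Coleman's classicality theorem is anachronistic but usable here; Hida's original proof is the cohomological comparison you mention as an alternative.
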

\begin{proof}
The fact that $\mathbf{H}^{\ord}(N; \calO_{K})$ is finite free over $\Lambda$ is Theorem 3.1 of \cite{hida_iwasawa_modules}.
The control theorem is Theorem 1.2 of \cite{hida_galois_representations}.
The other statements are all consequences of the definition of $\mathbf{H}^{\ord}(N; \calO_{K})$, and can be found in \cite{hida_iwasawa_modules}.
\end{proof}

There are two constructions of this universal ordinary Hecke algebra $\mathbf{H}^{\ord}(N; \calO_{K})$.
The first, using Katz's theory of geometric $p$-adic modular forms, appears in \cite{hida_iwasawa_modules}.
The second, based on Betti cohomology of modular curves and group cohomology of congruence subgroups of $\SL_{2}(\Z)$, appears in \cite{hida_galois_representations}.
We refer to these approaches as the \emph{geometric} and \emph{cohomological} approaches to Hida theory.
Each approach has benefits and drawbacks, and both are necessary in order to develop all facets of the theory which we use in this work.
The geometric approach is crucial to understanding how forms of weight one fit into $\mathbf{H}^{\ord}(N; \calO_{K})$, a topic which we explore in \cref{sec:hida_weight_one}.
If one wants freeness over $\Lambda$ of $\mathbf{H}^{\ord}(N; \calO_{K})$ rather than just torsion-freeness this is provided only by the geometric approach.
A downside of the geometric approach is that, at least in Hida's original work, it only deals with the case $r = 0$ of the control theorem, i.e. forms with trivial Nebentypus character.
While this is sufficient to uniquely determine $\mathbf{H}^{\ord}(N; \calO_{K})$ it is not enough for our application, as we will need to specialize at infinitely many different Nebentypus characters in a single weight.
The cohomological approach is comparatively simpler as it does not require the algebraic geometry machinery of the geometric approach.
Proving the control theorem for all characters $\epsilon$ is much more straightforward under the cohomological framework than the geometric one.
The downsides of the cohomological approach are that it only produces torsion-freeness of $\mathbf{H}^{\ord}(N; \calO_{K})$ over $\Lambda$ (as opposed to freeness) and that it gives no information about the weight one specializations of the Hecke algebra.

We turn now to a discussion of ``components'' of $\mathbf{H}^{\ord}(N; \calO_{K})$.
As we are interested in maps from $\mathbf{H}^{\ord}(N; \calO_{K})$ to rings of integers, for the moment we work with its maximal reduced quotient $\mathbf{H}^{\ord}(N; \calO_{K})^{\red}$.
We know that $\mathbf{H}^{\ord}(N; \calO_{K})^{\red} \otimes_{\Lambda} \Frac(\Lambda)$ is a product of finite field extensions of $\Frac(\Lambda)$.
Say that
\[
\mathbf{H}^{\ord}(N; \calO_{K})^{\red} \otimes_{\Lambda} \Frac(\Lambda) \cong \prod_{i=1}^{n} \Frac(\mathbf{I}_{i})
\]
where $\mathbf{I}_{i}$ is an integral extension of $\Lambda$.
We then have that 
\[
\mathbf{H}^{\ord}(N; \calO_{K})^{\red} \hookrightarrow \prod_{i=1}^{n} \mathbf{I}_{i}
\]
where each projection map $\mathbf{H}^{\ord}(N; \calO_{K}) \to \mathbf{I}_{i}$ is surjective, although the total map need not be surjective.
The $\mathbf{I}_{i}$ are the ``components'' of $\mathbf{H}^{\ord}(N; \calO_{K})$ (technically it is more accurate to say that $\Spec(\mathbf{I}_{i})$ is a component of $\Spec(\mathbf{H}^{\ord}(N; \calO_{K}))$, though we won't use this point of view).
Note that if $\mathbf{I}$ is a component of $\mathbf{H}^{\ord}(N; \calO_{K})$ we have that $\mathbf{I} = \mathbf{H}^{\ord}(N; \calO_{K})/\frakP$ for some minimal prime ideal $\frakP$ of $\mathbf{H}^{\ord}(N; \calO_{K})$.

Given a normalized eigenform $f \in S_{k}^{\ord}(\Gamma_{1}(Np) \cap \Gamma_{0}(p^{r}), \epsilon; \calO_{L})$ for some weight $k \geq 2$, character $\epsilon$ of conductor $r$, and finite extension $L$ of $K$, we say that $f$ arises from $\mathbf{H}^{\ord}(N; \calO_{K})$ or $f$ arises as a specialization of $\mathbf{H}^{\ord}(N; \calO_{K})$.
If $f$ arises from $\mathbf{H}^{\ord}(N; \calO_{K})$ and $\mathbf{I}$ is a component of $\mathbf{H}^{\ord}(N; \calO_{K})$, we say that $f$ arises from $\mathbf{I}$ if the homomorphism $\mathbf{H}^{\ord}(N; \calO_{K}) \to \calO_{K}$ realizing the eigensystem of $f$ factors through the surjective map $\mathbf{H}^{\ord}(N; \calO_{K}) \to \mathbf{I}$.
Thus far we have not said anything about what happens when $k = 1$; treating the case of weight $k = 1$ is the focus of \cref{sec:hida_weight_one}.

With this notion of forms arising from components, we can state an important uniqueness property of the $\Lambda$-adic Hecke algebra.
\begin{theorem}\label{thm:hida_etale}
Let $f \in S_{k}^{\ord}(\Gamma_{1}(Np) \cap \Gamma_{0}(p^{r}), \epsilon; \calO_{K})$ be a normalized eigenform.
Then there is a unique component $\mathbf{I}$ of the $\Lambda$-adic Hecke algebra $\mathbf{H}^{\ord}(N; \calO_{K})$ such that $f$ arises from $\mathbf{I}$.
\end{theorem}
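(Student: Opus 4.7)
The plan is to show that the localization of $\mathbf{H}^{\ord}(N; \calO_K)$ at the prime $\ker(\phi_f)$ is a discrete valuation ring; this immediately yields the uniqueness of the minimal prime contained in $\ker(\phi_f)$, which is the uniqueness of the component from which $f$ arises.

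First, I would let $\phi_f: \mathbf{H}^{\ord}(N; \calO_K) \to \calO_L$ denote the homomorphism encoding the eigensystem of $f$ (extending scalars to $\calO_L$ if necessary), and set $\frakp = \ker(\phi_f)$. By the Control Theorem part of \cref{thm:hida_theory_main_statement}, $\phi_f$ factors through the quotient $\mathbf{H}^{\ord}(N; \calO_K)/P_{k,\zeta}\mathbf{H}^{\ord}(N; \calO_K) \cong H^{\ord}_k(\Gamma_1(Np) \cap \Gamma_0(p^r), \epsilon; \calO_K)$, so $P_{k,\zeta} \subseteq \frakp$. Since $f$ arises from a component $\mathbf{I} = \mathbf{H}^{\ord}(N; \calO_K)/\frakP$ if and only if $\frakP \subseteq \frakp$, uniqueness of $\mathbf{I}$ reduces to showing that $\frakp$ contains exactly one minimal prime of $\mathbf{H}^{\ord}(N; \calO_K)$.

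Next, let $R = \Lambda_{P_{k,\zeta}}$, a DVR with uniformizer a generator of $P_{k,\zeta}$ and residue field $K$, and let $S = \mathbf{H}^{\ord}(N; \calO_K) \otimes_\Lambda R$. Since $\mathbf{H}^{\ord}(N; \calO_K)$ is finite free over $\Lambda$, $S$ is finite free over $R$, and its special fiber $S/P_{k,\zeta}S$ equals $H^{\ord}_k \otimes_{\calO_K} K$. I would argue that this fiber is semisimple: the ordinary cuspidal subspace in weight $k \geq 2$ admits a basis of eigenforms, by the Petersson-inner-product semisimplicity of the $T_\ell, S_\ell$ with $\ell \nmid Np$ combined with Atkin-Lehner theory and the unique unit-root choice of $U_p$-eigenvalue on the ordinary part, so $H^{\ord}_k \otimes K$ is a product of fields. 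Thus $S$ is finite flat over $R$ with \'etale fiber (automatic separability in characteristic zero), so $S$ is \'etale over $R$, and it decomposes as a product $\prod_i S_{\frakm_i}$ over its maximal ideals, each factor being a local \'etale extension of a DVR, hence itself a DVR.

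The localization $\mathbf{H}^{\ord}(N; \calO_K)_\frakp$ is one of the factors $S_{\frakm_i}$, hence a DVR, with unique minimal prime $(0)$; contracting yields a unique minimal prime $\frakP \subseteq \frakp$ of $\mathbf{H}^{\ord}(N; \calO_K)$, and $\mathbf{I} = \mathbf{H}^{\ord}(N; \calO_K)/\frakP$ is the unique component from which $f$ arises. The main obstacle is the semisimplicity of $H^{\ord}_k \otimes K$ invoked above: although classical, it is not made explicit earlier in the paper, and some care is needed when including the $U_\ell$ operators for $\ell \mid Np$. One could alternatively cite directly Hida's result that $\mathbf{H}^{\ord}(N; \calO_K)$ is \'etale over $\Lambda$ at arithmetic primes of weight $k \geq 2$ (a companion to \cref{thm:hida_theory_main_statement} found in the same references), which subsumes what is needed here.
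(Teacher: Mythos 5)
Your strategy (reduce to showing that $\mathbf{H}^{\ord}(N;\calO_{K})_{\frakp}$ is a DVR via \'etaleness of the weight-$k$ fiber) is different from what the paper does --- the paper simply invokes Corollary 1.5 of \cite{hida_galois_representations}, whose proof runs through the theory of primitive ($\Lambda$-adic) forms and strong multiplicity one rather than through regularity of the Hecke algebra at arithmetic points. Unfortunately your key step, the semisimplicity of $H^{\ord}_{k}(\Gamma_{1}(Np)\cap\Gamma_{0}(p^{r}),\epsilon;K)$ as an algebra containing the operators $U_{\ell}$ for $\ell \mid N$, is not a classical fact: it is false in general, and in the remaining cases it is an open problem. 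Concretely, if $g$ is an ordinary newform of level prime to $\ell$ and $\ell^{a}$ exactly divides $N$ with $a \geq 3$, then $U_{\ell}$ acts on the $\ell$-old span of $g(z), g(\ell z), \ldots, g(\ell^{a}z)$ (which lies entirely in the ordinary part, since ordinarity is a condition at $p$ only) by a cyclic matrix with characteristic polynomial $X^{a-1}(X^{2}-a_{\ell}(g)X+\ell^{k-1}\epsilon(\ell))$; its minimal polynomial equals its characteristic polynomial, so $U_{\ell}$ is genuinely non-semisimple and the fiber has nilpotents. There are normalized ordinary eigenforms $f$ in this old span with $U_{\ell}$-eigenvalue $0$, and at such an $f$ the localization $\mathbf{H}^{\ord}(N;\calO_{K})_{\frakp}$ is non-reduced, so it is certainly not a DVR --- even though the conclusion of \cref{thm:hida_etale} still holds for this $f$ (the unique minimal prime is the one on which $U_{\ell}$ vanishes identically). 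Even when the power of $\ell$ in $N$ is small, semisimplicity of $U_{\ell}$ at an $\ell$-old eigensystem amounts to $a_{\ell}(g)^{2}\neq 4\ell^{k-1}\epsilon(\ell)$, i.e.\ distinctness of the two Frobenius parameters at $\ell$; this is a theorem of Coleman and Edixhoven for $k=2$ but is open for $k\geq 3$. For the same reason your proposed fallback --- citing ``\'etaleness of $\mathbf{H}^{\ord}(N;\calO_{K})$ over $\Lambda$ at all arithmetic primes of weight $k\geq 2$'' --- is not a result available in the references: what is true (and standard) is \'etaleness of the new quotient at such points, via multiplicity one for newforms, not of the full algebra with all the $U_{\ell}$.

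So your argument does prove the statement when $f$ is new at every prime dividing $N$ (there the relevant local factor of the fiber is a field and the DVR argument is fine), but \cref{thm:hida_etale} as stated allows $f$ to be old at tame primes, and that is exactly where the content of Hida's Corollary 1.5 lies: one identifies $f$ as a stabilization of a unique primitive form, lifts that primitive form to a unique primitive $\Lambda$-adic family (strong multiplicity one in the family), and then checks that the $U_{\ell}$-eigenvalue data of $f$ singles out a unique old component above that primitive family. If you want a self-contained proof you would need an argument of this shape, or else restrict the operators to $T_{\ell}, S_{\ell}$ ($\ell\nmid Np$) and $U_{p}$ and add a separate analysis of the tame $U_{\ell}$; the \'etale/DVR route cannot be pushed through at the problematic old points.
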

\begin{proof}
This is Corollary 1.5 of \cite{hida_galois_representations}.
\end{proof}

\subsection{Galois representations attached to ordinary families}

Each of the normalized eigenforms which $\mathbf{H}^{\ord}(N; \calO_{K})$ interpolates has an attached $2$-dimensional $p$-adic Galois representation.
It should thus not be surprising that these Galois representations also interpolate into a single $\Lambda$-adic Galois representation.
These $\Lambda$-adic representations were first studied by Hida in \cite{hida_galois_representations} and Wiles in \cite{wiles_ordinary}.
We record here the minimal properties of these representations that we use in later sections.

\begin{theorem}\label{thm:hida_theory_galois_representation}
Suppose that $\mathbf{I}$ is a reduced, irreducible component of $\mathbf{H}^{\ord}(N; \calO_{K})$.
Then there exists a continuous $2$-dimensional Galois representation
\[
\rho_{\mathbf{I}}: G_{\Q} \to \GL_{2}(\Frac(\mathbf{I}))
\]
which has the following properties.
\begin{enumerate}
\item $\rho_{\mathbf{I}}$ is absolutely irreducible.
\item $\rho_{\mathbf{I}}$ is unramified away from $Np$, and the characteristic polynomial of a Frobenius element at a prime $\ell \nmid Np$
\[
X^{2} - T_{\ell} X - \ell S_{\ell}.
\]
\item When restricted to a decomposition group at $p$, $\rho_{\mathbf{I}}$ is of the form
\[
\rho_{\mathbf{I}}|_{G_{\Q_{p}}} \cong \begin{bmatrix} \ast & \ast \\ 0 & \lambda \\ \end{bmatrix}
\]
where $\lambda: G_{\Q_{p}} \to \mathbf{I}^{\times}$ is the unramified character sending $\Frob_{p}$ to $U_{p}$.
\item For almost all primes $P$ of $\mathbf{I}$, the representation $\rho_{\mathbf{I}}$ can be taken to have values in the localization $\mathbf{I}_{P}$.
In particular for almost all normalized eigenforms $f$ arising from $\mathbf{I}$ we have that the $p$-adic Galois representation $\rho_{f, p}$ attached to $f$ is equal to the composition of $\rho_{\mathbf{I}}: G_{\Q} \to \GL_{2}(\mathbf{I}_{P})$ combined with the quotient map $\GL_{2}(I_{P}) \to \GL_{2}(I_{P}/PI_{P})$ for some prime $P$ of $\mathbf{I}$.
\end{enumerate}
\end{theorem}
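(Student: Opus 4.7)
The plan is to build $\rho_{\mathbf{I}}$ by interpolating the $2$-dimensional Galois representations attached to classical arithmetic specializations of $\mathbf{I}$, promoting the interpolated trace/determinant data to a genuine representation, and then checking the local conditions. The crucial input is the control theorem from \cref{thm:hida_theory_main_statement}: the arithmetic primes $P_{k,\zeta}$ with $k \geq 2$ are Zariski dense in $\Spec(\mathbf{I})$, and at each such prime the quotient $\mathbf{I}/(\mathbf{I} \cap P_{k,\zeta})$ records the eigensystem of a classical ordinary normalized eigenform $f$, to which a $2$-dimensional $p$-adic Galois representation $\rho_{f,p}$ is already attached.

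First I would construct a pseudo-representation (in the sense of Chenevier's theory of determinants, or equivalently a Taylor--Rouquier pseudo-character) $(t,d)\colon G_{\Q} \to \mathbf{I}$ of dimension $2$. Chebotarev density reduces the construction to specifying the values on Frobenius elements: for $\ell \nmid Np$ set $t(\Frob_{\ell}) = T_{\ell}$ and $d(\Frob_{\ell}) = \ell S_{\ell}$. Continuity and the pseudo-character axioms are identities of the form $P(t,d)=0$ for certain universal polynomials $P$, and it suffices to check each such identity modulo every arithmetic prime $P_{k,\zeta}$; there it becomes the corresponding identity for the trace and determinant of $\rho_{f,p}$, which holds because $\rho_{f,p}$ is a genuine $2$-dimensional representation. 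By Zariski density of the arithmetic primes and the fact that $\mathbf{I}$, being an integral extension of $\Lambda$, has trivial intersection of those primes, the axioms propagate from the fiber to $\mathbf{I}$.

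Next I would lift $(t,d)$ to a genuine representation $\rho_{\mathbf{I}}\colon G_{\Q} \to \GL_{2}(\Frac(\mathbf{I}))$. Since $\Frac(\mathbf{I})$ is a field, Nyssen--Rouquier applies as soon as the pseudo-representation is absolutely irreducible, which follows from absolute irreducibility at any single arithmetic point (itself inherited from a classical eigenform whose representation is irreducible for almost all arithmetic primes). This simultaneously gives property (1) and, by construction, property (2). For property (4), once $\rho_{\mathbf{I}}$ is absolutely irreducible and valued in a finite-dimensional $\Frac(\mathbf{I})$-vector space, a standard lattice argument (choose a $G_{\Q}$-stable $\mathbf{I}$-lattice inside $\Frac(\mathbf{I})^{2}$) together with absolute irreducibility shows that $\rho_{\mathbf{I}}$ is conjugate into $\GL_{2}(\mathbf{I}_{P})$ for all but finitely many height-one primes $P$, and then specialization modulo $P$ recovers $\rho_{f,p}$ up to semisimplification via the matching of traces.

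The main obstacle, and the step I would do most carefully, is property (3): producing the ordinary shape at $p$ over the whole family, not just at arithmetic specializations. My approach is to run the same pseudo-representation/lifting game locally. Restricting $\rho_{\mathbf{I}}$ to a decomposition group $G_{\Q_{p}}$, one wants an unramified quotient character $\lambda$ with $\lambda(\Frob_{p}) = U_{p}$. Define such a candidate $\lambda$ directly as the unramified character of $G_{\Q_{p}}$ whose Frobenius value is the element $U_{p} \in \mathbf{I}^{\times}$ (unit because $\mathbf{I}$ is ordinary). At each arithmetic prime $P_{k,\zeta}$ with $k \geq 2$, Wiles' theorem on ordinary eigenforms (the original source of the local structure in \cite{wiles_ordinary}) produces exactly the upper-triangular shape with unramified quotient $\Frob_{p} \mapsto a_{p}(f)$. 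Hence the subrepresentation $\rho_{\mathbf{I}}|_{G_{\Q_{p}}} \otimes \lambda^{-1}$ has the one-dimensional quotient acting trivially on arithmetic specializations; a density argument, combined with absolute irreducibility of $\rho_{\mathbf{I}}$ to rule out a global splitting, pins down the filtration over $\Frac(\mathbf{I})$ and yields the claimed matrix form.
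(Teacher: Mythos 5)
The paper itself gives no argument here: its ``proof'' is a citation to Theorem 2.1 of \cite{hida_galois_representations}, so your proposal is supplying an argument the paper leaves entirely to the literature. Your overall architecture --- interpolate the classical specializations into a two-dimensional pseudo-representation $(t,d)$ with $t(\Frob_{\ell}) = T_{\ell}$, $d(\Frob_{\ell}) = \ell S_{\ell}$, lift it over the field $\Frac(\mathbf{I})$ by Nyssen--Rouquier using absolute irreducibility inherited from a classical point, and prove (4) by a stable-lattice/localization argument --- is the standard Wiles-style construction and is sound in outline. One point you elide in the first step: Chebotarev only gives uniqueness of a continuous extension of the assignment on Frobenii, not existence; the usual fix is to map $G_{\Q}$ continuously into $\prod_{P} \mathbf{I}/P$ (product over arithmetic primes) via the traces of the $\rho_{f,p}$, observe that $\mathbf{I}$ is compact so its diagonal image is closed, and use density of Frobenii to see that this function lands in $\mathbf{I}$.

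The genuine gap is in property (3). A density argument of the kind you describe proves an identity of pseudo-characters on the decomposition group, namely $\Tr \rho_{\mathbf{I}}(g) = \lambda(g) + (\det\rho_{\mathbf{I}})(g)\lambda^{-1}(g)$ for all $g \in G_{\Q_{p}}$, i.e.\ that the semisimplification of $\rho_{\mathbf{I}}|_{G_{\Q_{p}}}$ is $\lambda \oplus (\det\rho_{\mathbf{I}})\lambda^{-1}$. It does not produce the filtration with $\lambda$ as the \emph{quotient}. If $\rho_{\mathbf{I}}|_{G_{\Q_{p}}}$ is indecomposable it has a unique stable line, and nothing in your sketch excludes the possibility that this line affords $\lambda$, so that the unramified character sits as the sub rather than the quotient; pointwise ordinarity of the specializations does not immediately forbid this, since the ``wrong'' extension class could a priori vanish at every arithmetic specialization, or the specialized local representations could themselves be split. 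Moreover the clause ``absolute irreducibility of $\rho_{\mathbf{I}}$ to rule out a global splitting'' is off target: splitting of $\rho_{\mathbf{I}}|_{G_{\Q_{p}}}$ is not something to rule out (if it splits, the asserted shape holds trivially), and global absolute irreducibility says nothing about the local filtration. This is exactly the delicate point in the literature: Wiles (\cite{wiles_ordinary}) constructs the unramified quotient integrally by patching the ordinary filtrations at finite level, and Hida's and Mazur--Wiles' geometric constructions obtain it from the connected--\'etale sequence of the relevant $p$-divisible groups. To make your route work you would either import such an argument, or argue further: for instance, show that if the unique stable line afforded $\lambda$ then reducing a stable lattice at an arithmetic prime would force $\rho_{f,p}|_{G_{\Q_{p}}}$ to have a stable line affording the unramified character, and then treat separately the case in which all these local representations split --- neither step is supplied by the density argument as stated.
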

\begin{proof}
This is Theorem 2.1 of \cite{hida_galois_representations}.
\end{proof}

\subsection{Weight one forms in ordinary families}\label{sec:hida_weight_one}

While Hida's articles are very precise about the specialization of ordinary families in weights $k \geq 2$, eigenforms of weight one are not discussed directly in these articles.
It does follow from Hida's first construction of ordinary families, using geometric $p$-adic modular forms, that every classical $p$-ordinary weight one eigenform arises as the specialization of an ordinary family.
As this is not obviously stated in the literature, we discuss this explicitly here, along with a key consequence for the Hecke fields of $p$-ordinary weight one eigenforms.

\begin{proposition}\label{prop:hida_theory_weight_one}
Given a character $\epsilon: (1+p)\Z_{p} \to \mu_{p^{r}}$ sending $1+p$ to a generator $\zeta$ of $\mu_{p^{r}}$, there is a natural surjective homomorphism
\[
\mathbf{H}^{\ord}(N;\calO_{K})/P_{1, \zeta} \mathbf{H}^{\ord}(N;\calO_{K}) \twoheadrightarrow H^{\ord}_{1}(\Gamma_{1}(Np) \cap \Gamma_{0}(p^{r}), \epsilon; \calO_{K}).
\]
sending the abstract elements $T_{\ell}, S_{\ell}, U_{\ell}$ on the left to the equivalently named Hecke operators on the right.
Put differently, every $p$-ordinary weight one eigenform arises as the specialization of an ordinary family.
\end{proposition}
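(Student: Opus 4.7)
The plan is to invoke Hida's geometric construction of $\mathbf{H}^{\ord}(N; \calO_K)$ from \cite{hida_iwasawa_modules}, where the ordinary Hecke algebra is realized as the image of the abstract Hecke algebra $\calO_K[T_\ell, S_\ell, U_\ell]$ inside $\End_\Lambda(V^{\ord})$ for a suitable module $V^{\ord}$ of ordinary $\Lambda$-adic cusp forms, built from geometric $p$-adic modular forms on the ordinary locus of the modular curve of tame level $N$. Given this, the desired surjection will follow from producing a Hecke- and $\Lambda$-equivariant embedding of the classical ordinary weight-one cusp form space into the weight-$(1,\zeta)$ specialization of $V^{\ord}$.

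First, I would unpack the geometric construction, noting in particular that $\mathbf{H}^{\ord}(N; \calO_K)/P_{1,\zeta}$ is by construction a quotient of the abstract Hecke algebra acting on the specialization $V^{\ord}/P_{1,\zeta}V^{\ord}$. Second, I would interpret a classical $p$-ordinary weight-one cusp form $f \in S_1^{\ord}(\Gamma_1(Np) \cap \Gamma_0(p^r), \epsilon; \calO_K)$ as a $p$-adic modular form by restricting its associated section of the line bundle $\omega$ on the modular curve of level $\Gamma_1(Np^r)$ to the ordinary locus. For this to give a well-defined map into $V^{\ord}/P_{1,\zeta}V^{\ord}$ one must verify: injectivity via the $q$-expansion principle on the ordinary locus; that $p$-ordinarity of $f$ places the image in the slope-zero part under $U_p$; and that the diamond operators at $p$ act through $\epsilon$ on the image, matching the weight character specialization $T \mapsto \zeta - 1$. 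Finally, dualizing this embedding of cusp form spaces in the spirit of \cref{prop:hecke_algebra_pairing} converts it into a surjection of Hecke algebras, yielding the map $\mathbf{H}^{\ord}(N; \calO_K)/P_{1,\zeta} \twoheadrightarrow H_1^{\ord}(\Gamma_1(Np) \cap \Gamma_0(p^r), \epsilon; \calO_K)$.

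The main obstacle is step two: justifying the Hecke-equivariant embedding of classical weight-one forms into $V^{\ord}/P_{1,\zeta}V^{\ord}$. In weights $k \geq 2$ the control theorem of \cref{thm:hida_theory_main_statement} provides this as an isomorphism, but in weight one the map need not even be surjective --- this is the phenomenon that gives the main theorem its content, since there can genuinely be non-classical $p$-adic weight-one forms in a given component. It is essential to use the geometric rather than cohomological approach to Hida theory here, because classical weight-one cusp forms do not contribute to the cohomology of modular curves in weight one and so are invisible to the cohomological construction; they are accessible only through sections of the automorphic line bundle $\omega$. Once the geometric framework is adopted, the restriction of such a section to the ordinary locus is essentially automatic, and Hecke-equivariance reduces to the standard comparison of classical and $p$-adic Hecke correspondences on $q$-expansions.
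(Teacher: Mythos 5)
Your overall geometric strategy is the right one and is in the same spirit as the paper's proof: use Hida's geometric construction, regard a classical $p$-ordinary weight one form as a $p$-adic modular form by restricting its section of $\omega$ to the ordinary locus, and pass to Hecke algebras by duality. But your second step, as formulated, has a genuine gap. Restriction to the ordinary locus places $f$ in the space of ordinary $p$-adic modular forms of weight one and character $\epsilon$, i.e.\ in the $P_{1,\zeta}$-\emph{torsion} part of the big space of ordinary $p$-adic forms; it does not by itself place $f$ in $V^{\ord}/P_{1,\zeta}V^{\ord}$, which is a \emph{quotient} of the $\Lambda$-adic module. Identifying the classical (or even the $p$-adic) weight one space with a subspace of that quotient is precisely a weight one control statement: it amounts to lifting $f$ to a $\Lambda$-adic form, which is not supplied by the control theorem of \cref{thm:hida_theory_main_statement} (stated only for $k \geq 2$) and is essentially the assertion being proved (``every $p$-ordinary weight one eigenform arises as a specialization of an ordinary family''). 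So as written, the plan either begs the question or silently invokes an unproved weight one control theorem for the module of $\Lambda$-adic forms.

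The way to repair it --- and this is what the paper does --- is to avoid specializing the $\Lambda$-adic module at all. In the geometric construction, $\mathbf{H}^{\ord}(N;\calO_{K})$ can be viewed as the Hecke algebra acting on the single large space $\mathcal{S}$ of all ordinary geometric $p$-adic cusp forms of tame level $N$ (the classical spaces of varying weight and $p$-power level being dense in it). The classical ordinary forms in $S_{1}^{\ord}(\Gamma_{1}(Np)\cap\Gamma_{0}(p^{r}),\epsilon;\calO_{K})$, viewed as $p$-adic forms, constitute a Hecke-stable subspace of $\mathcal{S}$, so restriction of operators immediately yields a surjection of $\mathbf{H}^{\ord}(N;\calO_{K})$ onto $H^{\ord}_{1}(\Gamma_{1}(Np)\cap\Gamma_{0}(p^{r}),\epsilon;\calO_{K})$; this surjection kills $P_{1,\zeta}$ because the diamond operators at $p$ act on that subspace through $\epsilon$ times the trivial power of the cyclotomic character in weight one, so $T$ acts as $\zeta-1$. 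Your proposed verifications (the $q$-expansion principle, preservation of ordinarity, the action of the diamond operators matching $\zeta$) are exactly the right checks, but they should be carried out inside $\mathcal{S}$ rather than inside $V^{\ord}/P_{1,\zeta}V^{\ord}$; no lifting of forms to the family is then needed.
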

\begin{proof}
In Hida's first article \cite{hida_iwasawa_modules}, the universal ordinary Hecke algebra $\mathbf{H}^{\ord}(N; \calO_{K})$ is constructed as a limit of Hecke algebras acting on the spaces $S_{k}^{\ord}(\Gamma_{1}(N); K/\calO_{K})$.
These spaces of forms (or a suitable direct sum of these spaces allowing for divided congruences) are dense in the space $\mathcal{S}$ of all ordinary geometric $p$-adic modular forms, and so $\mathbf{H}^{\ord}(N; \calO_{K})$ can also be viewed as the Hecke algebra acting on this single large space of $p$-adic modular forms.
Any space of forms $S_{k}^{\ord}(\Gamma_{1}(Np^{r}); \calO_{K})$ can be viewed as a subspace of $\mathcal{S}$ by interpreting these classical forms of higher level as $p$-adic modular forms; in particular this holds for $k = 1$.
At the level of Hecke algebras, this means that we can realize the Hecke algebra of $S_{k}^{\ord}(\Gamma_{1}(Np^{r}); \calO_{K})$ as a quotient of the Hecke algebra on $\mathcal{S}$.
Decomposing the Hecke algebra on $S_{k}^{\ord}(\Gamma_{1}(Np^{r}); \calO_{K})$ as a direct sum corresponding to the various possible Nebentypus characters, we get the desired result.
\end{proof}

\begin{lemma}\label{lem:weight_one_hecke_field_bound}
Suppose that $f \in S_{1}(\Gamma_{1}(Np) \cap \Gamma_{0}(p^{r}), \epsilon; \calO_{K})$ is a classical eigenform of weight one arising as a specialization of $\mathbf{H}^{\ord}(N; \calO_{K})$.
Recall the finite extensions of $\Q$ defined using the Hecke eigenvalues of $f$:
\begin{align*}
\Q(\epsilon) 	& = \text{ the character field of $f$} \\
\Q(f)			& = \text{ the Hecke field of $f$}.
\end{align*}
Then we have that
\[
[\Q(f):\Q(\epsilon)] \leq \rank_{\Lambda}(\mathbf{H}^{\ord}(N; \calO_{K})).
\]
\end{lemma}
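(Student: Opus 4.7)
The plan is to turn the bound into a count of ring homomorphisms out of a finite module. The key inputs are the ``automatically ordinary'' phenomenon in weight one (\cref{cor:weight_one_conjugates}), the fact that weight one eigenforms are captured by the $\Lambda$-adic Hecke algebra (\cref{prop:hida_theory_weight_one}), and the freeness statement from \cref{thm:hida_theory_main_statement}.

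First I would count the Galois conjugates of $f$ over $\Q(\epsilon)$. By the same argument as in \cref{lem:hecke_field_galois_conjugates} applied over the base field $\Q(\epsilon)$, the number of such conjugates equals $[\Q(f):\Q(\epsilon)]$. Each $\sigma \in G_{\Q(\epsilon)}$ fixes every value of $\epsilon$, so in particular fixes $\zeta = \epsilon(1+p)$, and so $f^{\sigma}$ is a normalized eigenform lying in the same space $S_{1}(\Gamma_{1}(Np)\cap\Gamma_{0}(p^{r}), \epsilon; \Qbar_{p})$ as $f$. Because $f$ arises from the ordinary Hecke algebra, $a_{p}(f)$ is a unit and in particular nonzero, so \cref{cor:weight_one_conjugates} guarantees that each $f^{\sigma}$ is itself $p$-ordinary. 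Applying \cref{prop:hida_theory_weight_one}, each $f^{\sigma}$ then produces a ring homomorphism
\[
\psi_{\sigma}: \mathbf{H}^{\ord}(N; \calO_{K})/P_{1,\zeta}\mathbf{H}^{\ord}(N; \calO_{K}) \to \Qbar_{p},
\]
and these $\psi_{\sigma}$ are pairwise distinct because distinct conjugates of $f$ have distinct eigensystems.

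Next I would bound the number of such homomorphisms by an elementary dimension count. By \cref{thm:hida_theory_main_statement}, $\mathbf{H}^{\ord}(N; \calO_{K})$ is free over $\Lambda$ of rank $r := \rank_{\Lambda}(\mathbf{H}^{\ord}(N; \calO_{K}))$; reducing modulo $P_{1,\zeta}$ gives a free $\Lambda/P_{1,\zeta}$-module of rank $r$, and tensoring with the fraction field $F := \Frac(\Lambda/P_{1,\zeta})$ yields a commutative $F$-algebra $A$ of dimension $r$. Every $\psi_{\sigma}$ extends uniquely to an $F$-algebra homomorphism $A \to \Qbar_{p}$ (the map $\Lambda/P_{1,\zeta} \to \Qbar_{p}$ being the natural inclusion into $\Qbar_{p}$, through which all $\psi_{\sigma}$ factor). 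But a finite-dimensional commutative $F$-algebra admits at most $\dim_{F}(A) = r$ algebra homomorphisms to any field extension of $F$, since each such homomorphism factors through a distinct maximal ideal of $A$ and the number of maximal ideals is bounded by the $F$-dimension. Combining these two counts yields
\[
[\Q(f):\Q(\epsilon)] = \#\{f^{\sigma}\}_{\sigma \in G_{\Q(\epsilon)}} \leq r = \rank_{\Lambda}(\mathbf{H}^{\ord}(N; \calO_{K})),
\]
as desired.

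There is no serious obstacle; the only delicate bookkeeping is ensuring that the homomorphisms $\psi_{\sigma}$ all sit above the same point of $\Spec(\Lambda)$, which is exactly the observation that $\sigma \in G_{\Q(\epsilon)}$ fixes $\zeta$, and that \cref{prop:hida_theory_weight_one}'s surjection allows each ordinary weight one eigensystem to be lifted from the classical weight one Hecke algebra to $\mathbf{H}^{\ord}(N; \calO_{K})/P_{1,\zeta}$.
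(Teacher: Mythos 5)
Your proposal is correct and in substance follows the same route as the paper: the heart of both arguments is that Galois conjugation over $\Q(\epsilon)$ preserves the weight, the character, and (via \cref{cor:weight_one_conjugates}, i.e.\ the slope bound specialized to weight one) ordinarity, so every conjugate of $f$ is seen by the single weight-one specialization $\mathbf{H}^{\ord}(N;\calO_{K})/P_{1,\zeta}$ through \cref{prop:hida_theory_weight_one}, and the number of conjugates is then bounded by $\rank_{\Lambda}(\mathbf{H}^{\ord}(N;\calO_{K}))$. The only difference is bookkeeping: the paper counts the conjugates as linearly independent eigenvectors inside $S_{1}^{\ord}(\Gamma_{1}(Np)\cap\Gamma_{0}(p^{r}),\epsilon;\calO_{K})$, whereas you count the dual objects, namely $F$-algebra homomorphisms out of $A=(\mathbf{H}^{\ord}/P_{1,\zeta}\mathbf{H}^{\ord})\otimes_{\Lambda/P_{1,\zeta}}F$ with $F=\Frac(\Lambda/P_{1,\zeta})$; this is an equivalent, arguably cleaner, way to land exactly on $\rank_{\Lambda}$. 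One small repair is needed in your counting step: distinct $F$-algebra homomorphisms $A\to\Qbar_{p}$ need not factor through distinct maximal ideals (for instance $F[x]/(x^{2}-a)$ with $a$ a nonsquare has one maximal ideal but two homomorphisms into $\Qbar_{p}$, and conjugate eigensystems can likewise share a kernel), so "number of maximal ideals $\le \dim_{F}A$" does not by itself bound the number of homomorphisms. The bound $\#\Hom_{F\text{-alg}}(A,\Qbar_{p})\le\dim_{F}A$ is nonetheless true and standard: either invoke linear independence of characters, or count $\sum_{\frakm}\#\{F\text{-embeddings of }A/\frakm\text{ into }\Qbar_{p}\}\le\sum_{\frakm}[A/\frakm:F]\le\dim_{F}A$, the sums running over the maximal ideals of $A$. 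With that fix your argument is complete.
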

\begin{proof}
The degree $[\Q(f):\Q(\epsilon)]$ is equal to the number of distinct Galois conjugates of $f$ by the absolute Galois group $G_{\Q(\epsilon)}$ of $\Q(\epsilon)$.
Let us assume that our local coefficient field $K$ is large enough to contain $\Q(f)$ and all of its Galois conjugates.
Call these Galois conjugates $f_{1} = f, f_{2}, \ldots, f_{n}$.
Each $f_{i}$ is a classical weight one eigenform of the same level and character as $f$, i.e. each $f_{i} \in S_{1}(\Gamma_{1}(Np) \cap \Gamma_{0}(p^{r}), \epsilon; \calO_{K})$.

Crucially, we know that $f$ is ordinary since it is a specialization of an ordinary family $\mathbf{H}^{\ord}(N; \calO_{K})$.
As Galois conjugates of an eigenform with finite slope, each $f_{i}$ necessarily has finite slope.
But since the slope of a finite slope classical weight $k$ eigenform must be between $0$ and $k-1$ by \cref{thm:slope_bound}, we conclude that each $f_{i}$ is in fact ordinary, since $k - 1 = 0$ when $k = 1$.
So each $f_{i}$ is in the ordinary subspace $S_{1}^{\ord}(\Gamma_{1}(Np) \cap \Gamma_{0}(p^{r}), \epsilon; \calO_{K})$.
This entire space is a quotient of $\mathbf{H}^{\ord}(N; \calO_{K})/P_{1, \zeta} \mathbf{H}^{\ord}(N; \calO_{K})$ for some height one prime ideal $P_{1, \zeta}$ of $\Lambda$ by \cref{prop:hida_theory_weight_one}, so in total we have that
\begin{align*}
[\Q(f):\Q(\epsilon)]	& = \text{ the number of distinct Galois conjugates of $f$ by $G_{\Q(\epsilon)}$} \\
						& \leq \rank_{\calO_{K}}(S_{1}^{\ord}(\Gamma_{1}(Np) \cap \Gamma_{0}(p^{r}), \epsilon; \calO_{K}) \\
						& \leq \rank_{\calO_{K}}(\mathbf{H}^{\ord}(N; \calO_{K})/P_{1, \zeta} \mathbf{H}^{\ord}(N; \calO_{K})) \\
						& \leq \rank_{\Lambda}(\mathbf{H}^{\ord}(N; \calO_{K})).
\end{align*}
Note that the first inequality holds since distinct Galois conjugates of $f$ are linearly independent, as they lie in distinct eigenspaces for the action of the Hecke algebra.
\end{proof}

\begin{remark}
If we are willing to use the Ramanujan conjecture for classical weight one eigenforms, then it is likely that \cref{lem:weight_one_hecke_field_bound} already provides a sufficient input to prove our main result without appealing to the constructions of \cref{sec:rigidity} and \cref{sec:construction}.
The goal of \cref{sec:rigidity} and \cref{sec:construction} is to find a method by which the Hecke field bound of \cref{lem:weight_one_hecke_field_bound} can be propagated into regular weight, where \cref{thm:hida_cm_iff_hecke_field} may be applied.
We expect that \cref{thm:hida_cm_iff_hecke_field} can be adapted to require only that the forms in question satisfy the Ramanujan conjecture; see \cref{rem:hida_theorem_weight_one} for a discussion of adapting Hida's result to the weight one case.

Note that the Ramanujan conjecture \emph{is} known for weight one forms, having been proved by Deligne--Serre as a consequence of their construction of the Galois representations attached to weight one forms in \cite{deligne_serre}); our interest in finding a method which avoids using the Ramanujan conjecture is so that this strategy also applies to the case of partial weight one Hilbert modular forms, where the Ramanujan conjecture is still open.
\end{remark}

\begin{remark}
We remark that the principle encapsulated by \cref{lem:weight_one_hecke_field_bound} is unique to weight one.
For forms of weight $k \geq 2$ it is frequently the case that not all Galois conjugates of a given $p$-ordinary form are $p$-ordinary.
Indeed, one may think of Hida's characterization of CM families \cref{thm:hida_cm_iff_hecke_field} as saying that for non-CM ordinary eigenforms, the proportion of Galois conjugates which are also ordinary goes to $0$ as we increase the level.
\end{remark}

\subsection{Components with complex multiplication}\label{sec:hida_cm}

In this section we recall properties of the CM components of the $\Lambda$-adic ordinary Hecke algebra.
A good reference for these facts is Section 7 of \cite{hida_iwasawa_modules}.

We sketch the construction of CM components outlined by Hida in \cite{hida_iwasawa_modules}.
Let $E$ be an imaginary quadratic field in which our fixed prime $p$ splits as $(p) = \frakp \overline{\frakp}$.
Fix an integral ideal $\frakm$ of $E$ which is coprime to $\frakp$; this $\frakm$ will serve as the tame conductor of our CM components.
Let $W$ be the id\`{e}le class group of $E$ of conductor $\frakm\frakp^{\infty}$, that is
\[
W = \A_{E}^{\times}/\overline{U E_{\infty}^{\times} E^{\times}}
\]
where $U = \prod_{\ell \neq \frakp} U_{\ell}$, with $U_{\ell}$ the entire group of integral units of the completion $E_{\ell}$ if $\ell$ is coprime to $\frakm$, and $U_{\ell}$ being those integral units which are congruent to $1$ mod $\frakm$ if $\ell$ divides $\frakm$. 
The group $\Gamma = 1+p\Z_{p}$ injects into $U_{\frakp}$ which itself injects into $W$.
Moreover since the id\`{e}le class group of conductor $\frakm$ is finite we have that $\Gamma$ has finite index in $W$.

Let us assume that our coefficient ring $\calO_{K}$ is large enough to contains the values of all characters of the finite group $W/\Gamma$.
Define $A = \calO_{K}\llbracket W \rrbracket$ to be the $\calO_{K}$ group ring of $W$.
Then the inclusion $\Gamma \to W$ gives a map on group rings $\Lambda \to A$ which realizes $A$ as a finite free $\Lambda$-module.

For an algebraic Hecke character $\psi$ on $E$ of conductor $\frakm \frakp^{r}$ for some $r \geq 0$, we can view the $p$-adic avatar $\psi_{p}$ of $\psi$ as a continuous $p$-adic character of $W$.
Since our fixed prime $p$ splits in $E$, any eigenform $f$ with CM by $E$ will necessarily be $p$-ordinary.
If $\psi$ is an algebraic Hecke character inducing the CM eigenform $f_{\psi}$, we have that the map $A \to \calO_{K}$ corresponding to $\psi_{p}$ realizes the Hecke eigensystem of $f_{\psi}$ inside $\calO_{K}$.
Let $M = \Norm^{K}_{\Q}(\frakm)$, and let $-d$ be the discriminant of $E/\Q$.
In particular we have that
\[
a_{\ell}(f) = \begin{cases} 0 & \ell \text{ is inert in $E/\Q$} \\ \psi(\frakl) + \psi(\overline{\frakl}) & \ell \text{ splits as $\frakl \overline{\frakl}$ in $E/\Q$} \end{cases}
\]
for primes $\ell \nmid dMp$.
Since these quantities vary continuously with the character $\psi$, we can patch them together into a single map with coefficients in $A$.
Letting $\Psi: W \to A^{\times}$ be the tautological character, we have a map
\begin{align*}
\mathbf{H}^{\ord}(dM; \calO_{K}) 	& \to A \\
T_{\ell}					& \mapsto \begin{cases} 0 & \ell \text{ is inert in $E/\Q$} \\ \psi(\frakl) + \psi(\overline{\frakl}) & \ell \text{ splits as $\frakl \overline{\frakl}$ in $E/\Q$} \end{cases}. 
\end{align*}
This map is indeed a homomorphism of $\Lambda$-algebras since after composing with any of the Zariski dense specializations corresponding to characters $\psi: A \to \calO_{K}$ it realizes the map $\mathbf{H}^{\ord}(dM; \calO_{K}) \to \calO_{K}$ coming from the eigenform $f_{\psi}$.
The CM components of $\mathbf{H}^{\ord}(dM; \calO_{K})$ which have $CM$ by $E$ are those which are components of $A$.
The full details of this construction are presented in Theorem 7.1 of \cite{hida_iwasawa_modules}.

\begin{proposition}\label{prop:cm_component_unique}
Suppose that $\mathbf{I}$ is a reduced, irreducible component of $\mathbf{H}^{\ord}(N; \calO_{K})$.
If a CM eigenform of weight $k \geq 2$ arises as a specialization of $\mathbf{I}$, then $\mathbf{I}$ is a CM component, and in particular every specialization of $\mathbf{I}$ has CM by the same imaginary quadratic field.
\end{proposition}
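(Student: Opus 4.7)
The plan is to use the uniqueness of the component through which a given eigenform arises (\cref{thm:hida_etale}) together with the explicit construction of CM components sketched above. The key observation is that the hypotheses force us into a situation where both an \emph{a priori} unknown component $\mathbf{I}$ and a concrete CM component must both carry the eigensystem of the given CM form $f$.

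First I would set up the data attached to $f$. Write $f$ as a theta series $f = f_\psi$ for some algebraic Hecke character $\psi$ of $E$, of infinity type $\sigma^{k-1}$, with conductor of the form $\frakm \frakp^{r}$ for some $r \geq 0$ and some ideal $\frakm$ of $\calO_E$. Here a crucial point is that $p$ must split in $E/\Q$: since $f$ is a specialization of $\mathbf{H}^{\ord}(N; \calO_K)$ it is $p$-ordinary, so $a_p(f)$ is a unit; but if $p$ were inert in $E/\Q$ the CM hypothesis would give $a_p(f) = 0$, a contradiction. (If $p$ were ramified, a separate easy check using the shape of $\psi((\frakp))$ would also rule this out.) Thus $(p) = \frakp \overline{\frakp}$ and the conductor of $\psi$ lies in the range the construction before the proposition handles.

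Next I would apply that construction verbatim, feeding in $E$ and the prime-to-$p$ part $\frakm$ of the conductor of $\psi$. This produces a homomorphism $\mathbf{H}^{\ord}(dM; \calO_K) \to A = \calO_K\llbracket W \rrbracket$ (after possibly enlarging $\calO_K$ so it contains the values of characters of $W/\Gamma$), whose components are by definition the CM components with CM by $E$ of conductor dividing $\frakm$. Inflating from level $dM$ to level $N$ (which is a multiple of $dM$ since $f$ has level divisible by the conductor of $\psi$, hence by $dM$), one obtains a CM component $\mathbf{I}'$ of $\mathbf{H}^{\ord}(N; \calO_K)$. The $p$-adic avatar $\psi_p$ defines a specialization $A \to \overline{\Q}_p$, and by construction this specialization realizes the Hecke eigensystem of $f_\psi = f$ on $T_\ell$ for $\ell \nmid Np$ (and the matching $U$- and diamond-eigenvalues as well). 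Consequently $f$ arises from $\mathbf{I}'$. By \cref{thm:hida_etale} the component through which $f$ arises is unique, so $\mathbf{I} = \mathbf{I}'$, and $\mathbf{I}$ is a CM component with CM by $E$.

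The final clause — that every specialization of $\mathbf{I}$ has CM by the same imaginary quadratic field — then follows from the definition of a CM component: any arithmetic specialization of $\mathbf{I} \subseteq A$ comes from a continuous character $A \to \overline{\Q}_p$, i.e.\ from the $p$-adic avatar of an algebraic Hecke character $\psi'$ of $E$ with conductor dividing $\frakm\frakp^\infty$, and the corresponding eigensystem is that of the theta series $f_{\psi'}$, which has CM by $E$ by construction (primes inert in $E/\Q$ contribute zero Fourier coefficients). The main obstacle I expect in writing this out carefully is bookkeeping: matching the tame level $dM$ coming from the theta construction against the given level $N$, and matching Nebentypus and weight characters so that the specialization $A \to \overline{\Q}_p$ corresponding to $\psi_p$ really does lie over the point of $\Lambda$ cut out by the weight and character of $f$. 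Once these are aligned, the argument is just the combination of the explicit construction sketched above with the uniqueness statement of \cref{thm:hida_etale}.
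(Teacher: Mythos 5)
Your proposal is correct and follows essentially the same route as the paper: invoke the explicit CM component construction to produce a CM component specializing to $f$, then use the uniqueness statement of \cref{thm:hida_etale} to identify it with $\mathbf{I}$. The paper's own proof is just a terser version of this, and your additional observations (that $p$ must split in $E$ by ordinarity, and the level/character bookkeeping) are exactly the details being suppressed there.
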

\begin{proof}
Let $f$ be the CM eigenform arising from $\mathbf{I}$.
We know by \cref{thm:hida_etale} that there is a unique component of $\mathbf{H}^{\ord}(N; \calO_{K})$ giving rise to $f$.
By assumption this component is $\mathbf{I}$, however the construction above produces a CM component which specializes to any given CM eigenform.
Thus we must have that $\mathbf{I}$ itself is one of the CM components constructed above.
\end{proof}

We return to the description of Hecke fields, building an explicit description of the Hecke fields of CM components.
This is essentially a combination of \cref{lem:cm_hecke_fields} with the explicit description of CM components given above.

\begin{lemma}\label{lem:cm_family_hecke_fields}
Suppose that $\mathbf{I}$ is a reduced, irreducible component of $\mathbf{H}^{\ord}(N; \calO_{K})$.
Let $E$ be the imaginary quadratic field by which $\mathbf{I}$ has CM, and let $h$ be the class number of $E$.
Fix a weight $k \geq 2$.
There are elements $a_{1}, \ldots, a_{h}$ of $K$ such that any weight $k$ specialization $f$ of $\mathbf{I}$ has
\[
\Q(f) \subseteq E(\mu_{hNp^{\infty}}, a_{1}^{1/h}, \ldots, a_{h}^{1/h}).
\]
In particular the Hecke field of each weight $k$ form arising from $\mathbf{I}$ has its Hecke field contained within a fixed finite extension of the $p$-th cyclotomic field $\Q(\mu_{p^{\infty}})$.
\end{lemma}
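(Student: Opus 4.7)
The plan is to make uniform across the family the single-form analysis carried out in \cref{lem:cm_hecke_fields}, by exploiting the explicit construction of CM components outlined just before the lemma statement.

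First I would fix the relevant combinatorial data once and for all. The CM component $\mathbf{I}$ has a fixed tame conductor $\frakm$ (with $\Norm^{E}_{\Q}(\frakm) = M$ dividing $N$), and every weight $k$ specialization of $\mathbf{I}$ arises from an algebraic Hecke character $\psi$ of $E$ of infinity type $\sigma^{k-1}$ and conductor $\frakm\frakp^{r}$ for some $r \geq 0$. Choose ideal class representatives $\fraka_{1}, \ldots, \fraka_{h}$ of $\Cl(E)$ coprime to $\frakm\frakp$, and fix generators $\fraka_{i}^{h} = (\alpha_{i})$ with $\alpha_{i} \in \calO_{E}$. I would set
\[
a_{i} = \alpha_{i}^{k-1} \in E \subseteq K,
\]
noting that these do not depend on the particular specialization $\psi$ — this is the key observation that makes the bound uniform.

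Next I would run the argument from the proof of \cref{lem:cm_hecke_fields} applied to an arbitrary weight $k$ specialization $f$ coming from $\psi$. That proof shows $\Q(f) \subseteq E(\mu_{hM_{\psi}}, \psi(\fraka_{1}^{h})^{1/h}, \ldots, \psi(\fraka_{h}^{h})^{1/h})$, where $M_{\psi}$ is the norm of the full conductor $\frakm\frakp^{r}$ of $\psi$. Since $\alpha_{i}$ is coprime to $\frakm\frakp$, by the argument of that lemma we can write
\[
\psi(\fraka_{i}^{h}) = \psi((\alpha_{i})) = \alpha_{i}^{k-1} \cdot \zeta_{i}(\psi) = a_{i} \cdot \zeta_{i}(\psi),
\]
where $\zeta_{i}(\psi)$ is a root of unity whose order divides $hM p^{r}$, hence lies in $\mu_{hNp^{\infty}}$ (since $M \mid N$). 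Extracting $h$-th roots, any choice of $\psi(\fraka_{i}^{h})^{1/h}$ differs from the fixed $a_{i}^{1/h}$ by an $h$-th root of $\zeta_{i}(\psi)$, which is itself a root of unity of $p$-power-times-prime-to-$p$-bounded order.

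The final step is bookkeeping on the cyclotomic conductor: the $h$-th roots of elements of $\mu_{hNp^{\infty}}$ live in $\mu_{h^{2}Np^{\infty}}$, so absorbing this fixed finite amount (by enlarging the prime-to-$p$ part of the cyclotomic factor if necessary, which does not affect the conclusion since we only need containment in a fixed finite extension of $\Q(\mu_{p^{\infty}})$) one concludes
\[
\Q(f) \subseteq E(\mu_{hNp^{\infty}}, a_{1}^{1/h}, \ldots, a_{h}^{1/h})
\]
for every weight $k$ specialization $f$ of $\mathbf{I}$. I expect the main (minor) obstacle to be precisely this bookkeeping with $h$-th roots of roots of unity — checking that the cyclotomic conductor appearing in the statement actually suffices, or else noting that this constant can be harmlessly enlarged since the essential point (made in the ``In particular'' clause) is that the Hecke fields all sit inside a single finite extension of $\Q(\mu_{p^{\infty}})$ independent of $\psi$.
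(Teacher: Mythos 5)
Your argument is correct in substance, but it takes a slightly different route from the paper's. The paper fixes one weight $k$ specialization $f_{1}$ (with character $\psi_{1}$), takes $a_{i} = \psi_{1}(\fraka_{i}^{h})$ via \cref{lem:cm_hecke_fields}, and then compares every other specialization $f_{2}$ to $f_{1}$ through the ratio character $\psi_{1}\psi_{2}^{-1}$, which has trivial infinity-type and fixed tame conductor and hence finite $p$-power order; the discrepancy between the two sets of character values is therefore purely $p$-cyclotomic and gets absorbed into $\mu_{p^{\infty}}$, which is what makes the constant $\mu_{hNp^{\infty}}$ in the statement come out exactly. You instead make no reference specialization: you choose the $a_{i} = \alpha_{i}^{k-1}$ directly from generators of $\fraka_{i}^{h}$ and rerun the computation of \cref{lem:cm_hecke_fields} uniformly in $\psi$, using that every specialization has conductor dividing $\frakm\frakp^{\infty}$ and infinity-type $\sigma^{k-1}$. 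The price is the extra $h$-th roots of the tame roots of unity $\zeta_{i}(\psi)$, so you land in $E(\mu_{h^{2}Np^{\infty}}, a_{1}^{1/h}, \ldots, a_{h}^{1/h})$ rather than the field as stated; as you observe, this is harmless, since the only content ever used is that all weight $k$ Hecke fields lie in a single finite extension of $\Q(\mu_{p^{\infty}})$ independent of the specialization. What your version buys is completely explicit $a_{i}$ (independent of any choice of form) and no appeal to the $p$-power-order property of $\psi_{1}\psi_{2}^{-1}$; what the paper's comparison trick buys is the precise cyclotomic constant $\mu_{hNp^{\infty}}$ appearing in the statement. One small point to make explicit if you write this up: the tame part of the order of $\zeta_{i}(\psi)$ is bounded independently of the $p$-power level $r$ precisely because the tame conductor $\frakm$ is fixed across the component, and the $p$-part is swallowed by $\mu_{p^{\infty}}$; that is where the uniformity in $\psi$ actually resides.
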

\begin{proof}
Suppose that $f_{1}, f_{2}$ are any two (CM) forms of weight $k$ arising from $\mathbf{I}$, each as the theta series attached to an algebraic Hecke character $\psi_{1}, \psi_{2}$.
We know by the construction of $\mathbf{I}$ that the character $\psi_{1}\psi_{2}^{-1}$ has finite $p$-power order (it is an algebraic Hecke character of trivial infinity-type).
Pick $a_{1}, \ldots, a_{h}$ as in \cref{lem:cm_hecke_fields} as applied to the form $f_{1}$.
Since the character $\psi_{1}\psi_{2}^{-1}$ is finite order and moreover has $p$-power order, we see that in the presence of all $p$-power roots of unity (and the required ``tame'' roots of unity of order $hN$) these same $a_{i}$ generate over $E$ a field containing the Hecke field of $f_{2}$.
Since we could take $f_{2}$ to be any form of weight $k$ arising from $\mathbf{I}$, we see that the field $E(\mu_{hNp^{\infty}}, a_{1}^{1/h}, \ldots, a_{h}^{1/h})$ contains the Hecke field of any weight $k$ specialization of $\mathbf{I}$.
\end{proof}

\begin{remark}
Hida's characterization of CM families, stated in this article as \cref{thm:hida_cm_iff_hecke_field}, can be interpreted as a converse of \cref{lem:cm_family_hecke_fields}.
\Cref{lem:cm_family_hecke_fields} shows that the Hecke fields of forms arising from a CM component are uniformly controlled.
Hida's result \cref{thm:hida_cm_iff_hecke_field} shows that any component of $\mathbf{H}^{\ord}(N; \calO_{K})$ which has sufficiently controlled Hecke fields in a single weight must be a CM component.
It is worth noting that \cref{thm:hida_cm_iff_hecke_field} is much weaker than requiring that a component has uniformly controlled Hecke fields; rather it only requires that for a density $1$ set of primes $\ell$, the degree of the ``$\ell$-Hecke field'' $\Q(a_{\ell}(f))$ remains bounded over $\Q(\mu_{p^{\infty}})$ as one varies over forms $f$ of a fixed weight which arise from $\mathbf{I}$.

Under the assumption that the family in question has infinitely many classical forms of low weight, we establish this boundedness of Hecke fields across the entire family using the special properties of Hecke fields in low weight as embodied by \cref{lem:weight_one_hecke_field_bound}, along with the rigidity principles of \cref{sec:rigidity} and construction of \cref{sec:construction} to extend from low weight to regular weight.
\end{remark}
\section{Rigidity principles for $p$-adic power series}\label{sec:rigidity}

In this section we prove rigidity results for integral extensions of $p$-adic power series rings.
These results will be used used to propagate the boundedness of Hecke fields in low weight to regular weight, where the Ramanujan conjecture is known and Hida's theorem (stated as \cref{thm:hida_cm_iff_hecke_field}) relating boundedness of Hecke fields and complex multiplication may be applied.
In particular the boundedness of Hecke fields in low weight is what motivates the conditions of \cref{thm:many_roi_rigidity}; see \cref{sec:big_rep_rigidity} for the application of this theorem to the coefficients of the characteristic polynomial of Frobenius elements in a high-dimensional representation of the absolute Galois group of $\Q$.

We fix the following notation for use in this section.
\begin{itemize}
\item $K$ is a finite extension of $\Q_{p}$, with ring of integers $\calO$, uniformizer $\pi$, and residue field $\F$.
\item $\C_{p}$ is the completion of an algebraic closure of $K$, $\calO_{\C_{p}}$ is the integral closure of $\calO$ within $\C_{p}$, and $\frakm_{\C_{p}}$ is the maximal ideal of $\calO_{\C_{p}}$.
\item As in \cref{sec:notation} $\ord_{p}$ is the valuation on $K$ and extensions thereof, normalized so that $\ord_{p}(p) = 1$.
\item $\Lambda = \calO \llbracket T \rrbracket$ is the ring of formal power series in one variable with coefficients in $\calO$.
\item $M$ is the integral closure of $\Lambda$ in some finite extension of $\Frac(\Lambda)$.
\end{itemize}

\subsection{Weierstrass preparation and Newton polygons}\label{sec:weierstrass_and_newton}

The Weierstrass preparation theorem and the theory of Newton polygons will be the basic tools we use to describe the behaviour of elements of $\Lambda$ and $M$.
Recall that a distinguished polynomial $f(T) \in \calO[T] \subset \Lambda$ is a monic polynomial such that every coefficient other than the leading one is divisible by the uniformizer $\pi$.
With this notion we can state the $p$-adic Weierstrass preparation theorem.

\begin{theorem}[$p$-adic Weierstrass preparation]\label{thm:weierstrass_prep}
If $F(T) \in \Lambda$ is non-zero there is a unique way to write it as
\[
F(T) = \pi^{k}f(T)u(T)
\]
where $k \geq 0$ is an integer, $f(T)$ is a distinguished polynomial, and $u(T)$ is a unit in $\Lambda$ (in other words, the constant term of $u$ is an element of $\calO^{\times}$).
\end{theorem}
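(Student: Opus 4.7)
The plan is to first peel off the largest power of $\pi$ common to all coefficients, then find the distinguished polynomial factor via a Weierstrass-style division algorithm. I would write $F = \pi^{k} G$ where $k$ is the largest integer such that $\pi^{k}$ divides every coefficient of $F$; then $G \in \Lambda$ has at least one coefficient in $\calO^{\times}$, so the reduction $\overline{G} \in \F\llbracket T \rrbracket$ is non-zero. Let $n$ be the smallest index $i$ such that the $T^{i}$-coefficient of $G$ is a unit, so that $\overline{G} = T^{n} \overline{U}$ for some $\overline{U} \in \F\llbracket T \rrbracket$ with $\overline{U}(0) \neq 0$. The goal is then to produce $G = f u$ with $f$ a degree-$n$ distinguished polynomial and $u \in \Lambda^{\times}$.

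The technical heart is a division algorithm: for $G$ as above and any $H \in \Lambda$, there exist unique $q \in \Lambda$ and a polynomial $r \in \calO[T]$ of degree less than $n$ with $H = qG + r$. To prove this, I would decompose $G = P + T^{n} U$ where $P$ collects the coefficients of $G$ of degree less than $n$ (so $P \in \pi \Lambda$ by choice of $n$) and $U \in \Lambda^{\times}$. Given $H$, write $H = H_{<n} + T^{n} H'$ and set $q_{0} = U^{-1} H'$; then $q_{0} G = q_{0} P + T^{n} H' = q_{0} P + (H - H_{<n})$, so $H = q_{0} G + H_{<n} - q_{0} P$ with error $-q_{0} P \in \pi \Lambda$. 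Iterating on the error term, each successive correction lies in a higher power of $\pi$, and by $\pi$-adic completeness of $\Lambda$ the construction converges to produce $q \in \Lambda$ and $r \in \calO[T]$ of degree less than $n$ with $H = qG + r$. For uniqueness, if $qG + r = 0$ with $\deg r < n$, then reducing modulo $\pi$ gives $\overline{q}\, T^{n} \overline{U} + \overline{r} = 0$ in $\F\llbracket T \rrbracket$; matching powers of $T$ and using $\overline{U}(0) \neq 0$ forces $\overline{q} = \overline{r} = 0$, so $q, r \in \pi \Lambda$, and a descent on powers of $\pi$ shows $q = r = 0$.

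To extract the Weierstrass factorization, I would apply the division algorithm with $H = T^{n}$, producing $T^{n} = qG + r$ with $\deg r < n$. Reducing modulo $\pi$ yields $T^{n} = \overline{q}\, T^{n} \overline{U} + \overline{r}$; since $\deg \overline{r} < n$, matching coefficients in $\F\llbracket T \rrbracket$ forces $\overline{r} = 0$ and $\overline{q}\, \overline{U} = 1$. Hence $r \in \pi \calO[T]$, so $f := T^{n} - r$ is a distinguished polynomial of degree $n$, and $\overline{q}(0) \neq 0$, so $u := q^{-1} \in \Lambda^{\times}$; rearranging gives $G = fu$ and therefore $F = \pi^{k} f u$. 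Uniqueness of the overall decomposition is then immediate: $k$ is pinned down as the minimum $\pi$-adic valuation of the coefficients of $F$, $n$ is the order of vanishing of $\overline{F/\pi^{k}}$ at $T = 0$, and the factorization $G = fu$ is determined by the uniqueness clause of the division algorithm applied to $T^{n}$.

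The main obstacle is establishing the division algorithm and in particular verifying that the iterative construction yields convergent sums inside $\Lambda$. This hinges entirely on $\pi$-adic completeness of $\Lambda$ together with the observation that each iteration strips off one additional power of $\pi$ from the error term; once this is in place, both existence and uniqueness of the Weierstrass factorization are formal consequences of bookkeeping with reductions modulo $\pi$.
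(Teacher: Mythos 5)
Your proof is correct and complete: stripping off $\pi^{k}$, establishing the Weierstrass division lemma by $\pi$-adic iteration and completeness of $\Lambda$, and then dividing $T^{n}$ by $G$ is the standard argument, and your uniqueness reductions (via reduction mod $\pi$ and descent on powers of $\pi$) are sound. The paper does not prove this theorem itself but cites Lang's \emph{Cyclotomic Fields}, where essentially this same division-algorithm proof is given, so your route coincides with the referenced one.
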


For a proof of the $p$-adic Weierstrass preparation theorem, see \cite{lang_cyclotomic_fields}, Chapter 5, Section 2, Theorem 2.2.

\begin{lemma}\label{lem:weierstrass_zero}
If $F(T) \in \Lambda$ is non-zero, then for any $t \in \frakm_{\C_{p}}$ (i.e. $t \in \C_{p}$ and $\ord_{p}(t) > 0$) the series $F(t)$ converges in $\C_{p}$.
Moreover such $F(T)$ have only finitely many roots $t \in \frakm_{\C_{p}}$.
\end{lemma}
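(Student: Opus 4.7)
The plan is to prove the two claims separately, with the convergence being straightforward from non-Archimedean analysis and the finiteness following from Weierstrass preparation (\cref{thm:weierstrass_prep}).

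For the convergence claim, write $F(T) = \sum_{n=0}^{\infty} a_{n}T^{n}$ with each $a_{n} \in \calO$, so $\ord_{p}(a_{n}) \geq 0$. For $t \in \frakm_{\C_{p}}$ we have $\ord_{p}(t) > 0$, hence
\[
\ord_{p}(a_{n}t^{n}) \geq n \ord_{p}(t) \to \infty
\]
as $n \to \infty$. Because $\C_{p}$ is complete and non-Archimedean, a series with terms going to zero converges, giving the first claim.

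For the finiteness claim, I would apply \cref{thm:weierstrass_prep} to factor $F(T) = \pi^{k} f(T) u(T)$ with $f$ a distinguished polynomial and $u \in \Lambda^{\times}$. The main work is to show that $u$ is nowhere zero on $\frakm_{\C_{p}}$. Writing $u(T) = c + T g(T)$ with $c \in \calO^{\times}$ and $g \in \Lambda$, for any $t \in \frakm_{\C_{p}}$ the first part of this proof shows that $g(t)$ converges in $\calO_{\C_{p}}$, so $t g(t) \in \frakm_{\C_{p}}$. Therefore $u(t) = c + tg(t)$ has $\ord_{p}(u(t)) = \ord_{p}(c) = 0$, so $u(t) \neq 0$.

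Since $\pi^{k} \neq 0$ and $u(t) \neq 0$ for all $t \in \frakm_{\C_{p}}$, the roots of $F$ in $\frakm_{\C_{p}}$ coincide with the roots of the polynomial $f(T)$ in $\frakm_{\C_{p}}$, of which there are at most $\deg f$. The main conceptual obstacle is the non-vanishing of $u$, but this is routine once one unpacks what it means for $u$ to be a unit in $\Lambda$; no deeper ingredients are needed beyond Weierstrass preparation.
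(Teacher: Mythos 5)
Your proof is correct and takes essentially the same approach as the paper: Weierstrass preparation plus the observation that the unit factor has valuation zero (hence never vanishes) at any $t$ with $\ord_{p}(t)>0$, so the roots of $F$ in the open unit disc are exactly the finitely many roots of the distinguished polynomial. The only cosmetic difference is that you prove convergence of $F(t)$ directly from the coefficients lying in $\calO$, while the paper deduces it from the factorization; both are fine.
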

\begin{proof}
Let $F(T) = \pi^{k}f(T)u(T)$ in Weierstrass preparation.
If $\ord_{p}(t) > 0$, then $u(t)$ converges since $\ord_{p}(t^{n}) = n\cdot\ord_{p}(t)$ goes to infinity with $n$, and $\ord_{p}(u(t)) = 0$ since the unit constant term of $u(T)$ dominates the norm of any term involving $t$.
Thus $F(t) = \pi^{k}u(t)f(t)$ converges since $f$ is a polynomial and $u$ converges at $t$.
Finally we see that since $u(t)$ is always a unit, we have $F(t) = 0$ if and only if $f(t) = 0$, and $f$ necessarily has finitely many roots in $\C_{p}$ as it is a polynomial.
\end{proof}

\begin{lemma}\label{lem:weierstrass_small_cont}
Let $F(T)$ have Weierstrass preparation $F(T) = \pi^{k}f(T)u(T)$ where $f(T)$ has degree $d$.
If $t \in \frakm_{\C_{p}}$ with $0 < \ord_{p}(t) < \frac{\ord_{p}(\pi)}{d}$, then
\[
\ord_{p}(F(t)) = k \cdot \ord_{p}(\pi) + d \cdot \ord_{p}(t).
\]
\end{lemma}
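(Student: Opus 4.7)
The plan is to directly compute $\ord_p$ of each of the three factors $\pi^k$, $u(t)$, and $f(t)$ in the Weierstrass decomposition and add them. The factor $\pi^k$ contributes $k \cdot \ord_p(\pi)$ trivially, so the content is in handling the other two.

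For the unit $u(T) = \sum_{n \geq 0} c_n T^n$, since $u$ is a unit in $\Lambda$ its constant term $c_0$ lies in $\calO^\times$, so $\ord_p(c_0) = 0$. For $n \geq 1$, we have $\ord_p(c_n t^n) \geq n \cdot \ord_p(t) > 0 = \ord_p(c_0)$, so the constant term strictly dominates every other term and $\ord_p(u(t)) = 0$. (Convergence at $t \in \frakm_{\C_p}$ was already recorded in \cref{lem:weierstrass_zero}.)

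For the distinguished polynomial $f(T) = T^d + a_{d-1}T^{d-1} + \cdots + a_0$, the defining property is that each $a_i$ with $i < d$ is divisible by $\pi$, hence $\ord_p(a_i) \geq \ord_p(\pi)$. The crucial strict inequality I want to establish is that the leading term $t^d$ dominates all other terms: that is, $\ord_p(t^d) < \ord_p(a_i t^i)$ for every $i < d$. Rearranging, this is the inequality $(d-i) \cdot \ord_p(t) < \ord_p(\pi)$. Because $d - i \leq d$, this follows from the hypothesis $\ord_p(t) < \ord_p(\pi)/d$. Hence the ultrametric inequality is strict and $\ord_p(f(t)) = d \cdot \ord_p(t)$.

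Combining the three contributions gives $\ord_p(F(t)) = k \cdot \ord_p(\pi) + d \cdot \ord_p(t)$, as claimed. There is no real obstacle here; the only thing to be careful about is that the hypothesis $\ord_p(t) < \ord_p(\pi)/d$ is exactly what is needed to make the leading term of $f$ dominate strictly, so that the ultrametric equality (rather than just $\geq$) holds.
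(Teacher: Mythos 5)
Your proof is correct and follows essentially the same route as the paper: split the valuation along the Weierstrass factorization, note the unit's constant term forces $\ord_{p}(u(t)) = 0$, and use the hypothesis $\ord_{p}(t) < \ord_{p}(\pi)/d$ to make the leading term of the distinguished polynomial strictly dominate. Your spelled-out inequality $(d-i)\cdot\ord_{p}(t) < \ord_{p}(\pi) \leq \ord_{p}(a_{i})$ is just a slightly more careful rendering of the same comparison the paper makes.
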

\begin{proof}
We compute the valuation of $F(t)$ using its Weierstrass preparation
\begin{align*}
\ord_{p}(F(t))	& = \ord_{p}(\pi^{k}) + \ord_{p}(u(t)) + \ord_{p}(f(t)).
\end{align*}
We have that $\ord_{p}(\pi^{k}) = k \cdot \ord_{p}(\pi)$, and $\ord_{p}(u(t)) = 0$ since the unit constant term dominates the norm.
Finally we have that $\ord_{p}(f(t)) = d \cdot \ord_{p}(t)$ since the leading term $t^{d}$ has smaller valuation than any of the other terms of $f(t)$, as $d \cdot \ord_{p}(t) < \ord_{p}(\pi)$ and every other term has valuation at least $\ord_{p}(\pi)$ since $f(T)$ is a distinguished polynomial.
\end{proof}

We recall the construction of Newton polygons for polynomials over $\C_{p}$.
Suppose that $f(X) = \sum_{i=0}^{d} a_{i}X^{i}$ in $\C_{p}[X]$.
We plot the points $(d-i, \ord_{p}(a_{i}))$ in the plane (allowing points ``at $\infty$'' if some of the coefficients $a_{i}$ are $0$ and hence have infinite valuation), and form their lower convex hull.
The resulting set of line segments in the plane is called the \emph{Newton polygon} of $f$.
The usefulness of Newton polygons lies in the fact that this simple combinatorial construction gives us total knowledge of the valuations of the roots of $f$.


\begin{theorem}\label{thm:newton_polygon}
Suppose that the Newton polygon of $f$ consists of $n$ line segments, with the $i$-th segment having horizontal length $\ell_{i}$ and slope $m_{i}$.
If there is a line segment of infinite slope it must occur at the end, and in that case we consider the length $\ell_{i}$ to be such that $X^{\ell_{i}}$ divides $f$ exactly.
Then for each $i$ in the range $1 \leq i \leq n$ there are $\ell_{i}$ roots of $f$ which have valuation equal to $m_{i}$.
\end{theorem}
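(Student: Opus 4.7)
The plan is to compute the valuations of the coefficients of $f$ directly from the valuations of its roots in $\C_p$ via Vieta's formulas, and then match these to the combinatorial data of the Newton polygon. Since $\C_p$ is algebraically closed and $f \in \C_p[X]$, I would factor $f(X) = a_d \prod_{j=1}^{d}(X - \alpha_j)$ where $d = \deg f$. The roots equal to zero contribute exactly an $X^{\ell_n}$ factor, accounting for the single possible infinite-slope segment at the end; after stripping this off I may assume all $\alpha_j$ are non-zero and work only with the finite-slope segments.

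Next, I would sort the roots so that $v_1 \leq v_2 \leq \dots \leq v_d$ where $v_j = \ord_p(\alpha_j)$, and group them by common valuation: the distinct values $m_1 < m_2 < \dots < m_n$ occur with multiplicities $\ell_1, \dots, \ell_n$. Vieta gives $a_{d-k} = (-1)^k a_d \, e_k(\alpha_1, \dots, \alpha_d)$, where $e_k$ denotes the $k$-th elementary symmetric polynomial, and the ultrametric inequality yields
\[
\ord_p(a_{d-k}) \geq \ord_p(a_d) + S_k, \qquad S_k := v_1 + \dots + v_k,
\]
so every plotted point $(k, \ord_p(a_{d-k}))$ lies on or above the piecewise linear graph of $k \mapsto \ord_p(a_d) + S_k$.

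The main step is to show that at the transition indices $k = \ell_1 + \dots + \ell_i$ (with $S_0 := 0$) this inequality is in fact an equality. At such a $k$ we have $v_k < v_{k+1}$ (or $k = d$), so the monomial $\alpha_1 \cdots \alpha_k$ has strictly smaller valuation than every other monomial of $e_k$, since any other monomial trades some $\alpha_j$ with $j \leq k$ for some $\alpha_{j'}$ with $j' > k$, strictly raising the valuation. Hence the plotted points at these transition indices sit exactly on the graph of $S_k$. Since $k \mapsto S_k$ is convex (its successive increments $v_{k+1}$ are non-decreasing), the polygonal arc through the transition points is itself convex, and by the inequality above no other plotted point lies below it, so it is precisely the lower convex hull.

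Reading off the segments, the piece from $k = \ell_1 + \dots + \ell_{i-1}$ to $k = \ell_1 + \dots + \ell_i$ has horizontal length $\ell_i$ and vertical rise $\sum_{j = \ell_1 + \dots + \ell_{i-1} + 1}^{\ell_1 + \dots + \ell_i} v_j = \ell_i m_i$, hence slope $m_i$; this matches $\ell_i$ roots of valuation $m_i$, as desired. There is no serious obstacle here; the one subtle point is that the inequality $\ord_p(e_k) \geq S_k$ can be strict at non-transition $k$, so one truly needs the convexity of $k \mapsto S_k$ (rather than equality at every $k$) to conclude that the intermediate plotted points do not spawn additional vertices of the lower convex hull.
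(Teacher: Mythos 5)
Your argument is correct and complete: the Vieta/ultrametric inequality $\ord_p(a_{d-k}) \geq \ord_p(a_d) + S_k$, exact equality at the transition indices (where $v_k < v_{k+1}$ makes $\alpha_1\cdots\alpha_k$ the unique monomial of minimal valuation), and the convexity of $k \mapsto S_k$ together identify the lower convex hull with the arc through the transition points, and stripping off the zero roots handles the infinite-slope segment exactly as the statement prescribes. The paper itself offers no proof of this theorem, deferring to Koblitz, and your write-up is the standard elementary argument for the polynomial case under the paper's convention (slopes equal to root valuations), with the one genuinely delicate point --- that equality can fail at non-transition indices, so convexity rather than pointwise equality must carry the conclusion --- correctly identified and addressed.
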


See Chapter 3, Section 3 of \cite{koblitz_p-adic} for more information on the theory of Newton polygons.
Note that Koblitz's convention for Newton polygons is slightly different from ours; his Newton polygons are vertical reflections of ours.
Koblitz's convention has the benefit of also applying easily to power series, at the drawback that the slopes of the polygon correspond to inverses of the valuations of the roots.
Our convention is chosen so that the slopes are the valuations, and we won't need to use Newton polygons for non-polynomial power series.

We will use Newton polygons to study specializations of elements of $M$, where $M$ is the integral closure of $\Lambda$ in a finite extension of $\Frac(\Lambda)$.
Suppose that we have a ring homomorphism $P: M \to \C_{p}$ which extends the ring homomorphism $P_{t}: \Lambda \to \C_{p}$ given by $T \mapsto t \in \frakm_{\C_{p}}$.
In a slight abuse of notation we call $P$ a $\C_{p}$-valued point of $M$ (rather than of $\Spec(M)$).
Given $F \in M$, we write $F(P)$ rather than $P(F)$, thinking of $F$ as an ``algebraic'' analytic function, to align with how we think of elements of $\Lambda$ as analytic functions.
Note that if $F \in \Lambda$, $F(P_{t})$ is simply the power series $F$ evaluated at $t$.

\begin{remark}\label{rem:integral_weierstrass_small_cont}
Suppose that we're given $F \in M$, and $P$ is a $\C_{p}$ point of $M$, extending the $\C_{p}$ point $P_{t}$ of $\Lambda$.
If $R(T, X)$ is a monic irreducible polynomial satisfied by $F$, we have that $R(T, F) = 0$ in $M$, and so also $R(t, F(P)) = 0$ in $\C_{p}$.
By computing the Newton polygon of $R(t, X)$ we can obtain the valuation of $F(P)$; in particular for $t \in \frakm_{\C_{p}}$ of sufficiently small valuation we get that each coefficient of $R(t, X)$ has valuation of the form $d_{i} \cdot \ord_{p}(t) + k_{i} \cdot \ord_{p}(\pi)$ as in \cref{lem:weierstrass_small_cont}.
We then have that for $\ord_{p}(t)$ sufficiently small, $\ord_{p}(F(P)) = a \cdot \ord_{p}(t) + b$ for some positive rational $a, b$.
Of course since all of the valuations involved are rational there is always some choice of $a$ and $b$ making the above statement true; the point is that the Newton polygon produces such a choice for us, and those $a$ and $b$ can computed from the Weierstrass preparations of the coefficients of $R(T, X)$.
\end{remark}

\begin{lemma}\label{lem:integral_weierstrass_zero}
Let $F$ be an element of $M$.
Suppose that there is an infinite set $S$ of $\C_{p}$ points of $M$, each extending points $P_{t}$ of $\Lambda$, such that $F(P) = 0$ for all $P \in S$.
Then $F = 0$.
\end{lemma}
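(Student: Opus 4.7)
I would argue by contraposition: assuming $F \in M$ is nonzero, I will show the zero set $\{P : F(P) = 0\}$ must be finite. Since $M$ is the integral closure of $\Lambda$ in a finite extension of $\Frac(\Lambda)$, and $\Lambda$ is a UFD (hence integrally closed), $F$ has a monic minimal polynomial
\[
R(T,X) = X^{n} + a_{n-1}(T) X^{n-1} + \cdots + a_{1}(T) X + a_{0}(T) \in \Lambda[X]
\]
over $\Frac(\Lambda)$, with $R(T,F) = 0$ in $M$.

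The first key step is to observe that $a_{0}(T) \neq 0$: if $a_{0} = 0$ then $X \mid R$, and irreducibility forces $R = X$, i.e.\ $F = 0$, contrary to assumption. Now for any $\C_{p}$-point $P$ of $M$ extending $P_{t}: \Lambda \to \C_{p}$, the identity $R(T,F) = 0$ specializes to $R(t, F(P)) = 0$ in $\C_{p}$. If further $F(P) = 0$, then the relation collapses to $a_{0}(t) = 0$.

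The second key step is to pass from infinitely many vanishing points $P \in S$ to infinitely many vanishing $t$. Because $M$ is a finite $\Lambda$-module and in particular $\Frac(M)/\Frac(\Lambda)$ is a finite extension of degree $d$, any ring homomorphism $\Lambda \to \C_{p}$ admits at most $d$ extensions to a ring homomorphism $M \to \C_{p}$. Thus the fiber of $S$ over each $t$ has size at most $d$, and so the set of values $t$ arising as $P \in S$ runs over $S$ is infinite.

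Finally, we conclude via the Weierstrass machinery already set up. Since $a_{0}(T) \in \Lambda$ is nonzero, \cref{lem:weierstrass_zero} tells us that $a_{0}$ has only finitely many zeros in $\frakm_{\C_{p}}$. But the previous step produces infinitely many such zeros, a contradiction. Hence $F = 0$, as desired. I expect no step to be a real obstacle here; the only subtlety is remembering that infinitely many $\C_{p}$-points of $M$ really do lie over infinitely many $\C_{p}$-points of $\Lambda$, which is immediate from the finiteness of $\Frac(M)/\Frac(\Lambda)$.
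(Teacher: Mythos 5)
Your proposal is correct and is essentially the same argument as the paper's: both pass to a monic irreducible polynomial $R(T,X) \in \Lambda[X]$ satisfied by $F$, specialize to get $a_{0}(t) = 0$ at the infinitely many $t$ lying under points of $S$ (using that each point of $\Lambda$ lifts to only finitely many points of $M$), and invoke \cref{lem:weierstrass_zero} together with irreducibility to force $F = 0$. The only difference is cosmetic — you run it by contradiction where the paper argues directly, and you spell out why the minimal polynomial lies in $\Lambda[X]$ — so there is nothing to change.
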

\begin{proof}
Suppose that $R(T, X) \in \Lambda[X]$ is a monic irreducible polynomial which $F$ satisfies.
Since $R(t, F(P)) = 0$ for any $P \in S$, the constant term $a_{0}(T)$ of $R(T, X)$ must satisfy $a_{0}(t) = 0$ for each $t$ which one of the points of $S$ lifts.
Since each point $P_{t}$ of $\Lambda$ extends to at most finitely many points of $M$, there must be infinitely many such $t$.
By \cref{lem:weierstrass_zero} since the constant term of $R(T, X)$ has infinitely many roots $t$ in $\frakm_{\C_{p}}$ it must be $0$.
Since $R(T, X)$ is irreducible by assumption, we must have that $R(T, X) = X$, and hence $F = 0$.
\end{proof}

\begin{lemma}\label{lem:polygon_continuity}
Let $R(T, X) \in \Lambda[X]$.
For all $t \in \frakm_{\C_{p}}$ with $\ord_{p}(t)$ sufficiently small, the vertices of the Newton polygon of $R(t, X)$ occur at the same indices.
\end{lemma}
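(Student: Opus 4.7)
The plan is to express, for each coefficient of $R(T,X) = \sum_{i=0}^{d} a_i(T) X^i$, the valuation $\ord_p(a_i(t))$ as an explicit affine function of $s := \ord_p(t)$ valid for small $s$, and then to observe that the vertex set of the resulting Newton polygon depends on the $y$-coordinates only through finitely many linear conditions in $s$, each of which has constant sign on a sufficiently small punctured neighbourhood of $0$.

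First, for each index $i$ with $a_i(T) \neq 0$, I would apply \cref{thm:weierstrass_prep} to write $a_i(T) = \pi^{k_i} f_i(T) u_i(T)$ with $f_i$ distinguished of degree $d_i$. By \cref{lem:weierstrass_small_cont}, for every $t \in \frakm_{\C_{p}}$ satisfying $0 < \ord_p(t) < \ord_p(\pi)/\max_i d_i$, one has the exact formula
\[
\ord_p(a_i(t)) = k_i \ord_p(\pi) + d_i s, \qquad s := \ord_p(t).
\]
For the (fixed) set of indices with $a_i(T) = 0$, the value $a_i(t) = 0$ identically, so the corresponding Newton polygon ``point'' sits at $+\infty$ regardless of $t$; such indices never contribute vertices and may be excluded from the analysis.

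Next, I would observe that the set of vertex indices of the Newton polygon of $R(t, X)$ is determined by the following finite collection of comparisons: for each triple $j < i < k$ with $a_j, a_k$ nonzero, compare the point $(d-i, \ord_p(a_i(t)))$ to the line through $(d-j, \ord_p(a_j(t)))$ and $(d-k, \ord_p(a_k(t)))$ (together with analogous comparisons for endpoints). Substituting the affine formulas from the previous step, each such comparison becomes a linear condition of the form $\alpha + \beta s \geq 0$ (or $>$, or $=$) with $\alpha, \beta \in \Q$ depending only on the Weierstrass data of the $a_i$.

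Finally, each such linear expression has constant sign on a sufficiently small punctured interval $(0, \varepsilon)$: if $\alpha \neq 0$ the sign equals $\operatorname{sign}(\alpha)$, while if $\alpha = 0$ the sign is identically zero when $\beta = 0$ and equals $\operatorname{sign}(\beta)$ otherwise. Choosing $\varepsilon$ small enough that all finitely many such inequalities have stabilised to their limiting sign and that $\varepsilon < \ord_p(\pi)/\max_i d_i$, the vertex indices of the Newton polygon of $R(t,X)$ are independent of $t$ for $0 < \ord_p(t) < \varepsilon$, which is the desired conclusion. I expect the only real obstacle to be the bookkeeping required to handle zero coefficients and boundary (endpoint) cases cleanly; the substantive mathematical input is simply that linear functions of $s$ have constant sign near $s = 0^+$.
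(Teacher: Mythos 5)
Your proposal is correct and follows essentially the same route as the paper: both reduce, via Weierstrass preparation and \cref{lem:weierstrass_small_cont}, to the observation that for small $s = \ord_{p}(t)$ the coefficient valuations are affine functions of $s$, and that vertexhood in the Newton polygon is cut out by finitely many affine conditions which have constant sign on a small punctured interval $(0,\varepsilon)$. The paper merely packages this last step geometrically, as an affine line in $\R^{d}$ meeting the boundaries of a half-space stratification only finitely often, which is the same sign-stability argument you give directly.
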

\begin{proof}
The Newton polygon of a monic degree $d$ polynomial is completely determined by the set of valuations of the coefficients.
Thinking about the set of valuations as living in $\R^{d}$, we have a stratification of $\R^{d}$ according to which vertices lie in the Newton polygon.
The condition of an index contributing a vertex to the Newton polygon is given by a collection of linear inequalities; in other words the set of valuations having a given vertex in the Newton polygon is a finite intersection of half-spaces in $\R^{d}$.
The boundary of these half-spaces correspond to multiple vertices lying on the same line segment of the Newton polygon.
Note that even though the valuation map takes values in $\Q \cup \{\infty\}$ rather than $\R$ we are simply working with the defining inequalities over $\R$, and if needs be maybe we may replace any infinite valuations with sufficiently large non-infinite valuations without affecting any of the arguments.

We know that for $t \in \C_{p}$ with $\ord_{p}(t)$ sufficiently small, the coefficients of $R(t, X)$ have valuation of the form $a\cdot\ord_{p}(t) + b$ by \cref{lem:weierstrass_small_cont}.
Say that the $i$-th coefficient of $R(T, X)$ has valuation $a_{i} \cdot \ord_{p}(t) + b_{i}$ for $\ord_{p}(t)$ sufficiently small.
We consider the curve in $\R^{d}$ given by $s \mapsto (a_{1}s + b_{1}, \ldots, a_{d}s + b_{d})$.
Since the image of this curve is an affine line, we have that for a half-space in $\R^{d}$ the curve must satisfy one of the following three possibilities:
\begin{itemize}
\item the curve is contained entirely within the interior of either the half-space or its complement,
\item the curve is contained entirely in the boundary of the half-space,
\item the curve intersects the boundary of the half-space exactly once.
\end{itemize}
Since there are only finitely many affine conditions involved in defining the stratification, we see that the curve will intersect the boundaries of strata transversally only finitely many times.
Therefore for $s \in (0, \epsilon)$ for a sufficiently small $\epsilon$ the image of the curve will be entirely contained within a single stratum (moving from $s = 0$ to $s > 0$ may change strata, but the curve cannot encounter a boundary within a sufficiently small interval above $0$).
Since the valuations of the coefficients of $R(t, X)$ for $\ord_{p}(t)$ sufficiently small land on this curve, we see that the vertices of the Newton polygon occur at the same indices for any $t$ with $0 < \ord_{p}(t) < \epsilon$ and $\ord_{p}(t)$ small enough for each coefficient of $R(T, X)$ to satisfy \cref{lem:weierstrass_small_cont}.
\end{proof}

\subsection{Bounded sums of roots of unity}\label{sec:bounded_sum_rigidity}

In this section we prove our main result on the rigidity of algebraic power series.
By algebraic power series we mean elements of integral extensions $M$ of $\Lambda$.
Our main result (\cref{thm:many_roi_rigidity}) is the following: if an algebraic power series is a sum of at most $B$ roots of unity when specialized at infinitely many points which extend points of the form $T \mapsto \zeta - 1$ for $p$-power roots of unity $\zeta$, then it is a power series which is a linear combination of at most $B$ terms of the form 
\[
(1 + T)^{e} = \sum_{n=0}^{\infty} \binom{e}{n} T^{n}
\]
where for $e \in \Z_{p}$, $\binom{e}{n}$ is the usual binomial coefficient $\binom{e}{n} = \frac{e(e-1) \ldots (e - n + 1)}{n!}$.
We call power series of the form $(1 + T)^{e}$ ``exponential'' power series.
This result is inspired by the rigidity results used by Hida in his work on the relationship between Hecke fields and complex multiplication for ordinary families.
For example see Lemma 5.1 and Proposition 5.2 of \cite{hida_hecke_fields_of_analytic_families_of_modular_forms}, see also \cite{hida_hecke_fields_of_hilbert_modular_analytic_families}, \cite{hida_transcendence}, and \cite{hida_growth_of_hecke_fields} for variations on these statements.
In a different context, similar results are also used in \cite{serban_manin-mumford} and \cite{serban_bianchi} in studying $p$-adic families of automorphic forms over imaginary quadratic fields.

In Hida's work the need for these rigidity lemmas arises in the following way.
Given infinitely many $\ell$-Weil numbers of bounded degree over $\Q(\mu_{p^{\infty}})$ ($\ell$ a prime different from $p$), there are only finitely many up to equivalence (two Weil numbers are equivalent if their quotient is a root of unity, see corollary 2.2. of \cite{hida_transcendence}).
Hence if $F$ is an algebraic power series specializing to $\ell$-Weil numbers at roots of unity, it must be the case that after dividing out by some Weil number we have a power series which takes values in $\mu_{p^{\infty}}$ infinitely often.
The algebraic power series $F$ in question are those interpolating Frobenius eigenvalues across a $p$-ordinary family of modular forms.
Applying the rigidity statement allows us to produce forms in this family with controlled Hecke fields, and from there use those forms to establish that the family has complex multiplication.

We are interested in rigidity statements that apply to algebraic power series specializing to a bounded number of roots of unity infinitely often.
The main difficulty in establishing rigidity statements for algebraic power series specializing to a bounded number of roots of unity, rather than a single root of unity, is that cancellation between different terms can interfere with precise control of valuations.
The following facts about quotients of rings of integers are crucial to putting limits on the possible cancellations that can occur among sums of roots of unity.
\begin{lemma}\label{lem:algebra_quotient}
Suppose that $\calO/\pi^{n}$ has characteristic $p$, i.e. $n$ is less than or equal to the ramification index $e = [\calO:W(\F)]$.
Then $\F[x]/x^{n} \cong \calO/\pi^{n}$, with the isomorphism given by $x \mapsto \pi$.
\end{lemma}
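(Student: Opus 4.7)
The plan is to define a ring homomorphism $\varphi : \F[x]/x^n \to \calO/\pi^n$ by sending $x$ to $\pi$, and then verify it is bijective via a cardinality count combined with an explicit surjectivity argument using Teichmüller lifts.

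The first task is to realize $\calO/\pi^n$ as an $\F$-algebra, which is where the hypothesis $n \leq e$ enters crucially. Since $p \in \pi^e \calO \subseteq \pi^n \calO$, the composite $W(\F) \hookrightarrow \calO \twoheadrightarrow \calO/\pi^n$ kills $p$, and hence factors through $W(\F)/pW(\F)$; using that $\F$ is perfect (so that $p$ is a uniformizer of $W(\F)$) we may identify this quotient with $\F$. The resulting ring map $\F \to \calO/\pi^n$ combined with $\pi \in \calO/\pi^n$ yields a ring map $\F[x] \to \calO/\pi^n$, and this descends to $\F[x]/x^n$ because $\pi^n = 0$ in the target.

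To check that $\varphi$ is an isomorphism, I would compare cardinalities: the $\pi$-adic filtration on $\calO/\pi^n$ has successive quotients each isomorphic to $\F$, so $|\calO/\pi^n| = |\F|^n = |\F[x]/x^n|$. Surjectivity is then enough to conclude, and it follows by writing any element of $\calO/\pi^n$ in its Teichmüller expansion $\sum_{i=0}^{n-1} [a_i]\, \pi^i$ with $a_i \in \F$; the lift $[a_i]$ agrees with the image of $a_i$ under the section constructed above, since both come from the same element of $W(\F)$. The only genuinely delicate point is the construction of the ring-theoretic section $\F \to \calO/\pi^n$: such a splitting of the residue map is a phenomenon special to equal characteristic (as the example $\Z/p^2 \not\cong \F_p[x]/x^2$ already shows), and the hypothesis $n \leq e$ is exactly what places $\calO/\pi^n$ in that regime.
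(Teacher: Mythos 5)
Your proof is correct and takes essentially the same route as the paper: both arguments hinge on producing a ring-theoretic Teichm\"{u}ller-type section $\F \to \calO/\pi^{n}$, sending $x \mapsto \pi$, and deducing surjectivity from Teichm\"{u}ller expansions. The only differences are cosmetic: the paper constructs the section intrinsically on the quotient via $a \mapsto \lim_{m} \tilde{a}^{p^{m}}$ and identifies the kernel of $\F[x] \to \calO/\pi^{n}$ as $(x^{n})$ directly, whereas you obtain the section by factoring $W(\F) \to \calO/\pi^{n}$ through $W(\F)/pW(\F) \cong \F$ and conclude injectivity by a cardinality count, which is legitimate here since $\F$ is finite.
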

\begin{proof}
Since $\calO/\pi^{n}$ has characteristic $p$, the Teichm\"{u}ller lift map $\F \to \calO/\pi^{n}$ given by $a \mapsto \lim_{m \to \infty} \tilde{a}^{p^{m}}$, where $\tilde{a}$ is any lift of $a$, is an algebra homomorphism.
Consider the map $\F[x] \to \calO/\pi^{n}$ given by $x \mapsto \pi$.
This map is surjective since $\calO/\pi^{n}$ is generated by Teichm\"{u}ller lifts and $\pi$.
The kernel of this map is $(x^{n})$, and so we have the claimed isomorphism.
\end{proof}

\begin{remark}\label{rem:algebra_quotient_cyclotomic}
Suppose that $\zeta$ is a primitive $p^{n}$-th root of unity, and $\calO = W(\F)$ is the ring of integers of an unramified extension of $\Q_{p}$.
Then for $m < n$ we have that $(\zeta^{p^{m}} - 1) = (\zeta - 1)^{p^{m}}$ as ideals of $\calO[\zeta]$ by comparing valuations.
Since $\zeta^{p^{m}} - 1$ has positive valuation less than $1$, the quotient $\calO[\zeta]/(\zeta^{p^{m}} - 1)$ has characteristic $p$, and is a polynomial ring $\F[x]/x^{p^{m}}$ by \cref{lem:algebra_quotient}.
Moreover, by changing variables to $y = x + 1$ we see that
\begin{align*}
\frac{\F[y]}{y^{p^{m}} - 1} 	& \to \frac{\calO[\zeta]}{\zeta^{p^{m}} - 1} \\
y 							& \mapsto \zeta	
\end{align*}
is an isomorphism.
\end{remark}

We begin by proving our main result in the special case of an algebraic power series which takes values in $\mu_{p^{\infty}}$ infinitely often.
This proof serves as a good introduction to the ideas in the proof of \cref{thm:many_roi_rigidity} while being less technical.
The first appearance of this result is in \cite{hida_hecke_fields_of_analytic_families_of_modular_forms}, where two proofs are given; our strategy builds off of the second proof in \cite{hida_hecke_fields_of_analytic_families_of_modular_forms} which Hida credits to Kiran Kedlaya.

\begin{theorem}[Lemma 5.1 in \cite{hida_hecke_fields_of_analytic_families_of_modular_forms}]\label{thm:one_roi_rigidity}
Suppose that we are given the following data:
\begin{itemize}
\item an element $F$ in an integral extension $M$ of $\Lambda$
\item an infinite set $S \subset \mu_{p^{\infty}}$
\item for each $\zeta \in S$, a $\Qbar_{p}$ point $P_{\zeta}$ of $M$ which extends the point $T \mapsto \zeta - 1$ of $\Lambda$
\end{itemize}
with the property that for each $\zeta \in S$, $F(P_{\zeta})$ is a power of $\zeta$.
Then there is a root of unity $\xi'$ and exponent $e \in \Z_{p}$ such that $F \in \Lambda[\xi']$ and $F = \xi' (1 + T)^{e}$.
\end{theorem}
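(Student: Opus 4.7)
The plan is to use the isomorphism from \cref{rem:algebra_quotient_cyclotomic} to constrain $F$ modulo $(p, (1+T)^{p^{m}} - 1)$ for each $m \geq 1$, extract an exponent $e \in \Z_{p}$ by compatibility across $m$, and then apply a final pigeonhole on a bounded group of roots of unity to produce $\xi'$.

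First, observe that for each $m$ and each primitive $p^{n}$-th root of unity $\zeta$ with $n > m$, the composite $M \xrightarrow{P_{\zeta}} \calO[\zeta] \twoheadrightarrow \calO[\zeta]/(p, \zeta^{p^{m}} - 1) \cong \F[y]/(y^{p^{m}} - 1)$ (using \cref{rem:algebra_quotient_cyclotomic}, sending $\zeta \mapsto y$) factors through the finite $\F[y]/(y^{p^{m}} - 1)$-algebra $M/((1+T)^{p^{m}} - 1, p)M$ and sends $F \mapsto y^{a_{\zeta} \pmod{p^{m}}}$. This algebra has $\F[y]/(y^{p^{m}} - 1)$-rank at most $d := \rank_{\Lambda}(M)$, so it admits at most $d$ algebra homomorphisms to $\F[y]/(y^{p^{m}} - 1)$. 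Since distinct residues $e \in \Z/p^{m}\Z$ give distinct elements $y^{e}$, the set $\{a_{\zeta} \pmod{p^{m}} : \zeta \in S, \ord(\zeta) > p^{m}\}$ has at most $d$ elements.

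Pigeonhole yields an $e_{m} \in \Z/p^{m}\Z$ with $A_{m} := \{\zeta \in S : \ord(\zeta) > p^{m}, a_{\zeta} \equiv e_{m} \pmod{p^{m}}\}$ infinite; the set $D_{m}$ of such valid $e_{m}$ has size at most $d$ and forms a compatible inverse system, so K\"{o}nig's lemma yields a compatible sequence and hence $e \in \Z_{p}$ with $e \pmod{p^{m}} \in D_{m}$ for every $m$. The hardest step is passing from this $e$ to a uniform $\xi' \in \mu_{p^{\infty}}$ together with an infinite $S'' \subset S$ on which $F(P_{\zeta}) = \xi'\zeta^{e}$: the key observation is that when $\zeta \in A_{m}$ has order exactly $p^{m+1}$ the ratio $\zeta^{a_{\zeta} - e}$ automatically lies in $\mu_{p}$, so if such $\zeta$ exist for infinitely many $m$ a pigeonhole on the finite set $\mu_{p}$ extracts a single $\xi' \in \mu_{p}$ achieved infinitely often; otherwise one iterates the argument, using the bound $|D_{m}| \leq d$ to argue that the gap between $\ord(\zeta)$ and $m$ can be kept bounded for infinitely many $\zeta$, landing $\zeta^{a_{\zeta} - e}$ in a fixed $\mu_{p^{C}}$ and pigeonholing there.

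With such $e \in \Z_{p}$ and $\xi' \in \mu_{p^{\infty}}$ in hand, the element $F - \xi'(1+T)^{e}$ lies in the integral extension $M[\xi']$ of $\Lambda[\xi']$ and vanishes at the infinite collection of $\Qbar_{p}$-points $\{P_{\zeta} : \zeta \in S''\}$. By \cref{lem:integral_weierstrass_zero} applied to $M[\xi']$ over $\Lambda[\xi']$, this difference must be zero, so $F = \xi'(1+T)^{e} \in \Lambda[\xi']$ as required.
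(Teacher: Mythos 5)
Your opening counting step does not hold up. First, the specialized reduction need not land in $\F[y]/(y^{p^{m}}-1)$: the image $P_{\zeta}(M)$ is in general strictly larger than $\calO[\zeta]$ (larger residue field, extra ramification), so the composite you describe is a homomorphism into some bigger Artinian algebra, not one of the homomorphisms you propose to count. More seriously, even for genuine $R$-algebra homomorphisms with $R=\F[y]/(y^{p^{m}}-1)$, the inference ``rank at most $d$ over $R$, hence at most $d$ algebra homomorphisms to $R$'' is false, because $R$ is a non-reduced Artinian local ring rather than a domain: for instance $R[x]/(x^{2})$ has rank $2$ but admits one $R$-algebra map to $R$ for every square-zero element of $R$, and quotients of this shape really do occur (take $M$ containing $\sqrt{p}$, say). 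So the bound $|D_{m}|\leq d$ on the residues $a_{\zeta}\bmod p^{m}$ is not established by your argument.

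Even if one grants $|D_{m}|\leq d$ for all $m$, the step you yourself flag as hardest is where the proof genuinely breaks: that combinatorial constraint does not force the discrepancy $\zeta^{a_{\zeta}-e}$ into a fixed $\mu_{p^{C}}$ for infinitely many $\zeta$. Concretely, take orders $p^{n_{1}}<p^{n_{2}}<\cdots$ growing rapidly and exponents with $a_{\zeta_{j}}\equiv p^{n_{j-1}}\bmod p^{n_{j}}$ for $\zeta_{j}$ of order $p^{n_{j}}$: then every $D_{m}$ has at most two elements, yet for every $e\in\Z_{p}$ the multiplicative order of $\zeta_{j}^{a_{\zeta_{j}}-e}$ tends to infinity, so there is no fixed finite set of candidate $\xi'$ to pigeonhole on, and the ``iterate, keeping the gap between $\ord(\zeta)$ and $m$ bounded'' claim has no support. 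What rules out such behaviour for an algebraic power series is precisely the analytic input your proposal never uses: the paper forms $H=F-(1+T)^{e}$ with $e$ a limit point of the exponents, and uses Weierstrass preparation together with the stability of Newton polygons (\cref{rem:integral_weierstrass_small_cont}, \cref{lem:polygon_continuity}) to get $\ord_{p}(H(P_{\zeta}))=a\cdot\ord_{p}(\zeta-1)+b$ with $b>0$, which forces $a_{\zeta}\equiv e\bmod p^{n-k}$ for a \emph{fixed} $k$ depending only on $b$, placing the discrepancy in the fixed group $\mu_{p^{k}}$ before the pigeonhole and the appeal to \cref{lem:integral_weierstrass_zero}. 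Without some substitute for that valuation estimate, your construction of $\xi'$ does not go through; the final step of your argument (vanishing at infinitely many points implies $F=\xi'(1+T)^{e}$) is fine.
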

\begin{remark}
In Hida's formulation of this result (which is stated for power series only rather than elements of integral extensions of $\Lambda$), it is only required that $F(P_{\zeta}) \in \mu_{p^{\infty}}$ for infinitely many $\zeta$.
While this may seem more general in that $F(P_{\zeta})$ could potentially be a $p$-th root of $\zeta$ for all $\zeta$, the control of valuations as in \cref{rem:integral_weierstrass_small_cont} and \cref{lem:polygon_continuity} is enough to show that if we have such an $F \in \Lambda$, then in fact $F(P_{\zeta})$ is a power of $\zeta$ for all $\zeta$ of sufficiently large order.
\end{remark}
\begin{proof}
Suppose that $F(P_{\zeta}) = \zeta^{e_{\zeta}}$ for some integer exponent $e_{\zeta}$.
Since $\Z_{p}$ is compact, the infinite set of $e_{\zeta}$ must have a limit point $e \in \Z_{p}$.
We restrict $S$ to a subset such that $e_{\zeta} \to e$ as the multiplicative order of $\zeta$ goes to $\infty$.
Define $G(T) = (1 + T)^{e}$.
Define $H = F - G$, and let $R(T, X)$ be a monic irreducible polynomial in $\Lambda[X]$ which $H$ satisfies.

On one hand we know from \cref{rem:integral_weierstrass_small_cont} and \cref{lem:polygon_continuity} that there are positive rational numbers $a, b$ such that for $\zeta \in S$ of sufficiently large order we have
\[
\ord_{p}(H(P_{\zeta})) = a \cdot \ord_{p}(\zeta - 1) + b.
\]
On the other hand we can compute directly that if $\zeta \in S$ is of order $p^{n}$, and $e_{\zeta} \equiv e \bmod{p^{m}}$ for $n \geq m$, then
\begin{align*}
\ord_{p}(H(P_{\zeta})) 	& = \ord_{p}(\zeta^{e_{\zeta}} - \zeta^{e}) \\
						& = \ord_{p}(\zeta^{e_{\zeta} - e} - 1) \\
						& \geq \ord_{p}(\zeta^{p^{m}} - 1) \\
						& = p^{m} \cdot \ord_{p}(\zeta - 1).
\end{align*}
since $\zeta^{e_{\zeta} - e}$ has multiplicative order at most $p^{n-m}$. 
Choosing our $\zeta$ of large enough order so that $p^{m} > a$, we see by comparing our two expressions for $\ord_{p}(H(P_{\zeta}))$ that we must have $b > 0$.

Fix a $k$ such that $b > \frac{1}{\varphi(p^{k})}$ (note that $\frac{1}{\varphi(p^{k})} = \ord_{p}(\zeta^{p^{n-k}} - 1)$ for $\zeta$ of order $p^{n}$).
Then if $\zeta \in S$ is a primitive $p^{n}$-th root of unity for $n > k$ and $e_{\zeta}$ is sufficiently $p$-adically close to $e$, we have that $H(P_{\zeta}) = 0$ in the quotient ring $R_{\zeta} = \Z_{p}[\zeta]/(\zeta^{p^{n-k}} - 1)$; this follows by computing valuations since $\ord_{p}(H(P_{\zeta})) \geq \ord_{p}(\zeta^{p^{n-k}} - 1)$ by the above choices.
As in \cref{rem:algebra_quotient_cyclotomic} we have that $R_{\zeta}$ is isomorphic to a truncated polynomial ring $\F_{p}[y]/(y^{p^{n-k}} - 1)$ where the isomorphism sends $y \mapsto \zeta$.
In order for $y^{e_{\zeta}} - y^{e} = 0$ in $R_{\zeta}$ it must be the case that $e_{\zeta} \equiv e \bmod{p^{n-k}}$ for all such $\zeta$.
However there are only finitely many values that $\zeta^{e_{\zeta} - e}$ can take if $e_{\zeta} \equiv e \bmod{p^{n-k}}$ as this must be a $p^{k}$-th root of unity.
So if we choose $\xi'$ such that $\zeta^{e_{\zeta} - e} = \xi'$ for infinitely many $\zeta$, we see that $F - \xi'(1 + T)^{e}$ is $0$ when specialized at infinitely many of the $P_{\zeta}$.
Therefore \cref{lem:integral_weierstrass_zero} shows that $F = \xi'(1 + T)^{e}$.

\end{proof}

We are now in place to prove the main result of this section.
Before doing so we sketch the idea of the proof, which follows the same strategy as \cref{thm:one_roi_rigidity}.
Given an $F$ as in \cref{thm:many_roi_rigidity}, we use the density of the exponents appearing to produce a guess $G(T)$ for the form of $F$ which is a linear combination of exponential power series.
We can show that the difference $F - G$ is $p$-adically close to $0$ under many specializations; the challenge is to show that this is because the terms of $F$ match up with the terms of $G$ to cancel out, rather than the terms of $F$ cancelling out with each other.
By working in the quotient ring by an appropriate power of $(\zeta - 1)$ as in \cref{rem:algebra_quotient_cyclotomic} we are in a polynomial ring, where we can ensure that unexpected cancellations are limited.
Some cancellation between terms may still occur, but we can classify such cancellations into groups of terms which are consistently close to each other $p$-adically.
This grouping allows us to refine our guess $G$, possibly reducing the value $B$, and repeat until we've ruled out all possible unexpected cancellations.

\begin{theorem}\label{thm:many_roi_rigidity}
Suppose that we are given the following data:
\begin{itemize}
\item an element $F$ in an integral extension $M$ of $\Lambda$
\item a constant $B \in \Z_{\geq 0}$
\item an infinite set $S \subset \mu_{p^{\infty}}$
\item for each $\zeta \in S$, a $\Qbar_{p}$ point $P_{\zeta}$ of $M$ which extends the point $T \mapsto \zeta - 1$ of $\Lambda$
\item a root of unity $\xi$
\item coefficients $c_{1}, \ldots, c_{B} \in \Z[\xi]$
\end{itemize}
with the property that for each $\zeta \in S$, $F(P_{\zeta}) \in \Z[\xi, \zeta]$ and $F(P_{\zeta})$ can be written in the form
\begin{align*}
F(P_{\zeta})	& = \sum_{i=1}^{B} c_{i}\zeta^{e_{\zeta, i}}
\end{align*}
for some exponents $e_{\zeta, i}$.
Then there is a root of unity $\xi'$, coefficients $d_{i} \in \Z[\xi']$, and exponents $e_{i} \in \Z_{p}$ such that $F \in \Lambda[\xi']$ and
\begin{align*}
F 	& = \sum_{i=1}^{B} d_{i}(1 + T)^{e_{i}}.
\end{align*}
\end{theorem}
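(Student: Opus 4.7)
The plan is to follow the approach of \cref{thm:one_roi_rigidity} closely: use a compactness argument to produce a candidate $G$ for the form of $F$, bound the difference $F - G$ from below using Newton polygon considerations and from above using the given sum decomposition, and iterate with induction on $B$ when cancellations obscure the analysis.

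First, by $p$-adic compactness of $\Z_{p}^{B}$, I pass to an infinite subset $S_{0} \subseteq S$ on which each sequence $(e_{\zeta, i})_{\zeta \in S_{0}}$ converges to a limit $e_{i} \in \Z_{p}$. Grouping indices whose limits coincide into blocks $I_{1}, \ldots, I_{B'}$ with common values $e^{(1)}, \ldots, e^{(B')}$ and setting $d_{j} = \sum_{i \in I_{j}} c_{i}$, I form the candidate
\[
G(T) = \sum_{j=1}^{B'} d_{j}(1+T)^{e^{(j)}} \in \Lambda[\xi]
\]
and study $H = F - G$. Applying \cref{rem:integral_weierstrass_small_cont} and \cref{lem:polygon_continuity} to a monic irreducible polynomial satisfied by $H$, either $H = 0$ (in which case $F = G$ already has the desired shape) or $\ord_{p}(H(P_{\zeta})) = a \cdot \ord_{p}(\zeta - 1) + b$ for fixed rationals $a, b$ and all $\zeta \in S_{0}$ of sufficiently small valuation. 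On the other hand, for $\zeta \in S_{0}$ with $e_{\zeta, i} \equiv e^{(g(i))} \pmod{p^{m}}$ for all $i$ (where $g(i)$ labels the block containing $i$), expanding $H(P_{\zeta}) = \sum_{i} c_{i} \zeta^{e^{(g(i))}}(\zeta^{e_{\zeta, i} - e^{(g(i))}} - 1)$ and using $\ord_{p}(c_{i}) \geq 0$ yields $\ord_{p}(H(P_{\zeta})) \geq p^{m} \cdot \ord_{p}(\zeta - 1)$. Comparing these two bounds as in \cref{thm:one_roi_rigidity} forces $b > 0$ assuming $H \neq 0$, so fixing $k$ with $1/\varphi(p^{k}) < b$, the identity $H(P_{\zeta}) \equiv 0$ holds in the quotient $R_{\zeta} = \calO[\zeta]/(\zeta^{p^{n-k}} - 1)$ for all such $\zeta$ of sufficiently large order $p^{n}$.

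Via \cref{rem:algebra_quotient_cyclotomic}, $R_{\zeta}$ is isomorphic to the truncated polynomial ring $\F[y]/(y^{p^{n-k}} - 1)$, reducing the analysis to the monomial identity $\sum_{i} \bar{c}_{i} y^{e_{\zeta, i}} = \sum_{j} \bar{d}_{j} y^{e^{(j)}}$. Here lies the main obstacle distinguishing this theorem from \cref{thm:one_roi_rigidity}: cancellations among the $\bar{c}_{i}$ modulo $\pi$ can produce spurious relations that prevent a straightforward coefficient match. I handle this by induction on $B$, with base cases $B = 0$ (immediate from \cref{lem:integral_weierstrass_zero}) and $B = 1$ (\cref{thm:one_roi_rigidity}). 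In the inductive step, if $B' < B$ the sum decomposition of $F(P_{\zeta})$ already reduces to $B'$ terms (with effective coefficients $d_{j}$) and the inductive hypothesis applies. Otherwise $B' = B$ and the $e^{(j)}$ are distinct modulo $p^{n-k}$ for $n$ large; matching monomials in $\F[y]/(y^{p^{n-k}} - 1)$ forces the $e_{\zeta, i}$ to coincide mod $p^{n-k}$ with the $e_{i}$ up to regroupings consistent with any $\bar{c}_{i}$ cancellation, and any unresolved cancellation is absorbed into a further refinement of $G$ that reduces to an instance with smaller $B$. A final pigeonhole on the $p^{k}$-th roots of unity $\zeta^{e_{\zeta, i} - e_{i}}$ then produces constants $\xi_{i}' \in \mu_{p^{k}}$ for which $F - \sum_{i} c_{i} \xi_{i}'(1+T)^{e_{i}}$ vanishes at infinitely many $P_{\zeta}$, hence equals zero by \cref{lem:integral_weierstrass_zero}. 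I expect the principal technical difficulty to lie in the bookkeeping of the inductive step: one must verify that each cancellation observed in $\F[y]/(y^{p^{n-k}} - 1)$ lifts, for infinitely many $\zeta$, to an actual sum representation of $F(P_{\zeta})$ with fewer terms over $\Z[\xi, \zeta]$, so that the inductive hypothesis is applied to genuinely smaller data.
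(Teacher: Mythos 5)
Your overall route is the same as the paper's own proof: induction on $B$, a candidate built from a limit point of the exponent tuples in $\Z_{p}^{B}$, a two-sided estimate on $\ord_{p}(H(P_{\zeta}))$ via \cref{rem:integral_weierstrass_small_cont} and \cref{lem:polygon_continuity}, passage to the truncated ring $\F[y]/(y^{p^{n-k}}-1)$ via \cref{rem:algebra_quotient_cyclotomic}, and pigeonholing over cancellation patterns and $\mu_{p^{k}}$-factors to reduce to a smaller $B$, finishing with \cref{lem:integral_weierstrass_zero}. However, two steps would fail as written. First, the claim that if $B' < B$ then ``the sum decomposition of $F(P_{\zeta})$ already reduces to $B'$ terms (with effective coefficients $d_{j}$)'' is not justified: coincidence of the limits $e^{(j)}$ does not force the exponents $e_{\zeta,i}$ within a block to coincide for any individual $\zeta$. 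For instance with $c_{1}=c_{2}=1$, $e_{\zeta,1}=0$ and $e_{\zeta,2}=p^{n-1}$ for $\zeta$ of order $p^{n}$, both exponents tend to $0$ in $\Z_{p}$, yet $F(P_{\zeta}) = 1 + \zeta^{p^{n-1}}$ is not of the form $2\zeta^{e}$. The repair is the mechanism you describe only in your $B'=B$ branch (and which the paper applies uniformly, without a case split): exponents that become congruent mod $p^{n-k}$ differ multiplicatively by some $\zeta_{0} \in \mu_{p^{k}}$, and after pigeonholing on $\zeta_{0}$ the corresponding terms collapse into a single term with coefficient in $\Z[\xi,\zeta_{0}]$ --- note this enlarges the root of unity $\xi$ before the inductive hypothesis can be invoked, which is why the conclusion allows a new $\xi'$.

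Second, you drop the normalization by a coefficient of minimal valuation. You only deduce $b>0$ and then read off the identity in $\F[y]/(y^{p^{n-k}}-1)$; if every $c_{i}$ (hence every $d_{j}$) lies in the maximal ideal, that identity is the vacuous $0=0$, so ``matching monomials'' forces nothing, and you cannot guarantee that at least one collapse or exact match $e_{\zeta,i} \equiv e_{i} \bmod p^{n-k}$ occurs --- which is precisely what yields the strict decrease of $B$ and keeps the induction from stalling. The paper's proof establishes $b > \min_{i}\ord_{p}(c_{i})$ and divides $H(P_{\zeta})$ by a coefficient $c$ of minimal valuation before reducing, so that at least one coefficient $c^{-1}c_{i}$ is a unit in the residue field; this is exactly the point of its remark that the collapsing ``must occur at least once.'' With these two repairs (and the attendant bookkeeping you already flag, checking that each cancellation pattern persists for infinitely many $\zeta$ and lifts to a genuine shorter sum representation over $\Z[\xi,\zeta_{0},\zeta]$), your argument coincides with the paper's.
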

\begin{proof}
The proof proceeds by induction on $B$.
If $B = 0$, we have that $F(P_{\zeta}) = 0$ for infinitely many points $P_{\zeta}$, and \cref{lem:integral_weierstrass_zero} allows us to conclude that $F = 0$, which is of the desired form.
The bulk of the proof is therefore to show that given such an $F$ as in the theorem statement, we may write $F$ in the form $G + F_{1}$, where $G$ is a power series of the desired form (a linear combination of terms of the form $(1 + T)^{e}$) and $F_{1}$ satisfies the assumptions of the theorem with a smaller value of $B$ than that of $F$.

The first step is to construct a candidate expression $G$, and then to show that the specializations of $H = F - G$ at many of the points $P_{\zeta}$ are $p$-adically close to $0$.
Considering the $e_{\zeta, i}$ as integers, we have an infinite set of points in $\Z_{p}^{B}$.
Since $\Z_{p}^{B}$ is compact, the set of tuples $e_{\zeta, i}$ must have a limit point $(e_{1}, \ldots, e_{B}) \in \Z_{p}^{B}$.
Define
\begin{align*}
H 	& = F - \sum_{i=1}^{B} c_{i}(1 + T)^{e}
\end{align*}
as an element of $M[\xi]$, and let $R(T, X) \in \Lambda[X]$ be a monic irreducible polynomial satisfied by $H$.

Let us restrict ourselves to an infinite subset of $S$ such that as the multiplicative order of $\zeta \in S$ goes to infinity we have that $e_{\zeta, i} \to e_{i}$ for each $i$.
We know from \cref{lem:polygon_continuity} that for $\zeta$ of sufficiently large multiplicative order the Newton polygon of $R(\zeta - 1, X)$ is stable, and hence the specialization $H(P_{\zeta})$ for $\zeta \in S$ must have valuation determined by one of the slopes of this polygon.
Passing to a further infinite subset of $S$ we may assume that the specialization has valuation determined by a single line segment in the stable Newton polygon, and hence for $\zeta$ of large multiplicative order we must have that
\[
\ord_{p}(H(P_{\zeta})) = a \cdot\ord_{p}(\zeta - 1) + b
\]
for some fixed rational $a$ and $b$.
Since we know that the $e_{\zeta, i} \to e_{i}$, let us restrict to $\zeta$ of sufficiently large multiplicative order so that $e_{\zeta, i} \equiv e_{i} \bmod{p^{m}}$ for $m$ chosen large enough to ensure that $p^{m} > a$.
Then we have that
\begin{align*}
\ord_{p}(H(P_{\zeta}))	& \geq \min_{i} \left(\ord_{p}(c_{i} (\zeta^{e_{\zeta, i}} - \zeta^{e_{i}}))\right) \\
						& = \min_{i} \left(\ord_{p}(c_{i}\zeta^{e_{i}}(\zeta^{e_{\zeta, i} - e_{i}} - 1))\right) \\
						& \geq \min_{i} \left(\ord_{p}(c_{i})\right) + \ord_{p}(\zeta^{p^{m}} - 1) \\
						& \geq \min_{i} \left(\ord_{p}(c_{i})\right) + p^{m} \cdot\ord_{p}(\zeta - 1)
\end{align*}
using the construction of $H$ and our choice of $\zeta$ large enough ensuring that $e_{\zeta, i} - e_{\zeta}$ is divisible by $p^{m}$.
Combining these two perspectives on $\ord_{p}(H(P_{\zeta}))$ we get that
\begin{align*}
a \cdot\ord_{p}(\zeta - 1) + b & \geq p^{m} \cdot\ord_{p}(\zeta - 1) + \min_{i} \ord_{p}(c_{i}).
\end{align*}
Since $p^{m} > a$ by construction, we see that $b > \min_{i} \ord_{p}(c_{i})$.
In particular let $c$ be one of the coefficients achieving the minimum valuation, then for every $\zeta \in S$ we have that $c^{-1}H(P_{\zeta})$ has valuation at least $v > 0$ for some constant $v$.
Note that $c^{-1}H(P_{\zeta})$ is still an element of $\Z_{p}[\xi, \zeta]$ rather than $\Q_{p}(\xi, \zeta)$ since each coefficient $c_{i}$ has valuation at least that of $c$.

Pick a $k$ such that $v > \frac{1}{\varphi(p^{k})}$.
If $\zeta \in S$ is a primitive $p^{n}$-th root of unity for $n > k$, we know that $\zeta^{p^{n-k}}$ is a primitive $p^{k}$-th root of unity, and by valuations we have that $c^{-1}H(P_{\zeta}) = 0$ in the quotient ring $R_{\zeta} = \Z_{p}[\xi, \zeta]/(\zeta^{p^{n-k}} - 1)$.
As in \cref{rem:algebra_quotient_cyclotomic} we have that this quotient ring $R_{\zeta}$ is isomorphic to a truncated polynomial ring $\F[y]/(y^{p^{n-k}} - 1)$ where $\F$ is the residue field of $\Z_{p}[\xi, \zeta]$ and the isomorphism sends $y$ to $\zeta$.
We restrict $S$ to those $\zeta$ of large enough multiplicative order $p^{n}$ so that
\begin{itemize}
\item if $e_{i} \neq e_{j}$, then $e_{i} \not\equiv e_{j} \bmod{p^{d}}$, and $n - k > d$,
\item if $e_{\zeta, i} \equiv e_{i} \bmod{p^{m}}$ for each $i = 1, \ldots, B$, and $e_{i} \not\equiv e_{j} \bmod{p^{d}}$, then $n-k > m > d$.
In particular this forces $e_{\zeta, i} \not\equiv e_{\zeta, j} \bmod{p^{n-k}}$ if $e_{i} \neq e_{j}$.
\end{itemize}
We know that $c^{-1}H(P_{\zeta}) = 0$ as an element of $R_{\zeta}$; we also have by simply reducing the expression that
\begin{align*}
c^{-1}H(P_{\zeta})	& = c^{-1}\sum_{i=1}^{B}c_{i}(y^{e_{\zeta, i}} - y^{e_{i}})
\end{align*}
in $R_{\zeta}$.
Since this expression is equal to $0$ in $R_{\zeta}$ and $\zeta$ is chosen large enough to ensure that the powers of $y$ associated to $e_{i} \neq e_{j}$ cannot interact in $R_{\zeta}$ (as these powers are distinct mod $p^{n-k}$), there must be cancellation occurring among the terms corresponding to each of the values $e_{i}$.
These cancellations must be some combination of the following three possibilities:
\begin{itemize}
\item the coefficients $c^{-1}c_{i}$ are $0$ in $R_{\zeta}$; this cannot happen for all coefficients, as at least one of these is equal to $1$ since $c = c_{i}$ for some $i = 1, \ldots, B$.
\item a set of the coefficients $c_{i}$ sums to $0$ in $\F$, and the corresponding terms $y^{e_{\zeta, i}}$ have exponents which are congruent ${}\bmod{p^{n-k}}$ (similarly the corresponding terms $y^{e_{i}}$ have exponents congruent ${}\bmod{p^{n-k}}$ which in fact implies they are equal).
\item $e_{\zeta, i} \equiv e_{i} \bmod{p^{n-k}}$.
\end{itemize}

There are finitely many patterns that such cancellations can occur in, so restrict to an infinite set of $S$ such that the same cancellation pattern occurs for each $\zeta$ in the restricted $S$.
If two terms $\zeta^{x}$ and $\zeta^{y}$ have exponents that agree ${}\bmod{p^{n-k}}$, then $\zeta^{y} = \zeta^{x}\zeta_{0}$ for a root of unity $\zeta_{0}$ of order dividing $p^{k}$.
Since there are finitely many such $\zeta_{0}$, we restrict to an infinite subset of $S$ where, after collapsing down terms in the cancellation pattern with exponents congruent ${}\bmod{p^{n-k}}$, the pattern of $p^{k}$-th roots of unity appearing is the same.
Note that this collapsing must occur at least once since not all of the coefficients $c^{-1}c_{i}$ are $0$ in $\F$.

We are now in the situation where for an infinite subset of $S$ we have that
\begin{align*}
F(P_{\zeta}) = \sum_{i=1}^{B} c_{i}' \zeta^{e_{\zeta, i}'}
\end{align*}
where the $c_{i}'$ are in $\Z[\xi, \zeta_{0}]$ for a primitive $p^{k}$-th root of unity $\zeta_{0}$, and for at least some indices $i$ the $e_{\zeta, i}'$ are either equal to $e_{i}$ ${}\bmod{p^{n}}$ for all $\zeta$ or there are several $i$ for which the $e_{\zeta, i}$ are equal for all $\zeta$.
Thus through a combination of combining coefficients with equal $e_{\zeta, i}'$ or subtracting off a term of the form $c_{i}'(1 + T)^{e_{i}}$ we have a new $F_{1} \in M[\xi, \zeta_{0}]$ which satisfies the assumptions of the theorem (with an enlarged integral extension $M$ and base ring $\Z[\xi]$ and) with a smaller value of $B$.
\end{proof}

\subsection{Sums of exponential power series}\label{sec:exponential}

In this section we collect several results on power series of the form $F(T) = \sum_{i=1}^{n} d_{i}(1 + T)^{e_{i}}$.
Given such a power series $F$, define $\pi_{i} = (1 + p)^{e_{i}}$.
The use of $\pi_{i}$ is that the specializations of $F$ that we are interested in (namely at points extending $P_{k, \zeta}: T \mapsto \zeta (1 + p)^{k-1} - 1$) can all be expressed using the $\pi_{i}$:
\[
F(P_{k, \zeta})	= \sum_{i=1}^{n} d_{i}\zeta^{e_{i}} \pi_{i}^{k-1}.
\]
In particular $F(P_{k, \zeta}) \in \Q(\zeta, d_{i}, \pi_{i})$, so if we control the field of definition of the $d_{i}$ and $\pi_{i}$, we have control of the field of definition of $F(P_{k, \zeta})$.

The $F$ that we will use arise from families of modular forms; in particular $F(P_{k, \zeta})$ will be related to Hecke eigenvalues of classical modular forms of weight $k$ and character coming from $\zeta$, and will be algebraic.
Our goal in this section is to show that under assumptions of the algebraicity of $d_{i}$ and $F(P_{k, \zeta})$ we have that the $\pi_{i}$ are algebraic.
We begin with the following combinatorial lemma.

\begin{lemma}\label{lem:vandermonde}
Let $d_{1}, \ldots, d_{n}$ be non-zero algebraic numbers, and let $e_{1}, \ldots, e_{n}$ be distinct $p$-adic integers.
Then there are distinct $p$-power roots of unity $\zeta_{1}, \ldots, \zeta_{n}$ such that the matrix with entries $x_{i, j} = d_{i}\zeta_{j}^{e_{i}}$ has non-zero determinant.
\end{lemma}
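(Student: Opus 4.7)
The plan is to reduce to a classical fact about linear independence of characters of a finite abelian group. First, observe that since each $d_i$ is non-zero, row $i$ of the matrix $(d_i \zeta_j^{e_i})$ is simply $d_i$ times row $i$ of $(\zeta_j^{e_i})$, so
\[
\det(d_i \zeta_j^{e_i})_{i,j} = \left(\prod_{i=1}^{n} d_i\right) \det(\zeta_j^{e_i})_{i,j}.
\]
Thus it suffices to exhibit distinct $p$-power roots of unity $\zeta_1, \ldots, \zeta_n$ with $\det(\zeta_j^{e_i})_{i,j} \neq 0$.

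Next, I would fix $N$ large enough that the reductions of $e_1, \ldots, e_n$ modulo $p^N$ remain pairwise distinct (possible since the $e_i$ are distinct in $\Z_p$) and that $p^N \geq n$. For $\zeta \in \mu_{p^N}$, the quantity $\zeta^{e_i}$ depends only on $e_i \bmod p^N$, so the maps $\chi_i : \mu_{p^N} \to \Qbar^{\times}$ defined by $\chi_i(\zeta) = \zeta^{e_i}$ are $n$ pairwise distinct characters of the cyclic group $\mu_{p^N}$.

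The key input is the classical linear independence of distinct characters of a finite abelian group, viewed as functions into any field of characteristic zero (Artin's theorem). Consider the $p^N \times n$ matrix $M$ with rows indexed by elements of $\mu_{p^N}$ and columns indexed by $i$, having entry $M_{\zeta, i} = \zeta^{e_i}$. Linear independence of the $\chi_i$ is precisely the statement that the columns of $M$ are linearly independent, so $M$ has full column rank $n$. Therefore some collection of $n$ rows of $M$ forms a non-singular $n \times n$ submatrix, and the $n$ distinct elements $\zeta_1, \ldots, \zeta_n \in \mu_{p^N}$ indexing those rows furnish the desired tuple.

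I do not anticipate any serious obstacle: the reduction using non-zero $d_i$ is immediate, and the rest is just Artin's theorem applied to the cyclic group $\mu_{p^N}$. The only mild bookkeeping is choosing $N$ large enough so that both the $e_i$ remain distinct modulo $p^N$ and so that $\mu_{p^N}$ has at least $n$ elements. Note that distinctness of the $\zeta_j$ is automatic in this argument, since they correspond to distinct rows (hence distinct group elements) of $M$.
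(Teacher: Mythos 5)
Your proof is correct, but it takes a genuinely different route from the paper. The paper argues by induction on $n$: having chosen $\zeta_1,\ldots,\zeta_m$ making the $m\times m$ minor nonsingular, it forms the $(m+1)\times(m+1)$ matrix whose last column has entries $d_i X^{\tilde{e}_i}$ (with $\tilde{e}_i$ the residues of $e_i$ modulo a suitable $p^k$), observes that the determinant is a nonzero polynomial in $X$ of degree less than $p^k$, and so can choose $\zeta_{m+1}\in\mu_{p^k}$ avoiding its roots; it also has to note separately that $\zeta_{m+1}$ differs from the earlier $\zeta_j$. You instead factor out the nonzero $d_i$ by multilinearity of the determinant in the rows, view $\zeta\mapsto\zeta^{e_i}$ as $n$ pairwise distinct characters of $\mu_{p^N}$ (distinct because the $e_i$ stay distinct modulo $p^N$), invoke Artin's linear independence of characters to get full column rank of the $p^N\times n$ matrix of values, and select $n$ rows forming a nonsingular submatrix — with distinctness of the $\zeta_j$ automatic since they index distinct rows. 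Your argument is shorter and more conceptual, at the cost of citing Artin's theorem; the paper's is self-contained and explicitly constructive (it produces the $\zeta_j$ greedily, one at a time, by the same degree-versus-number-of-roots count that ultimately underlies the character-independence statement). Both establish exactly the claimed lemma, and the bookkeeping you flag (choosing $N$ with the $e_i$ distinct modulo $p^N$) is the same choice the paper makes with its $p^k$.
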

\begin{proof}
We proceed by induction on $n$, with the base case $n = 1$ being satisfied by the choice of $\zeta =  1$ since $d_{1} \neq 0$.

Assume by induction that we've chosen $\zeta_{1}, \ldots, \zeta_{m}$ for some $m < n$ such that the $m \times m$ matrix with entries $x_{i, j} = d_{i} \zeta_{j}^{e_{i}}$ for $1 \leq i, j \leq m$ has non-zero determinant.
Choose $k$ large enough so that $e_{1}, \ldots, e_{m+1}$ are distinct modulo $p^{k}$ and such that $\zeta_{1}, \ldots, \zeta_{m}$ are all in $\mu_{p^{k}}$.
Let $\tilde{e}_{i}$ be the unique integer which satisfies $0 \leq \tilde{e}_{i} < p^{k}$ and $e_{i} \equiv \tilde{e}_{i} \bmod{p^{k}}$.
Consider the $(m+1)\times(m+1)$ matrix with entries
\begin{align*}
y_{i, j}	& = \begin{cases} x_{i, j} & j \leq m \\ d_{i}X^{\tilde{e}_{i}} & j = m+1 \\ \end{cases}
\end{align*}
where $X$ is a formal variable.
The determinant of this matrix is thus a polynomial in $X$ which is necessarily non-zero as each matrix entry containing $X$ appears with a different power of $X$ so no cancellation can occur between them, and there is at least one term (the $X^{\tilde{e}_{m+1}}$ term) which appears with a non-zero coefficient, by the inductive hypothesis guaranteeing that the upper left minor of the matrix has non-zero determinant.
The degree of this determinant polynomial is one of the $\tilde{e}_{i}$, hence it is strictly less than $p^{k}$.
Since there are $p^{k}$ roots of unity in $\mu_{p^{k}}$, not every element of $\mu_{p^{k}}$ can be a root of this polynomial, hence there is a choice of $\zeta_{m+1} \in \mu_{p^{k}}$ which produces a non-zero determinant when we set $X = \zeta_{m+1}$.
Note that $\zeta_{m+1} \neq \zeta_{j}$ for any $1 \leq j \leq m$, as that would cause two columns of the matrix to be equal (and the determinant to be $0$).

Finally we conclude that this choice of $\zeta_{m+1}$ satisfies the original claim with $e_{i}$ instead of $\tilde{e}_{i}$: since $e_{i} \equiv \tilde{e}_{i} \bmod{p^{k}}$, we have that $\zeta_{m+1}^{e_{i}} = \zeta_{m+1}^{\tilde{e}_{i}}$ for each $i$.
\end{proof}

This lemma is used in the following proposition to show that the $\pi_{i}$ are algebraic, given algebraicity assumptions on the $d_{i}$ and specializations of $F$.
The strategy of the proof is to realize (powers of) the $\pi_{i}$ as solutions to a system of linear equations in the $d_{i}$ and specializations of $F$.

\begin{proposition}\label{prop:pi_algebraic}
Suppose that $F(T) \in \calO\llbracket T \rrbracket$ is of the form $\sum_{i=1}^{n} d_{i}(1 + T)^{e_{i}}$ for some non-zero $d_{i} \in \calO$ and distinct $e_{i} \in \Z_{p}$.
Assume further that the $d_{i}$ are algebraic, and that there is an integer $k \geq 2$ such that $F(P_{k, \zeta})$ is algebraic for almost all $\zeta \in \mu_{p^{\infty}}$.
Then $\pi_{i} = (1 + p)^{e_{i}}$ is algebraic for each $i$.
\end{proposition}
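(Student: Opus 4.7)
The plan is to turn the hypothesis into a linear system whose unknowns are $\pi_i^{k-1}$, with algebraic coefficients and algebraic right-hand sides, and then invert the system using \cref{lem:vandermonde}.

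First, I would make the key observation that for any $\zeta \in \mu_{p^\infty}$,
\[
F(P_{k,\zeta}) \;=\; \sum_{i=1}^{n} d_i\, \zeta^{e_i}\, \pi_i^{k-1},
\]
so if we fix $n$ roots of unity $\zeta_1, \ldots, \zeta_n$, the values $F(P_{k,\zeta_j})$ are exactly the entries of the matrix-vector product $A \vec{\pi}$, where $A$ has entries $A_{j,i} = d_i \zeta_j^{e_i}$ and $\vec{\pi} = (\pi_1^{k-1}, \ldots, \pi_n^{k-1})^T$. Notice that with $k$ fixed, the matrix $A$ differs from the Vandermonde-style matrix of \cref{lem:vandermonde} only by a multiplication on each row by $\zeta_j^{e_i}$, which amounts to swapping rows and columns; in particular, $A$ is just the transpose of the matrix appearing in \cref{lem:vandermonde}, so \cref{lem:vandermonde} applies directly.

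Next, I would apply \cref{lem:vandermonde} to the data $(d_1, \ldots, d_n)$ and $(e_1, \ldots, e_n)$ to produce distinct $p$-power roots of unity $\zeta_1, \ldots, \zeta_n$ making $\det A \neq 0$. There is a small bookkeeping point: the lemma as stated makes a choice among all $\zeta \in \mu_{p^m}$ for some $m$, but we need our chosen $\zeta_j$ to lie in the cofinite subset of $\mu_{p^\infty}$ on which $F(P_{k,\zeta})$ is algebraic. This is easily arranged because at each inductive step in the proof of \cref{lem:vandermonde}, the ``good'' choices of $\zeta_{m+1}$ form the complement of the roots of a nonzero polynomial of degree strictly less than $p^k$ in $\mu_{p^k}$; by enlarging $k$ we can ensure there are more than enough good choices to also avoid the finitely many excluded $\zeta$.

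With such $\zeta_j$ in hand, every entry of $A$ is algebraic (since the $d_i$ are algebraic by hypothesis and roots of unity are algebraic), and the right-hand side vector $(F(P_{k,\zeta_j}))_j$ is algebraic by hypothesis. Since $\det A \neq 0$, Cramer's rule expresses each $\pi_i^{k-1}$ as a ratio of determinants of matrices with algebraic entries, so each $\pi_i^{k-1}$ is algebraic. Finally, since $k \geq 2$, the element $\pi_i$ is a root of the polynomial $X^{k-1} - \pi_i^{k-1}$ over $\overline{\Q}$, so $\pi_i$ is algebraic, completing the proof.

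I expect the one step requiring genuine care to be arranging that the $\zeta_j$ produced by \cref{lem:vandermonde} avoid the finite set of ``bad'' $\zeta$ where $F(P_{k,\zeta})$ might fail to be algebraic; everything else is linear algebra. The rest is automatic from the structure $F(P_{k,\zeta}) = \sum_i (d_i \zeta^{e_i}) \pi_i^{k-1}$, which is the whole reason the variables $\pi_i = (1+p)^{e_i}$ were isolated at the start of \cref{sec:exponential}.
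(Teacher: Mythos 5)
Your proposal is correct and follows essentially the same route as the paper's own proof: apply \cref{lem:vandermonde} to get an invertible matrix with entries $d_i\zeta_j^{e_i}$, then solve for the $\pi_i^{k-1}$ by Cramer's rule and conclude algebraicity. Your bookkeeping about choosing the $\zeta_j$ to avoid the finitely many bad roots of unity is handled in the paper by the one-line remark that the $\zeta_j$ may be taken of sufficiently large multiplicative order, so your extra care there is a welcome (if inessential) elaboration.
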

\begin{proof}
Applying \cref{lem:vandermonde} we see that there exists distinct $p$-power roots of unity $\zeta_{1}, \ldots, \zeta_{n}$ such that the system of equations
\begin{align*}
\begin{array}{ccccccc}
F(P_{k, \zeta_{1}})		& = & d_{1}\zeta_{1}^{e_{1}} \pi_{1}^{k-1} & + & \ldots & + & d_{n}\zeta_{1}^{e_{n}} \pi_{n}^{k-1} \\
\vdots					& = & \vdots &  & \vdots &  & \vdots \\
F(P_{k, \zeta_{n}})		& = & d_{1}\zeta_{n}^{e_{1}} \pi_{1}^{k-1} & + & \ldots & + & d_{n}\zeta_{n}^{e_{n}} \pi_{n}^{k-1} \\
\end{array}
\end{align*}
having ``coefficients'' $d_{i}\zeta_{j}^{e_{i}}$ has a unique solution (the matrix of these coefficients is invertible), the solution being the $\pi_{i}^{k-1}$.
Note that we may choose our $\zeta_{j}$ of sufficiently large multiplicative order to guarantee that the $F(P_{k, \zeta})$ are algebraic.

By Cramer's rule the solutions $\pi_{i}^{k-1}$ to the system of equations above have polynomial expressions in terms of the quantities $F(P_{k, \zeta})$ and $d_{i}\zeta_{j}^{e_{i}}$.
As all of these quantities are algebraic, we conclude that the $\pi_{i}^{k-1}$ are algebraic, and hence the $\pi_{i}$ themselves are algebraic.
\end{proof}

\section{Construction of large Galois representations}\label{sec:construction}

In this section we perform the construction which will allow us to propagate information about the degrees of Hecke fields between different weights in our ordinary families.

The idea behind this construction is to essentially take the trace over a character field of the Galois representations attached to a component of the ordinary Hecke algebra which contains many weight one specializations.
We do this to put ourselves in a situation where the characteristic polynomials of Frobenius will have cyclotomic integer coefficients at many weight one specializations, so that the results of \cref{sec:bounded_sum_rigidity} apply.
This will allow us to propagate the fact that we have bounded Hecke fields in weight one to higher weights, where we are allowing ourselves to utilize the Ramanujan conjecture and we may apply results of Hida to deduce that our component of the ordinary Hecke algebra has complex multiplication.
This link with higher weight will occur in the next section; in this section we content ourselves with performing this trace construction and showing that the results of \cref{sec:bounded_sum_rigidity} apply to the resulting characteristic polynomials of Frobenius.

We set up the following notation for use in this section.
\begin{itemize}
\item Fix a prime $p$.
\item For some large enough finite extension $\calO$ of $\Z_{p}$, $\Lambda = \calO \llbracket T \rrbracket$ is the weight space for $p$-ordinary Hecke algebras.
We use the notation $P_{k, \zeta}$ for the map $\Lambda \to \Qbar_{p}$ given by $T \mapsto \zeta(1 + p)^{k-1} - 1$; where it will not cause confusion we also use $P_{k, \zeta}$ as notation for the kernel of this map.
\item Fix a tame level $N \nmid p$.
\item We let $\mathbf{H}^{\ord}$ be the $\Lambda$-adic ordinary Hecke algebra $\mathbf{H}^{\ord}(N; \calO)$ with tame level $N$.
\end{itemize}

\subsection{Selecting components}\label{sec:selecting_components}

Our basic assumption will be that we have a component $\mathbf{I}$ of $\mathbf{H}^{\ord}$ which specializes to infinitely many classical weight one eigenforms.
By an extended pigeonhole principle argument, we select several more components of $\mathbf{H}^{\ord}$ with the property that together these components see all Galois conjugates of the classical weight one forms arising from $\mathbf{I}$.

\begin{theorem}\label{thm:components}
Suppose that $\mathbf{I} = \mathbf{H}^{\ord}/\frakP$ is a reduced, irreducible component of $\mathbf{H}^{\ord}$, with the property that there are infinitely many classical weight one eigenforms arising as specializations of $I$.
Then there is an infinite set $R$ of classical weight one eigenforms arising from $\mathbf{I}$ such that the following hold.
\begin{enumerate}
\item[(1)] There is an integer $m \leq \rank_{\Lambda}(\mathbf{H}^{\ord})$ such that each $f \in R$ has exactly $m$ Galois conjugates over its character field.
\item[(2)] There are reduced, irreducible components $\mathbf{I}_{i} = \mathbf{H}^{\ord}/\frakP_{i}$ of $\mathbf{H}^{\ord}$ for $i = 1, \ldots, m$ and for each $f \in R$ there is a $\Qbar_{p}$-point $P_{f, i}$ of $\mathbf{I}_{i}$ with the following property.
In some ordering $f_{1}, \ldots, f_{m}$ of the $m$ Galois conjugates of $f$ over its character field, the system of eigenvalues of $f_{i}$ arises as the specialization of $\mathbf{I}_{i}$ at the point $P_{f, i}$.
We may take $f_{1} = f$ and $\mathbf{I}_{1} = \mathbf{I}$.
\item[(3)] There exists a finite Galois extension $\Frac(M)$ of $\Frac(\Lambda)$ with $M$ the integral closure of $\Lambda$, together with fixed embeddings $e_{i}: \mathbf{I}_{i} \to M$.
\item[(4)] For each $f \in F$ there is a $\Qbar_{p}$-point $P_{f}$ of $M$ with the property that for each $i = 1, \ldots, m$, we have that
\[
P_{f}|_{e_{i}(\mathbf{I}_{i})} = P_{f, i}.
\]
\item[(5)] Define $r: G_{\Q} \to \oplus_{i=1}^{m} \GL_{2}(\Frac(\mathbf{I}_{i}))$ to be the direct sum of the Galois representations attached to each of the $\mathbf{I}_{i}$.
Using the embeddings $e_{i}$ we can think of $r$ as having image in $\GL_{2m}(\Frac(M))$.
Denote $M_{P_{f}}$ the localization of $M$ at the kernel of $P_{f}$; then for each $f \in R$, the image of $r$ lands in $\GL_{2m}(M_{P_{f}})$, and so can be pushed forward through $P_{f}$ to obtain a representation $r_{f}: G_{\Q} \to \GL_{2m}(\Qbar_{p})$.
\end{enumerate}
\end{theorem}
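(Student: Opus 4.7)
The plan is to obtain the conclusion through a sequence of pigeonhole reductions against the infinite set of classical weight-one eigenforms arising from $\mathbf{I}$, followed by an application of Hida's Galois representation construction. First I would use \cref{lem:weight_one_hecke_field_bound} to observe that as $f$ ranges over this infinite set, the degree $[\Q(f):\Q(\epsilon)]$ is uniformly bounded above by $\rank_{\Lambda}(\mathbf{H}^{\ord})$; by pigeonhole, some single value $m$ of this degree is achieved on an infinite subset, which gives (1). For $f$ in this subset, \cref{cor:weight_one_conjugates} guarantees that every Galois conjugate $f_j$ of $f$ over its character field is again a classical, $p$-ordinary, weight-one eigenform, so by \cref{thm:hida_etale} together with \cref{prop:hida_theory_weight_one} each $f_j$ arises from a unique component of $\mathbf{H}^{\ord}$. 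Since $\mathbf{H}^{\ord}$ is finite over $\Lambda$ it has only finitely many components, so the multiset of component assignments attached to $\{f_2, \ldots, f_m\}$ takes only finitely many values as $f$ varies; a second pigeonhole produces an infinite subset $R$ on which this multiset together with a chosen ordering is constant. Labeling the resulting components $\mathbf{I}_2, \ldots, \mathbf{I}_m$, setting $\mathbf{I}_1 = \mathbf{I}$, and using the natural point $P_{f, 1}$ coming from $f = f_1$ then yields (2).

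For (3), I would fix an algebraic closure of $\Frac(\Lambda)$, choose embeddings of each $\Frac(\mathbf{I}_i)$ into it, and take $\Frac(M)$ to be the Galois closure over $\Frac(\Lambda)$ of the compositum of these subfields; letting $M$ be the integral closure of $\Lambda$ in $\Frac(M)$ produces a finite $\Lambda$-algebra together with the fixed embeddings $e_i : \mathbf{I}_i \to M$. Step (4) is the subtle heart of the construction: the tuple $(P_{f, 1}, \ldots, P_{f, m})$ determines a $\Qbar_p$-algebra map out of the $\Lambda$-algebra tensor product $\mathbf{I}_1 \otimes_\Lambda \cdots \otimes_\Lambda \mathbf{I}_m$, and this map factors through some irreducible component of $\Spec$ of that tensor product. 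Because there are only finitely many such components, a further pigeonhole allows me to pass to an infinite subset of $R$ on which this component $A_0$ is the same for every $f$. After possibly enlarging $\Frac(M)$ to contain the Galois closure of $\Frac(A_0)$, the integrality of $M$ over $A_0$ lets me lift each specialization $A_0 \to \Qbar_p$ to a $\Qbar_p$-point $P_f$ of $M$ whose restriction via each $e_i$ recovers $P_{f, i}$, yielding (4).

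Condition (5) then reduces largely to unwinding definitions and applying \cref{thm:hida_theory_galois_representation}(4). Define $r = \bigoplus_{i=1}^{m} \rho_{\mathbf{I}_i}$; via the fixed embeddings $e_i$ this representation naturally takes values in $\GL_{2m}(\Frac(M))$. For each $i$, \cref{thm:hida_theory_galois_representation}(4) guarantees that $\rho_{\mathbf{I}_i}$ can be realized with values in $\mathbf{I}_{i, \mathfrak{q}}$ for almost all primes $\mathfrak{q}$ of $\mathbf{I}_i$; since these exceptional primes contribute only a Zariski-closed subset of $\Spec(M)$ that is hit by only finitely many of the points $P_f$, removing those finitely many $f$ from $R$ (without disturbing infinitude) ensures that $r$ takes values in $\GL_{2m}(M_{P_f})$ for every remaining $f$ and thus pushes forward through $P_f$ to the desired $r_f: G_\Q \to \GL_{2m}(\Qbar_p)$. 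The dominant obstacle is step (4): coordinating the restrictions via the several embeddings $e_i$ into a single integral domain forces the additional pigeonhole on irreducible components of the tensor product along with careful tracking of the compatibility of the $P_{f, i}$ on the intersections $e_i(\mathbf{I}_i) \cap e_j(\mathbf{I}_j)$ inside $M$.
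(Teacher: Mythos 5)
Your steps (1)--(4) are correct and follow the same skeleton as the paper: pigeonhole on the conjugate count bounded by \cref{lem:weight_one_hecke_field_bound}, pigeonhole on the finitely many tuples of components, and passage to the Galois closure of the compositum of the $\Frac(\mathbf{I}_{i})$. Your mechanism for (4) differs mildly from the paper's: the paper takes $P_{f}$ to be an arbitrary extension to $M$ of the point of $\mathbf{I}$ giving $f$, uses transitivity of $\Gal(\Frac(M)/\Frac(\Lambda))$ on points in the fibers of $M/\Lambda$ to produce, for each $f$, a tuple of embeddings $\mathbf{I}_{i} \to M$ compatible with the $P_{f,i}$, and then pigeonholes over the finitely many such tuples; your pigeonhole over the irreducible components of $\mathbf{I}_{1} \otimes_{\Lambda} \cdots \otimes_{\Lambda} \mathbf{I}_{m}$ encodes the same finiteness (components correspond to Galois orbits of embedding tuples), provided you \emph{define} $e_{i}$ as the composite $\mathbf{I}_{i} \to A_{0} \hookrightarrow M$ rather than keeping embeddings fixed beforehand; since $\Frac(M)$ is Galois over $\Frac(\Lambda)$ and contains one copy of each $\Frac(\mathbf{I}_{i})$, it already contains a copy of $\Frac(A_{0})$, so no enlargement is needed.

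The genuine gap is in (5). \Cref{thm:hida_theory_galois_representation}(4) says only that for almost all primes $P$ of $\mathbf{I}_{i}$ the representation \emph{can be taken} to have values in $\mathbf{I}_{i,P}$; the integral model there is allowed to depend on $P$, so it does not directly give that your fixed model $r$ over $\Frac(M)$ has entries in $M_{P_{f}}$ away from a Zariski-closed (let alone finite) set: a priori, as $g$ ranges over $G_{\Q}$, the poles of the entries of $r(g)$ in the fixed model could accumulate into infinitely many bad points of $M$. The paper closes exactly this hole: it chooses a $G_{\Q}$-stable lattice $\mathcal{L} \subset \Frac(M)^{2m}$ (using compactness of $G_{\Q}$), shows $\End_{M}(\mathcal{L})$ is a finitely generated $M$-module (using that $M$ is noetherian), and deduces that only finitely many $\Qbar_{p}$-points of $M$ occur as poles of any $r(g)$; discarding the finitely many $f$ whose $P_{f}$ hits these points (each such point arises from only finitely many $f$, since each form has finitely many conjugates) gives (5). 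You need this, or an equivalent uniformity argument, to justify your sentence about the exceptional locus; citing Hida's pointwise statement alone does not suffice.
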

\begin{proof}
The theorem is an extended application of the pigeonhole principle.
We show the conclusions of the theorem one at a time; at each stage we refine the results from the previous step while maintaining an infinite set of weight one forms.

We know from \cref{lem:weight_one_hecke_field_bound} that any weight one form which is parameterized by $\mathbf{H}^{\ord}$ has at most $\rank_{\Lambda}(\mathbf{H}^{\ord})$ Galois conjugates over its character field.
Thus from our initial set of infinitely many classical weight one forms there must be an $m \leq \rank_{\Lambda}(\mathbf{H}^{\ord})$ which occurs infinitely often as the number of Galois conjugates over the character field.
Restrict to only those $f$ having exactly $m$ Galois conjugates over their character field, and call this set $R_{1}$.
This shows (1).

To show (2), we use that $\mathbf{H}^{\ord}$ has only finitely many irreducible components since it is finite over $\Lambda$, which is irreducible.
For a given $f \in R_{1}$, its $m$ Galois conjugates over its character field arise from some set of $m$ components of $\mathbf{H}^{\ord}$.
Since there are only finitely many possible such sets of components, one must occur for infinitely many $f \in R_{1}$.
Let $R_{2}$ be an infinite subset of $R_{1}$ for which all $f \in R_{2}$ have their Galois conjugates arising from the same set of $m$ components.
Pick this set of components and label them $\mathbf{I}_{1}, \ldots, \mathbf{I}_{m}$ such that (in some ordering) the conjugates $f_{1}, \ldots, f_{m}$ satisfy that $f_{i}$ arises from $\mathbf{I}_{i}$.
Without loss of generality we may assume that $\mathbf{I} = \mathbf{I}_{1}$ and $f = f_{1}$.
Note that for each $f \in R_{2}$ we have a $\Qbar_{p}$-point $P_{f, i}$ of $\mathbf{I}_{i}$ with the property that the specialization of $\mathbf{I}_{i}$ at $P_{f, i}$ is the system of Hecke eigenvalues of $f_{i}$.

The integral extension $M/\Lambda$ as in (3) can be constructed as follows.
Each $\mathbf{I}_{i}$ is an integral domain finite over $\Lambda$, so $\Frac(\mathbf{I}_{i})$ is a finite extension of $\Frac(\Lambda)$.
Take the Galois closure of the compositum of these fields $\Frac(\mathbf{I}_{i})$; this is some finite extension of $\Frac(\Lambda)$, and we take $M$ as the integral closure of $\Lambda$ inside that field.
If $P$ is the point of $\mathbf{I}$ giving rise to $f$, choose any extension of $P$ to a point $P_{f}$ of $M$.

For a given $f \in R_{2}$, we have points $P_{f, i}$ of $\mathbf{I}_{i}$ for $i = 1, \ldots, m$.
Since the Galois action is transitive on points in fibers of $M/\Lambda$, there is some embedding of $\mathbf{I}_{i}$ into $M$ such that $P_{f, i}$ is the restriction of $P_{f}$ to the image of $\mathbf{I}_{i}$.
As there are only finitely many embeddings $\mathbf{I}_{i} \to M$ for each $i$, there are only finitely many possible choices total.
Since for each $f \in R_{2}$ there is at least one choice of embeddings $\mathbf{I}_{i} \to M$ with the desired compatibility between points, and $R_{2}$ is infinite, there must be some choice of embeddings $e_{i}: \mathbf{I}_{i} \to M$ such that for an infinite subset $R_{3} \subseteq R_{2}$ we have the desired compatibility of points.
This shows (3) and (4) of the theorem.

As in the statement of the theorem we define $r$ to be the representation 
\[
r: G_{\Q} \to \oplus_{i=1}^{m} \GL_{2}(\Frac(\mathbf{I}_{i})) \overset{\oplus_{i=1}^{m} e_{i}}{\hookrightarrow} \GL_{2m}(\Frac(M))
\]
obtained as the direct sum of the Galois representations attached to each $\mathbf{I}_{i}$, viewed as having coefficients in $\Frac(M)$.
We would like to be able to specialize $r: G_{\Q} \to \GL_{2m}(\Frac(M))$ through the map $P_{f}: M \to \Qbar_{p}$ in order to recover the representations attached to each $f_{i}$; however without knowing that $M$ is a unique factorization domain it may not be possible to find a basis of $V = \Frac(M)^{2m}$ in which $r$ takes values in $\GL_{2m}(M)$.
For a given $P_{f}$ we can always extend the point $P_{f}$ to have domain $M_{P_{f}}$ (localization of $M$ at the kernel of $P_{f}$), so it will suffice to show that $r$ has coordinates in $M_{P_{f}}$ for an infinite subset of $R_{3}$.

The group $G_{\Q}$ is compact, so there is a lattice $\mathcal{L} \subset \Frac(M)^{2m}$ which is stable under the action of $G_{\Q}$ through $r$.
Each element of $r$ can thus be though of equally well as an element of $\End_{M}(\mathcal{L})$.
We claim that $\End_{M}(\mathcal{L})$ is a finitely generated $M$-module.
Let $a_{1}, \ldots, a_{n}$ be a set of generators for $\mathcal{L}$ as an $M$-module.
Given an $n \times n$ matrix $X = \{x_{i, j}\}$ with coefficients in $M$ we say that $X$ descends to $\mathcal{L}$ if the map given by $a_{i} \mapsto \sum_{j=1}^{n} x_{i, j} a_{j}$ defines an endomorphism of $\mathcal{L}$; note that for any endomorphism of $\mathcal{L}$ there is at least one such matrix.
The subset of $\End_{M}(M^{n})$ of those matrices which descend to $\mathcal{L}$ is an $M$-submodule.
Since $M$ is noetherian (it is a finite extension of the noetherian ring $\Lambda$) we have that any submodule of the finitely generated $\End_{M}(M^{n})$ is finitely generated.
In particular one such submodule surjects onto $\End_{M}(\mathcal{L})$, proving that it is finitely generated.

Viewing each generator of $\End_{M}(\mathcal{L})$ as an element of $\GL_{2m}(\Frac(M))$, we see that it has at most finitely many ``poles'', where by ``pole'' we mean a $\Qbar_{p}$ point $P$ of $M$ such that the entries of that element of $\GL_{2m}(\Frac(M))$ are not in $M_{P}$.
Since there are finitely many generators of $\End_{M}(\mathcal{L})$, each with finitely many poles, we see that there are at most finitely many $\Qbar_{p}$ points of $M$ which can arise as poles of an element $r(g)$.
After removing finitely many of the points in $R_{3}$ in order to avoid these poles, we obtain a set $R$ and have that the image of $r$ lands in $\GL_{2m}(M_{P_{f}})$ for each $f \in R$.
\end{proof}

The representation $r$ defined in \cref{thm:components} is what we'll use to propagate control of Hecke fields from weight one into regular weight.
Since our main focus will be on the specializations of $r$ through the primes $P_{f}$ of $M$, we set $r_{f}$ to be that specialization
\[
r_{f}: G_{\Q} \to \GL_{2m}(\Frac(M)) \overset{P_{f}}{\to} \GL_{2m}(\Qbar_{p})
\]
which is well-defined by part (5) of \cref{thm:components}.
We begin with some basic properties of the representations $r$ and $r_{f}$.
\begin{lemma}\label{lem:big_rep_properties}
The representation $r: G_{\Q} \to GL_{2m}(\Frac(M))$ satisfies the following properties.
\begin{enumerate}
\item[(1)] For primes $\ell \nmid  Np$, $r$ is unramified at $\ell$.
\item[(2)] For every $g \in G_{\Q}$, the characteristic polynomial of $r(g) \in \GL_{2m}(\Frac(M))$ has coefficients in $M$.
\item[(3)] For every $f \in R$, the representation $r_{f}: G_{\Q} \to \GL_{2m}(\Qbar_{p})$ is equal to the direct sum of the $p$-adic Galois representations attached to the conjugates $f_{1}, \ldots, f_{m}$ of $f$.
\end{enumerate}
\end{lemma}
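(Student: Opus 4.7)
The plan is to verify each of the three properties separately, largely by unwinding the definition of $r$ as the direct sum $\oplus_{i=1}^{m} e_i \circ \rho_{\mathbf{I}_i}$ and invoking the corresponding properties for the individual $\rho_{\mathbf{I}_i}$ from \cref{thm:hida_theory_galois_representation}.

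For (1), the argument is immediate: each $\rho_{\mathbf{I}_i}$ is unramified at primes $\ell \nmid Np$ by part (2) of \cref{thm:hida_theory_galois_representation}, and a direct sum of representations that are each trivial on an inertia subgroup is itself trivial on that inertia subgroup.

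For (2), I would first reduce to the claim that for each $i$, the characteristic polynomial of $\rho_{\mathbf{I}_i}(g)$ has coefficients in $\mathbf{I}_i$ (for then the characteristic polynomial of $r(g)$, being the product of the $m$ characteristic polynomials pushed into $M$ via the embeddings $e_i$, has coefficients in $M$). The claim in turn amounts to showing that the trace and determinant of $\rho_{\mathbf{I}_i}$ take values in $\mathbf{I}_i$ rather than merely in $\Frac(\mathbf{I}_i)$. On the dense set of Frobenius elements $\Frob_\ell$ for $\ell \nmid Np$, part (2) of \cref{thm:hida_theory_galois_representation} tells us that $\Tr \rho_{\mathbf{I}_i}(\Frob_\ell) = T_\ell$ and $\det \rho_{\mathbf{I}_i}(\Frob_\ell) = -\ell S_\ell$, which lie in $\mathbf{I}_i$. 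By part (4) of \cref{thm:hida_theory_galois_representation} the representation takes values in $\GL_2(\mathbf{I}_{i, P})$ for almost all primes $P$; in each such localization $\mathbf{I}_{i, P}$ is a DVR so the trace and determinant are continuous into a closed subring, and Chebotarev density together with continuity propagates the membership from Frobenius elements to all of $G_\Q$. Intersecting over almost all primes $P$ of $\mathbf{I}_i$ recovers $\mathbf{I}_i$, yielding the claim.

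For (3), the point is to use the compatibility of specializations arranged in parts (3) and (4) of \cref{thm:components}. By construction, restricting $P_f: M \to \Qbar_p$ along the embedding $e_i: \mathbf{I}_i \hookrightarrow M$ recovers the point $P_{f, i}$ of $\mathbf{I}_i$ which selects the Hecke eigensystem of $f_i$. By part (4) of \cref{thm:hida_theory_galois_representation}, together with the care taken at the end of the proof of \cref{thm:components} to remove the finitely many poles of the generators of $\End_M(\mathcal{L})$ so that $r$ actually lands in $\GL_{2m}(M_{P_f})$, the pushforward $P_{f, i} \circ \rho_{\mathbf{I}_i}$ is the $p$-adic Galois representation attached to $f_i$. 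Taking the direct sum over $i$ gives the desired identification $r_f = \oplus_{i=1}^{m} \rho_{f_i, p}$.

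The only nontrivial step is (2), and there the work is really just recording the standard fact that the pseudo-character attached to $\rho_{\mathbf{I}_i}$ takes values in $\mathbf{I}_i$; the other two parts are essentially formal consequences of the construction in \cref{thm:components}.
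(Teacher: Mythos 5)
Parts (1) and (3) of your argument are correct and are essentially the paper's: (1) follows at once from the direct-sum construction, and (3) is exactly the compatibility of the points $P_{f}$ and $P_{f,i}$ arranged in parts (3)--(5) of \cref{thm:components}.

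Part (2), however, has a genuine gap at its final step. From part (4) of \cref{thm:hida_theory_galois_representation} you get, for almost all primes $P$ of $\mathbf{I}_{i}$, that every trace and determinant lies in $\mathbf{I}_{i,P}$ (note that once the representation is known to take values in $\GL_{2}(\mathbf{I}_{i,P})$ this is automatic, so your Chebotarev/continuity step inside the localization does no work, and the DVR claim is neither needed nor automatic unless $\mathbf{I}_{i}$ is normal). But the intersection of the localizations $\mathbf{I}_{i,P}$ over \emph{almost all} primes does not recover $\mathbf{I}_{i}$: omitting even a single height one prime $Q$ admits elements of $\Frac(\mathbf{I}_{i})$ whose only ``pole'' is along $Q$ --- already for $\Lambda = \Z_{p}\llbracket T\rrbracket$ with $Q = (T)$, the element $1/T$ lies in $\Lambda_{P}$ for every other height one prime $P$. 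So your argument only places the characteristic polynomial coefficients in this strictly larger ring, not in $\mathbf{I}_{i}$, hence not in $M$; and membership in $M$ (integrality over $\Lambda$) is precisely what is used downstream, since the rigidity results of \cref{sec:rigidity} applied to the $A_{\ell,j}$ in \cref{lem:coefficient_rigidity} require a monic polynomial $R(T,X) \in \Lambda[X]$ satisfied by these elements. The paper avoids localizations altogether: the characteristic polynomial of $r(\Frob_{\ell})$ has coefficients in $M$ because the trace and determinant of each summand at Frobenius are Hecke operators lying in $\mathbf{I}_{i}$, and then continuity of $g \mapsto \det(XI - r(g))$, Chebotarev density of the $\Frob_{\ell}$, and closedness of $M[X]$ in $\Frac(M)[X]$ give the statement for all $g$ at once. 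Alternatively, you could repair your route using the lattice already produced in the proof of part (5) of \cref{thm:components}: since $r(g)$ preserves the finitely generated $M$-module $\mathcal{L}$, its eigenvalues are integral over $M$, and $M$ is by construction integrally closed in $\Frac(M)$, so the coefficients of the characteristic polynomial lie in $M$.
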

\begin{proof}
(1) is immediate as $r$ is constructed as a direct sum of representations which are unramified at primes $\ell \nmid Np$.
Part (2) is a consequence of continuity of the representation as we now show.
The characteristic polynomial map $G_{\Q} \to \Frac(M)[X]$ given by $g \mapsto \det(XI - r(g))$ is continuous since $r$ itself is continuous and taking characteristic polynomials is continuous.
For primes $\ell$ as above, each of the direct summands of $r$ has the property that characteristic polynomials of $\Frob_{\ell}$ land in $\mathbf{I}_{i}$; the trace and determinant are Hecke operators and so are in $\mathbf{I}_{i}$ rather than $\Frac(\mathbf{I}_{i})$.
Since the characteristic polynomial of the direct sum is simply the product of the characteristic polynomials, we see that the characteristic polynomial of $r(\Frob_{\ell})$ has coefficients in $M$.
Finally since the $\Frob_{\ell}$ are topologically dense in $G_{\Q}$ by the Cebotarev density theorem, we see that every element in the image of $r$ must have characteristic polynomial in $M[X]$ since it is a closed subset of $\Frac(M)[X]$.

Part (3) of this lemma is a consequence of parts (3) and (4) of \cref{thm:components}.
By our choice of embeddings we have that $P_{f}$ restricted to $\mathbf{I}_{i}$ produces the system of Hecke eigenvalues of $f_{i}$; hence $r: G_{\Q} \to \oplus_{i=1}^{2m} \GL_{2}(\Frac(\mathbf{I}_{i}))$ will specialize to the direct sum of the $\rho_{f_{i}, p}$.
\end{proof}

Since the representation $r$ is unramified at primes $\ell \nmid Np$, we introduce notation for the characteristic polynomial of $r(\Frob_{\ell})$.
Let
\[
A_{\ell}(X) = \sum_{j=0}^{2m} A_{\ell, j}X^{j} = \det(X I - r(\Frob_{\ell})).
\]
Note that the coefficients $A_{\ell, j}$ of $A_{\ell}(X)$ lie in $M$ as per \cref{lem:big_rep_properties}.
The key property of the representation $r$ is contained in the following theorem, where we show how the choices in the construction of $r$ lead to control over the field of definition of the specializations $r_{f}$. 
In particular studying the specializations $r_{f}$ will amount to studying the specialized characteristic polynomials of Frobenius $A_{\ell}(P_{f})(X) = \det(XI - r_{f}(\Frob_{\ell}))$.

\begin{theorem}\label{thm:big_rep_coefficient_field}
Suppose that $f \in R$ and $P_{f}$ is the corresponding $\Qbar_{p}$-point of $M$ as in \cref{thm:components}.
Then for a prime $\ell \nmid Np$ we have that the characteristic polynomial $A_{\ell}(P_{f})(X)$ of $r_{f}(\Frob_{\ell})$ has coefficients in the character field of $f$.
\end{theorem}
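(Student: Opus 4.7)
The plan is to unwind the construction of $r_{f}$ via part (3) of \cref{lem:big_rep_properties} and then observe that the coefficients of its characteristic polynomial of Frobenius are $G_{\Q(\epsilon_{f})}$-invariant symmetric combinations of the $\ell$-th Hecke eigenvalues of the Galois conjugates $f_{1}, \ldots, f_{m}$ of $f$ over its character field $\Q(\epsilon_{f})$.

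More concretely, here is the sequence of steps. First, appeal directly to \cref{lem:big_rep_properties}(3) to write
\[
r_{f} \;=\; \bigoplus_{i=1}^{m} \rho_{f_{i}, p},
\]
so that the characteristic polynomial factors as
\[
A_{\ell}(P_{f})(X) \;=\; \prod_{i=1}^{m} \det\!\bigl(XI - \rho_{f_{i}, p}(\Frob_{\ell})\bigr) \;=\; \prod_{i=1}^{m} \bigl(X^{2} - a_{\ell}(f_{i})X + \epsilon_{f}(\ell)\bigr).
\]
Here I use two things about each factor: the forms $f_{i}$ all have weight one (since they are Galois conjugates of the weight one form $f \in R$), so $\ell^{k-1} = 1$; and they all share the common nebentypus $\epsilon_{f}$, since Galois conjugation over the character field $\Q(\epsilon_{f})$ leaves the character unchanged.

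Next, I would identify the multiset $\{a_{\ell}(f_{i})\}_{i=1}^{m}$ as the full Galois orbit of $a_{\ell}(f)$ under $G_{\Q(\epsilon_{f})}$. This is built in by part (1) of \cref{thm:components} together with \cref{lem:hecke_field_galois_conjugates}: the $f_{i}$ are precisely the $[\Q(f):\Q(\epsilon_{f})] = m$ distinct $G_{\Q(\epsilon_{f})}$-conjugates of $f$, and the definition of Galois conjugation on eigenforms gives $a_{\ell}(f^{\sigma}) = \sigma(a_{\ell}(f))$ for every $\sigma \in G_{\Q(\epsilon_{f})}$. Hence the set $\{a_{\ell}(f_{i})\}$ is stable (as a multiset) under the action of $G_{\Q(\epsilon_{f})}$.

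Finally, expanding the product above shows that each coefficient of $A_{\ell}(P_{f})(X)$ in $X$ is a polynomial combination, with integer coefficients, of the $a_{\ell}(f_{i})$ and $\epsilon_{f}(\ell)$ that is symmetric in the $a_{\ell}(f_{i})$. Since $\epsilon_{f}(\ell) \in \Q(\epsilon_{f})$ by definition and the multiset of $a_{\ell}(f_{i})$ is $G_{\Q(\epsilon_{f})}$-stable, every such coefficient is fixed by $G_{\Q(\epsilon_{f})}$ and therefore lies in $\Q(\epsilon_{f})$, as required. There is no real obstacle here — the entire purpose of the choices in \cref{thm:components} (a single $m$, a single unordered set of components, and a single compatible system of embeddings $e_{i}$) was to arrange exactly this symmetrization, so the computation is essentially automatic once those choices are in place.
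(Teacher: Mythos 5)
Your proof is correct, and it shares the paper's skeleton: both start from the decomposition $r_{f} = \oplus_{i=1}^{m}\rho_{f_{i},p}$ coming from \cref{thm:components} and \cref{lem:big_rep_properties}(3), so that $A_{\ell}(P_{f})(X)$ factors over the $m$ conjugates of $f$ over $\Q(\epsilon)$, and both then argue that a symmetric expression in the conjugate data descends to $\Q(\epsilon)$. Where you differ is in how the descent is carried out. The paper splits each quadratic factor into Frobenius eigenvalues $\alpha_{i},\beta_{i}$, notes that $\alpha^{n}+\beta^{n}\in\Q(f)$, identifies $\sum_{i}(\alpha_{i}^{n}+\beta_{i}^{n})$ with the field trace $\Trace^{\Q(f)}_{\Q(\epsilon)}(\alpha^{n}+\beta^{n})$, and then converts power sums into elementary symmetric functions via Newton's identities. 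You instead keep the quadratic factors unsplit, observe that all $f_{i}$ share the nebentypus $\epsilon$ (and, in weight one, determinant $\epsilon(\ell)$ --- though note this simplification is not needed, since $\epsilon(\ell)\ell^{k-1}\in\Q(\epsilon)$ in any weight), and conclude by direct Galois invariance: since $G_{\Q(\epsilon)}$ permutes the full set $f_{1},\ldots,f_{m}$ of conjugates, it permutes the multiset $\{a_{\ell}(f_{i})\}$ (with multiplicities, which can occur when $\Q(\epsilon)(a_{\ell}(f))\subsetneq\Q(f)$ --- your parenthetical ``as a multiset'' correctly covers this), so every coefficient of the product is $G_{\Q(\epsilon)}$-fixed and hence lies in $\Q(\epsilon)$. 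Your route is a little more elementary --- no choice of eigenvalues, no trace computation, no appeal to power sums generating the symmetric functions --- while the paper's trace formulation makes explicit the averaging-over-$\Q(f)/\Q(\epsilon)$ structure that motivates the construction; the two arguments prove the same statement with the same inputs.
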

\begin{proof}
Let $f_{1}, f_{2}, \ldots, f_{m}$ be the $m$ Galois conjugates of $f = f_{1}$.
Let $\Q(\epsilon)$ and $\Q(f)$ be, respectively, the character and Hecke fields of $f$.
For each $f_{i}$, let $\alpha_{i}$, $\beta_{i}$ be the eigenvalues of $\rho_{f_{i}, p}(\Frob_{\ell})$ (so that the $T_{\ell}$-eigenvalue of $f_{i}$ is equal to $\alpha_{i} + \beta_{i}$).
We then have by \cref{lem:big_rep_properties} that $A_{\ell}(P_{f})(X)$ factors as
\begin{align*}
A_{\ell}(P_{f})(X)		& = \prod_{i=1}^{m} (X - \alpha_{i})(X - \beta_{i})
\end{align*}
since $r_{f}$ is the direct sum of the representations $\rho_{f_{i}, p}$.
The coefficients $A_{\ell, j}(P_{f})$ of $A_{\ell}(P_{f})(X)$ are thus (up to sign) the elementary symmetric polynomials of degree $2m$ evaluated at the $\alpha_{i}$ and $\beta_{i}$.

We know that $\alpha, \beta$ are in a degree at most $2$ extension of $\Q(f)$ since they satisfy a degree 2 polynomial with coefficients in $\Q(f)$ (the characteristic polynomial of $\rho_{f, p}(\Frob_{\ell})$).
For any positive integer $n$, the expression $\alpha^{n} + \beta^{n}$ is invariant under switching these two roots, hence it must itself be in $\Q(f)$.
Therefore we have that the power sum $\sum_{i=1}^{m} \alpha_{i}^{n} + \beta_{i}^{n}$ is in $\Q(\epsilon)$, since it is a field trace:
\begin{align*}
\sum_{i=1}^{m} \alpha_{i}^{n} + \beta_{i}^{n}	& = \Trace_{\Q(\epsilon)}^{\Q(f)}(\alpha^{n} + \beta^{n}).
\end{align*}

Since both the power sums and the elementary symmetric polynomials are generating sets (over $\Q$ or extensions thereof) for the space of symmetric polynomials, all of the elementary symmetric polynomials can be expressed as polynomial combinations with rational coefficients of the power sums.
We've shown that the power sums of the $\alpha_{i}, \beta_{i}$ all lie in the character field $\Q(\epsilon)$, so the coefficients of $A_{\ell}(P_{f})(X)$ will also lie in $\Q(\epsilon)$.
\end{proof}

\subsection{Rigidity of Frobenius characteristic polynomials}\label{sec:big_rep_rigidity}

The goal is this section is to show that the coefficients of the characteristic polynomial of $r(\Frob_{\ell})$ are constrained as in \cref{thm:many_roi_rigidity}.
We've seen in the previous section that the specialized characteristic polynomials take values in cyclotomic fields (the characteristic polynomial of $r_{f}(\Frob_{\ell})$ has coefficients in the character field of $f$).
Since these characteristic polynomials are also necessarily integral, the coefficients must be \emph{cyclotomic integers}, that is, elements of $\Z[\zeta]$ for some root of unity $\zeta$.
Following Cassels \cite{cassels} and Loxton \cite{loxton} we define the following quantities for a cyclotomic integer $\alpha$:
\begin{itemize}
\item $N(\alpha) = $ the minimal natural number $n$ such that $\alpha$ can be written as a sum of $n$ roots of unity.
\item $\house{\alpha} = $ the maximum of the complex absolute values of $\alpha$.
This is called the \emph{house} of $\alpha$.
\end{itemize}
The following theorem of Loxton relates these two measures of the size of $\alpha$; this is the key link between the bounds we know on the sizes of Hecke eigenvalues and the rigidity results of \cref{sec:bounded_sum_rigidity}.

\begin{theorem}[Loxton, Theorem 1 of \cite{loxton}]\label{thm:loxton}
Choose a real number $d$ with $d > \log{(2)}$.
Then there is a positive constant $c$ depending only on $d$ such that if $\alpha$ is a cyclotomic integer with 
\[
N(\alpha) = n
\]
then 
\[
\house{\alpha}^{2} \geq c\cdot n \cdot\exp(-d \log{(n)}/\log{(\log{(n)})}).
\]
\end{theorem}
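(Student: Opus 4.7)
My plan is to bound $\house{\alpha}^2$ from below by computing the trace of $\alpha\bar\alpha$ and carefully handling the off-diagonal cross terms. Writing $\alpha = \zeta_1 + \cdots + \zeta_n$ as a shortest expression and letting $m$ be the least positive integer with $\alpha \in \Q(\mu_m)$, the starting identity is
\[
\alpha \bar\alpha = n + \sum_{i \neq j} \zeta_i \bar\zeta_j.
\]
For any embedding $\sigma: \Q(\mu_m) \hookrightarrow \C$ we have $\sigma(\alpha\bar\alpha) = |\sigma(\alpha)|^2 \leq \house{\alpha}^2$, so summing over the $\phi(m)$ embeddings gives $\Tr_{\Q(\mu_m)/\Q}(\alpha\bar\alpha) \leq \phi(m) \cdot \house{\alpha}^2$. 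Since $\Tr_{\Q(\mu_m)/\Q}(\xi)/\phi(m) = \mu(r)/\phi(r)$ for a primitive $r$-th root of unity $\xi$, dividing through yields
\[
\house{\alpha}^2 \geq n + \sum_{i \neq j} \frac{\mu(r_{ij})}{\phi(r_{ij})},
\]
where $r_{ij}$ denotes the multiplicative order of $\zeta_i \bar\zeta_j$.

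The main difficulty is controlling the off-diagonal sum, which in principle could cancel nearly all of the $n$ contribution: if many pairs had $r_{ij} = 2$, each would contribute $-1$ and the bound would collapse. However, minimality of the expression $\alpha = \sum \zeta_i$ rules out the simplest cancellations --- no two $\zeta_i$ can be negatives of one another, and more generally no short subsum can vanish, since we could then produce a shorter expression for $\alpha$. Beyond these local constraints, the global contribution of the off-diagonal terms is governed by the combinatorics of vanishing sums of roots of unity (the Conway--Jones classification). Loxton's refinement of Cassels' earlier estimates extracts the subexponential factor by grouping the $\zeta_i$ according to the prime divisors of their orders and optimizing an entropy-like trade-off between the richness of this prime support and the amount of cancellation it can generate.

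Concretely, I would first aim for the weaker bound $\house{\alpha}^2 \geq c\, n/(\log n)^C$ of Cassels, which already follows from the trace argument combined with an elementary pigeonhole on the prime factorization of $m$ and the minimality constraints. The subexponential exponent in the theorem statement requires a more delicate book-keeping of the interplay between distinct prime levels in $m$, and I expect this quantitative optimization to be the main obstacle. Replicating the stated lower bound with arbitrary exponent exceeding $\log 2$ would require following Loxton's induction on the prime support of the orders of the $\zeta_i$, which is the technical heart of his paper and the step I would not expect to reproduce from scratch.
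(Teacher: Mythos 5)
There is a genuine gap here, and you name it yourself: the statement to be proved is precisely the subexponential lower bound $\house{\alpha}^{2} \geq c\, n \exp(-d\log n/\log\log n)$, and your proposal only carries out (in outline) the opening trace computation and then defers "Loxton's induction on the prime support of the orders of the $\zeta_i$" as "the step I would not expect to reproduce from scratch." That step is not a technical refinement of the argument you give --- it is the entire content of the theorem beyond Cassels' earlier, weaker estimate. The trace identity $\house{\alpha}^{2} \geq n + \sum_{i\neq j}\mu(r_{ij})/\phi(r_{ij})$ is correct (complex conjugation commutes with every embedding since $\Q(\mu_m)/\Q$ is abelian, and minimality does rule out $r_{ij}=2$), but from that point the off-diagonal sum still contains up to $n(n-1)$ terms each as negative as $-1/(q-1)$ for small primes $q$, so without the grouping-by-prime-level analysis the bound can a priori degrade all the way to something trivial; nothing in your sketch controls this, and even the intermediate Cassels-type bound is only asserted, not derived. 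So as a proof of the stated theorem the proposal is incomplete.

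For context, the paper does not prove this statement either: it is imported verbatim as Theorem 1 of Loxton's paper and used as a black box (via \cref{thm:loxton}) to convert the Archimedean bound on $\house{A_{\ell,j}(P_f)}$ into a uniform bound on $N(A_{\ell,j}(P_f))$. So there is no internal proof to compare against; the appropriate "proof" in the context of this paper is the citation itself, and a from-scratch argument would amount to reproving Loxton's theorem, which your sketch correctly identifies as requiring the full machinery of his paper (building on the Conway--Jones/Cassels circle of ideas) rather than the trace argument alone.
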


As explained in the introduction of \cite{loxton}, this theorem allows us to bound $N(\alpha)$ if we have a bound on $\house{\alpha}$.
In particular if we know that
\[
\house{\alpha}^{2} < c \cdot n \cdot \exp(-d \log{(n)}/\log{(\log{(n)})})
\]
then it must be the case that $N(\alpha) < n$.
Since the expression $c \cdot n \cdot \exp(-d \log{(n)}/\log{(\log{(n)})})$ is increasing in $n$, if we have an absolute bound $\house{\alpha}^{2}$ for some collection of cyclotomic integers $\alpha$, then it forces an absolute bound on the $N(\alpha)$.

\begin{lemma}\label{lem:bounded_house}
There is a constant $C_{\ell, j}$, depending only on $\ell$ and $j$, such that for each $f \in R$ we have
\[
\house{A_{\ell, j}(P_{f})} \leq C_{\ell, j}.
\]
\end{lemma}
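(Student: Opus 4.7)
The plan is to unpack the coefficient $A_{\ell,j}(P_f)$ as a symmetric function in Frobenius eigenvalues, bound each Frobenius eigenvalue uniformly using the weight-one input, and then use that the number of eigenvalues involved is bounded by the rank of the Hecke algebra.

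First, I would invoke \cref{thm:big_rep_coefficient_field} and \cref{lem:big_rep_properties}(3) to write
\[
A_{\ell}(P_{f})(X) = \prod_{i=1}^{m} (X - \alpha_{i})(X - \beta_{i}),
\]
where $\alpha_{i}, \beta_{i}$ are the eigenvalues of $\rho_{f_{i}, p}(\Frob_{\ell})$; so $A_{\ell, j}(P_f)$ equals (up to sign) the $(2m-j)$-th elementary symmetric polynomial in the multiset $\{\alpha_{1}, \beta_{1}, \ldots, \alpha_{m}, \beta_{m}\}$. Since $A_{\ell, j}(P_f)$ lies in the character field $\Q(\epsilon)$, bounding its house amounts to bounding $|\tau(A_{\ell, j}(P_f))|_{\C}$ for every embedding $\tau: \Q(\epsilon) \hookrightarrow \C$. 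For such a $\tau$, the conjugates $\tau(\alpha_{i}), \tau(\beta_{i})$ are precisely the Frobenius eigenvalues at $\ell$ attached to a Galois conjugate $f_{i}^{\sigma}$ of $f_{i}$, which is itself a classical weight one eigenform.

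Second, I would apply the non-Ramanujan bound \cref{thm:trivial_coefficient_bound} in weight $k=1$, which yields $|a_{\ell}(f_{i}^{\sigma})|_{\C} \leq 2\ell^{3/2}$ uniformly in $f$, $i$, and $\sigma$. Since $f_{i}^{\sigma}$ has weight one, the product $\tau(\alpha_{i}) \tau(\beta_{i}) = \epsilon^{\sigma}(\ell) \cdot \ell^{k-1} = \epsilon^{\sigma}(\ell)$ is a root of unity with complex absolute value $1$. The two eigenvalues are the roots of a quadratic with known coefficients, so the quadratic formula gives
\[
|\tau(\alpha_{i})|_{\C}, |\tau(\beta_{i})|_{\C} \leq \tfrac{1}{2}\bigl(2\ell^{3/2} + \sqrt{4\ell^{3} + 4}\bigr) \leq 2\ell^{3/2} + 1,
\]
a bound depending only on $\ell$. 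Call this bound $B_{\ell}$.

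Finally, expanding the elementary symmetric polynomial as a sum of at most $\binom{2m}{j}$ products of $j$ terms, each of complex absolute value at most $B_{\ell}$, gives
\[
|\tau(A_{\ell, j}(P_f))|_{\C} \leq \binom{2m}{j} B_{\ell}^{j}.
\]
By \cref{thm:components}(1), $m \leq \rank_{\Lambda}(\mathbf{H}^{\ord})$ is uniform in $f \in R$, so we may define
\[
C_{\ell, j} = \binom{2 \rank_{\Lambda}(\mathbf{H}^{\ord})}{j} B_{\ell}^{j},
\]
which depends only on $\ell$ and $j$ (the dependence on $\rank_{\Lambda}(\mathbf{H}^{\ord})$ is absorbed since the Hecke algebra is fixed throughout). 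There is no real obstacle here; the only subtle point is keeping track of Galois conjugates when passing between $\Q(\epsilon)$-valued coefficients and their complex embeddings, which is why we need the weight-one Archimedean bound to be insensitive to Galois conjugation — and indeed \cref{thm:trivial_coefficient_bound} applies to every normalized eigenform, in particular to every Galois conjugate of every $f_{i}$.
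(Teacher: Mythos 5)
Your argument is correct and follows essentially the same route as the paper: bound the Frobenius eigenvalues of all Galois conjugates of the weight one forms using the weak Archimedean bound of \cref{thm:trivial_coefficient_bound}, then bound the symmetric-polynomial expression for $A_{\ell, j}(P_{f})$ coming from \cref{thm:big_rep_coefficient_field} by the triangle inequality, with uniformity in $f$ coming from the fixed $m \leq \rank_{\Lambda}(\mathbf{H}^{\ord})$. The only (harmless) slip is in the final expansion: the $(2m-j)$-th elementary symmetric polynomial is a sum of $\binom{2m}{j}$ products of $2m-j$ (not $j$) factors, so the exponent in your constant should be $2m-j$; this does not affect the conclusion.
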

\begin{proof}
This follows from the fact that the Frobenius eigenvalues $\alpha$ and $\beta$ of $\rho_{f, p}(\Frob_{\ell})$ have house bounded by a polynomial in $\ell$ by \cref{thm:trivial_coefficient_bound}.
The construction in the proof of \cref{thm:big_rep_coefficient_field} shows that $A_{\ell, j}(P_{f})$ is a polynomial expression with rational coefficients in $\alpha$ and $\beta$ and their Galois conjugates, which all satisfy the same Archimedean bound coming from \cref{thm:trivial_coefficient_bound}.
Applying the triangle inequality liberally to the expression for $A_{\ell, j}(P_{f})$ we obtain a bound on $\house{A_{\ell, j}(P_{f})}$ which is polynomial in the bound on $\house{\alpha}, \house{\beta}$.
Since this polynomial expression in $\alpha$ and $\beta$ and their conjugates is the same for all $f$ in $R$ , and the bound on $\house{\alpha}, \house{\beta}$ is the same for all $f$ in $R$, we obtain a uniform (in $f$) upper bound on $\house{A_{\ell, j}(P_{f})}$.
\end{proof}

\begin{lemma}\label{lem:coefficient_rigidity}
Fix a prime $\ell$ of $F$ such that $\ell \nmid N p$.
Then for each $j$ in the range $0 \leq j \leq 2m$ the coefficient $A_{\ell, j}$ of the characteristic polynomial of $r(\Frob_{\ell})$ satisfies the assumptions of \cref{thm:many_roi_rigidity}.
\end{lemma}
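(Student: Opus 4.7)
The plan is to directly verify each of the hypotheses of Theorem \ref{thm:many_roi_rigidity} for the element $F = A_{\ell, j} \in M$, by extracting the infinite set $S$ and the points $P_\zeta$ from the set $R$ of weight one specializations supplied by Theorem \ref{thm:components}. For each $f \in R$, the point $P_f$ of $M$ restricts on $\Lambda$ to $P_{1, \zeta_f}\colon T \mapsto \zeta_f - 1$, where $\zeta_f = \epsilon_f(1 + p) \in \mu_{p^\infty}$ is the image of $1+p$ under the nebentypus $\epsilon_f$ of $f$. I would first observe that $\{\zeta_f : f \in R\}$ is infinite: the Hecke algebra $\mathbf{H}^{\ord}$ is finite over $\Lambda$, so at most $\rank_{\Lambda}(\mathbf{H}^{\ord})$ forms of $R$ can share a given $\zeta_f$, and $R$ is infinite by Theorem \ref{thm:components}.

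The next step is to note that $A_{\ell, j}(P_f)$ is a cyclotomic integer. By Theorem \ref{thm:big_rep_coefficient_field} it lies in the character field $\Q(\epsilon_f) \subseteq \Q(\mu_{N p^\infty})$, and it is an algebraic integer because it is (up to sign) an elementary symmetric function of the Frobenius eigenvalues of the representations $\rho_{f_i, p}$ appearing in $r_f$, which are algebraic integers. Combining Lemma \ref{lem:bounded_house} with Loxton's Theorem \ref{thm:loxton} then yields a constant $B$ depending only on $\ell$ and $j$ such that $N(A_{\ell, j}(P_f)) \leq B$ for every $f \in R$. Hence, possibly after adding trivial pairs summing to zero to achieve a uniform number of summands (and absorbing the resulting increase into a new value of $B$), we may write
\[
A_{\ell, j}(P_f) = \sum_{i=1}^{B} \mu_{f, i}
\]
where each $\mu_{f, i}$ is a root of unity.

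The final step is to separate tame and $p$-power parts and apply a pigeonhole. Each $\mu_{f, i}$ lies in some cyclotomic field, and a minimal sum decomposition of a cyclotomic integer of the form $A_{\ell, j}(P_f)$ can be taken with all root-of-unity summands belonging to $\mu_{N'} \cdot \mu_{p^\infty}$, where $N'$ depends only on $N$ and $B$; this uses either a refinement of Loxton's argument or the Conway--Jones--Lam--Leung classification of minimal vanishing sums of roots of unity. Each $\mu_{f, i}$ then factors as $\omega_{f, i}\, \zeta_f^{e_{f, i}}$ with $\omega_{f, i} \in \mu_{N'}$. The tuples $(\omega_{f, 1}, \ldots, \omega_{f, B})$ take values in the finite set $\mu_{N'}^B$, so by the pigeonhole principle there is an infinite subset $R' \subseteq R$ on which this tuple is constant, equal to some $(c_1, \ldots, c_B)$. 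Letting $\xi$ be a primitive $N'$-th root of unity, $S = \{\zeta_f : f \in R'\}$ (still infinite), and $P_\zeta \coloneqq P_f$ for a chosen $f \in R'$ with $\zeta_f = \zeta$, we obtain the required identity $A_{\ell, j}(P_\zeta) = \sum_{i=1}^{B} c_i\, \zeta^{e_{\zeta, i}}$ with $c_i \in \Z[\xi]$. The main obstacle is the uniform control of the tame roots-of-unity part of the decomposition; this is a purely combinatorial statement about cyclotomic integers whose proof must be supplied beyond the house bound that Loxton's theorem delivers.
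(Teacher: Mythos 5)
Your proposal is correct and follows essentially the same route as the paper: cyclotomic integrality of $A_{\ell, j}(P_{f})$ via \cref{thm:big_rep_coefficient_field}, the uniform bound on the number of root-of-unity summands via \cref{lem:bounded_house} combined with Loxton's \cref{thm:loxton}, and a pigeonhole over the finitely many tame possibilities to produce the fixed $\xi$, the coefficients $c_{i}$, and the infinite set $S$ required by \cref{thm:many_roi_rigidity}. The one step you flag as an obstacle --- arranging the summands to lie in $\mu_{N'} \cdot \mu_{p^{\infty}}$ with uniformly controlled tame part --- is precisely the point the paper's proof passes over silently (it simply considers writings of an integral element of $\Q(\epsilon)$ as a sum of $B$ roots of unity that are powers of $\xi$ times $p$-power roots of unity), so your explicit appeal to a Conway--Jones/Lam--Leung-type argument is a legitimate way of making that step precise rather than a departure from the paper's argument.
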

\begin{proof}
For any $f \in R$, we have by \cref{thm:big_rep_coefficient_field} that $A_{\ell, j}(P_{f})$ is in the character field $\Q(\epsilon)$ which is a cyclotomic field.
Since $A_{\ell, j}(P_{f})$ is an elementary symmetric polynomial evaluated at integral inputs (the eigenvalues of $\rho_{f, p}(\Frob_{\ell})$ are integral), it is integral, and hence is a cyclotomic integer.

Fix $j$, and let $C_{\ell, j}$ be the upper bound on all $\house{A_{\ell, j}(P_{f})}$ established in \cref{lem:bounded_house}.
Choose $n$ sufficiently large so that
\[
C_{\ell, j}^{2} < c \cdot n \cdot \exp(-\log{(n)}/\log{(\log{(n)})})
\]
where $c$ is the constant associated to $d = 1 > \log{(2)}$ of \cref{thm:loxton}.
Choosing such an $n$ is possible since the function on the right is unbounded in $n$.
\Cref{thm:loxton} then guarantees for us that $N(A_{\ell, j}(P_{f})) < n $, i.e. each $A_{\ell, j}(P_{f})$ can be written as a sum of fewer than $n$ roots of unity.

Let $\xi$ be a root of unity that generates the tame (i.e. prime to $p$) part of the character fields $\Q(\epsilon)$ (changing $f$ only changes the $p$-power roots of unity present).
Among the finitely many combinations of $B < n$ and possible powers of $\xi$ used to write an integral element of $\Q(\epsilon)$ as a sum of $B$ roots of unity, we pick one that occurs infinitely often among the $A_{\ell, j}(P_{f})$ for $f \in R$.
For the subset $R'$ of $R$ where this combination occurs, we let $B$ and $c_{1}, \ldots, c_{B}$ be the chosen values, and
\[
S = \{\zeta \in \mu_{p^{\infty}}: \text{ there is $f \in R'$ such that $P_{f}$ extends the point $P_{1, \zeta}$ of $\Lambda$}\}.
\]
With this data $A_{\ell, j}$ satisfies the assumptions of \cref{thm:many_roi_rigidity} and hence is a linear combination of exponential power series.
\end{proof}
\section{Bounded Hecke fields}\label{sec:bounded}

Now that we have constructed our representation $r$ and controlled the form of its characteristic polynomials of Frobenius using the rigidity results of \cref{sec:rigidity}, we are in a good position to specialize in regular weight.
The advantage of regular weight is that we can apply results of Hida on the complexity of Hecke fields associated to non-CM ordinary families in order to conclude that our family has CM.
The flavour of Hida's results is that if the Hecke fields of the forms in an ordinary family are sufficiently complicated (as measured by their degree relative to the $p$-cyclotomic extension $\Q(\mu_{p^{\infty}})$), then the family cannot have complex multiplication.
Put another way, if  the Hecke fields of an ordinary family are sufficiently bounded then that family has CM.

Hida has published several variations on theorems of this flavour.
In \cref{sec:hida_bounded} we sketch the proof of the version of this result that we use.
We then assemble the results of \cref{sec:construction} together with Hida's Hecke field result to prove our main theorem in \cref{sec:proof}.

We keep the notation introduced at the start of \cref{sec:construction}.
Most importantly $\mathbf{H}^{\ord}$ is a $\Lambda$-adic Hecke algebra parametrizing $p$-ordinary modular forms having a fixed tame level and character.

\subsection{Hida's results on bounded Hecke fields}\label{sec:hida_bounded}

Hida has proven several variations on theorems of this flavour (see \cite{hida_hecke_fields_of_analytic_families_of_modular_forms}, \cite{hida_hecke_fields_of_hilbert_modular_analytic_families}, \cite{hida_transcendence}, \cite{hida_growth_of_hecke_fields}).
In this section we build off of \cite{hida_transcendence} as it works with Hecke fields away from $p$ (i.e. degrees of $a_{\ell}$ for $\ell\nmid N p$ rather than $a_{p}$).
We found it to be more convenient to take this approach rather an approach that relies on bounding $a_{p}$, although in principle such a strategy could also work in our situation.
We offer a sketch of the proof of this theorem; the main idea is to use \cref{thm:one_roi_rigidity} to find two eigenforms $f$ and $g$ whose $p$-adic Galois representations are ``too similar'' unless $\mathbf{I}$ is a CM family.

Of course we could also attempt to use this result directly for elliptic modular forms of weight one, as the Ramanujan conjecture is known for these forms!
However, our goal is to access our main result without utilizing the Ramanujan conjecture in low weight, so that there is the possibility of our strategy generalizing to the case of Hilbert modular forms of partial weight one.

\begin{theorem}[Hida, Theorem 3.1 of \cite{hida_transcendence}]\label{thm:hida_cm_iff_hecke_field}
Suppose that we are given the following data:
\begin{enumerate}
\item a set $\Sigma$ of primes of $\Q$ of positive density
\item for each $\ell \in \Sigma$ a constant $C_{\ell}>0$
\item an infinite set of $p$-power roots of unity $S$
\item a fixed integer $k \geq 2$
\item a reduced, irreducible component $\mathbf{I} = \mathbf{H}^{\ord}/\frakP$ of $\mathbf{H}^{\ord}$
\item for each $\zeta \in S$, a point $P_{k, \zeta}$ of $\mathbf{I}$ extending the point $P_{k, \zeta}$ of $\Lambda$
\end{enumerate}
with the property that for each $\zeta \in S$, the specialization $\mathbf{I}(P_{k, \zeta})$ (which is a classical modular eigenform $f_{\zeta}$) has its Hecke fields satisfying the following bounds
\[
[\Q(\mu_{p^{\infty}}, a_{\ell}(f_{\zeta})): \Q(\mu_{p^{\infty}})] \leq C_{\ell}
\]
for each $\ell \in \Sigma$.
Then $\mathbf{I}$ has complex multiplication.
\end{theorem}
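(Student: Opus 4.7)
The strategy is to feed the bounded Hecke field hypothesis, via the Ramanujan conjecture in weight $k\geq2$, into the rigidity principle of \cref{thm:one_roi_rigidity}, obtaining an extremely rigid exponential description of the Frobenius eigenvalues of $\rho_{\mathbf{I}}$. The CM conclusion is then extracted from this rigid description by a self-twist argument.

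Fix $\ell\in\Sigma$. For each $\zeta\in S$ the specialization $f_{\zeta}$ has weight $k\geq 2$, so by \cref{thm:ramanujan} the Frobenius eigenvalues $\alpha_{\ell}(f_{\zeta}),\beta_{\ell}(f_{\zeta})$ of $\rho_{f_{\zeta},p}(\Frob_{\ell})$ are $\ell$-Weil numbers of weight $k-1$. These eigenvalues lie in a quadratic extension of the Hecke field of $f_{\zeta}$, which by hypothesis has bounded degree over $\Q(\mu_{p^{\infty}})$. A standard consequence of Kronecker's theorem (Corollary~2.2 of \cite{hida_transcendence}) is that the set of $\ell$-Weil numbers of weight $k-1$ of bounded degree over $\Q(\mu_{p^{\infty}})$ is finite modulo multiplication by roots of unity in $\mu_{p^{\infty}}$. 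Pigeonholing over $\zeta\in S$ and possibly swapping the roles of $\alpha$ and $\beta$, I may refine $S$ to an infinite subset so that there is a single $\ell$-Weil number $w_{\ell}$ with $\alpha_{\ell}(f_{\zeta})/w_{\ell}\in\mu_{p^{\infty}}$ for every $\zeta\in S$.

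Next, I would view $\alpha_{\ell}$ as an element of the integral closure $M$ of $\mathbf{I}$ inside the splitting field of the characteristic polynomial $X^{2}-T_{\ell}X+\ell S_{\ell}$ of $\rho_{\mathbf{I}}(\Frob_{\ell})$ from \cref{thm:hida_theory_galois_representation}. The specialization points $P_{k,\zeta}$ of $\Lambda$ correspond under the linear change of variable $T'=(1+T)(1+p)^{-(k-1)}-1$ to points $T'\mapsto\zeta-1$. In the variable $T'$ the algebraic power series $\alpha_{\ell}/w_{\ell}$ satisfies the hypotheses of \cref{thm:one_roi_rigidity}, so
\[
\alpha_{\ell}=c_{\ell}(1+T)^{e_{\ell}}
\]
for some constant $c_{\ell}\in\overline{\Q}_{p}$ and $e_{\ell}\in\Z_{p}$; the same argument gives $\beta_{\ell}=c_{\ell}'(1+T)^{f_{\ell}}$. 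Iterating over $\ell\in\Sigma$ (a further diagonal pigeonholing ensures the refined sets $S_{\ell}$ all contain a common infinite subset) yields such a description at every prime in $\Sigma$ simultaneously.

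Finally, the rigid description forces a self-twist. For two specializations $f_{\zeta},f_{\zeta'}$ in the refined $S$ of common weight $k$, the Frobenius traces satisfy $a_{\ell}(f_{\zeta'})=\chi_{\zeta,\zeta'}(\ell)\cdot a_{\ell}(f_{\zeta})$ for all $\ell\in\Sigma$, where $\chi_{\zeta,\zeta'}$ takes values in $\mu_{p^{\infty}}$ determined by $\zeta^{e_{\ell}},\zeta'^{e_{\ell}},\zeta^{f_{\ell}},\zeta'^{f_{\ell}}$. Since $\Sigma$ has positive density, Chebotarev combined with the determinant relation $\alpha_{\ell}\beta_{\ell}=\ell S_{\ell}$ promotes this to a global identity $\rho_{f_{\zeta'}}\cong\rho_{f_{\zeta}}\otimes\chi$ for a finite order character $\chi$; choosing $\zeta\neq\zeta'$ so that $\chi$ is nontrivial, applying the isomorphism twice yields $\rho_{f_{\zeta}}\cong\rho_{f_{\zeta}}\otimes\chi^{2}$, forcing $\rho_{f_{\zeta}}$ to be induced from the fixed field of $\ker\chi^{2}$, a quadratic field which must be imaginary by the oddness of $\rho_{f_{\zeta}}$. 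By \cref{prop:cm_component_unique} this forces $\mathbf{I}$ itself to be a CM component.

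\textbf{Main obstacle.} The hard step will be the last one: matching the local exponential descriptions at each $\ell\in\Sigma$ into a single global twist relation, and in particular ensuring that the twisting ``character'' $\chi$ coming from the rigid description is realized by an actual Galois character rather than merely a formal system of Frobenius values. This requires careful bookkeeping of the exponents $e_{\ell},f_{\ell}$ and of the constants $c_{\ell},c_{\ell}'$ so that they vary compatibly in $\ell$, and exploiting the ordinarity of $\rho_{\mathbf{I}}$ at $p$ (the triangular shape of \cref{thm:hida_theory_galois_representation}(3)) to pin down the local behaviour of the twist at $p$ and rule out spurious self-twists that would not correspond to CM.
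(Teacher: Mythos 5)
Your first half — Ramanujan in weight $k\geq 2$, bounded degree over $\Q(\mu_{p^{\infty}})$, finiteness of $\ell$-Weil numbers up to $\mu_{p^{\infty}}$-multiples, pigeonholing, and the rigidity theorem to get $\alpha_{\ell}=c_{\ell}(1+T)^{e_{\ell}}$ — is essentially the paper's (and Hida's) first half, modulo the technical point that one needs the variant of \cref{thm:one_roi_rigidity} allowing fractional exponents (Hida's Proposition 4.1), since the specializations of $\alpha_{\ell}/w_{\ell}$ are only known to lie in $\mu_{p^{\infty}}$. The endgame, however, has genuine gaps. First, the asserted relation $a_{\ell}(f_{\zeta'})=\chi_{\zeta,\zeta'}(\ell)\,a_{\ell}(f_{\zeta})$ is false in general: the two Frobenius eigenvalues scale by \emph{different} roots of unity, namely $(\zeta'/\zeta)^{e_{\ell}}$ and $(\zeta'/\zeta)^{f_{\ell}}$ with $e_{\ell}\neq f_{\ell}$ typically (their sum, not each exponent, is pinned down by the determinant $\ell S_{\ell}$), so the trace does not transform by a single character value. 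Hida's way around exactly this is to compare traces of $\Frob_{\ell}^{m}$ for $m$ the order of $\zeta'/\zeta$ (which kills the $p$-power root-of-unity factors and gives an honest equality $\Trace(\rho_{f,q}(\Frob_{\ell})^{m})=\Trace(\rho_{g,q}(\Frob_{\ell})^{m})$), and then to convert statements about powers of Frobenius into statements about genuine representations via $\Trace(\rho^{m})=\Trace(\Sym^{m}\rho)-\Trace(\Sym^{m-2}\rho\otimes\det\rho)$.

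Second, you cannot promote an identity of Frobenius data on a set $\Sigma$ of merely positive density to a global isomorphism by Chebotarev plus the determinant relation; this is where the paper invokes Rajan's theorem, which in turn needs the large-image input for non-CM forms (Dimitrov plus the classification of compact subgroups of $\SL_{2}(\Z_{q})$) — that is precisely how the contradiction hypothesis ``neither $f$ nor $g$ has CM'' enters. Third, even granting a twist relation $\rho_{f_{\zeta'}}\cong\rho_{f_{\zeta}}\otimes\chi$ between two \emph{distinct} specializations, applying it twice gives back $\rho_{f_{\zeta}}\cong\rho_{f_{\zeta}}$, not a self-twist $\rho_{f_{\zeta}}\cong\rho_{f_{\zeta}}\otimes\chi^{2}$; so no induction/CM structure follows this way. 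The paper instead extracts a contradiction from the ordinary local structure at $p$: comparing the two upper-triangular shapes through the equality of symmetric powers (matched by powers of the cyclotomic character), the putative twisting character $\chi$ would have to be simultaneously unramified ($i=0$) and nontrivial on inertia ($i=1$, using $\zeta\neq 1$), which is absurd; CM then follows by contradiction together with \cref{prop:cm_component_unique}. Your ``main obstacle'' paragraph correctly identifies where the difficulty lies, but the proposal as written does not supply the mechanism (powers of Frobenius, symmetric powers, Rajan, compatible systems, and the inertia analysis at $p$) that actually closes it.
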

\begin{proof}[Proof Sketch.]
For $\ell \in \Sigma$ let $\alpha_{\ell}$ be a choice of root of the characteristic polynomial of $\rho_{\mathbf{I}}(\Frob_{\ell})$ where $\rho_{\mathbf{I}}$ is the $p$-adic Galois representation attached to $\mathbf{I}$.
We assume that $\mathbf{I}$ is large enough to contain $\alpha_{\ell}$, extending it and the points $P_{k, \zeta}$ if necessary.
Since $k \geq 2$ the Ramanujan conjecture is known for the specializations $f_{\zeta}$ of $\mathbf{I}$ under $P_{k, \zeta}$.
As a consequence of the Ramanujan conjecture we have that $\alpha_{\ell}(P_{k, \zeta})$ is an $\ell$-Weil number.
Our condition bounding the degrees of Hecke fields yields that $\alpha_{\ell}(P_{k, \zeta})$ has degree at most $2C_{\ell}$ over $\Q(\mu_{p^{\infty}})$.
As there are only finitely many such $\ell$-Weil numbers up to equivalence (see corollary 2.2 of \cite{hida_transcendence}), we pick $\pi_{\ell}$ which occurs infinitely often up to equivalence as the specialization $\alpha_{\ell}(P_{k, \zeta})$ for $\zeta \in S$.
Thus we have that $\pi_{\ell}^{-1} \alpha_{\ell}$ specializes to a $p$-power root of unity infinitely often, hence by a generalization of \cref{thm:one_roi_rigidity} which allows for $p$-power roots of $(1 + T)$(see Proposition 4.1 of \cite{hida_transcendence}), $\alpha_{\ell}$ is of the form $\pi_{l}(1 + T)^{e_{\ell}}$ with $e_{\ell} \neq 0 \in \Q_{p}$ for each $\ell \in \Sigma$.

Choose a $p$-power root of unity $\zeta \neq 1$, and consider the forms $f$ and $g$ which arise as specialization of $\mathbf{I}$ by $P_{k, 1}$ and $P_{k, \zeta}$.
We assume for a contradiction that neither $f$ nor $g$ has complex multiplication.

We let $\alpha_{\ell}(f) = \alpha_{\ell}(P_{k, 1})$ be the Frobenius eigenvalue of $f$ produced by $\alpha_{\ell}$, and similarly $\alpha_{\ell}(g) = \alpha_{\ell}(P_{k, \zeta})$.
By our control of $\alpha_{\ell}$ we know that $\alpha_{\ell}(g) = \zeta^{e_{\ell}} \alpha_{\ell}(f)$.
Since the characteristic polynomial of $\rho_{f}(\Frob_{\ell})$ (with any choice of coefficients) has constant term a power of $\ell$ times a root of unity, we see that a similar relationship holds with the second eigenvalue of Frobenius of each form $f, g$.
Choosing a prime $q$ which splits completely in $\Q(f, g)$ (the compositum of the Hecke fields $\Q(f)$ and $\Q(g)$) for convenience, we see that if $\zeta^{m} = 1$ that
\[
\Trace(\rho_{f, q}(\Frob_{\ell})^{m}) = \Trace(\rho_{g, q}(\Frob_{\ell})^{m})
\]
for each $\ell \in \Sigma$.
Using that $\Trace(\rho^{m}) = \Trace(\Sym^{m}(\rho)) - \Trace(\Sym^{m-2}(\rho) \otimes \det(\rho))$ for any $2$-dimensional representation $\rho$, we obtain
\begin{align*}
& \Trace\left(\Sym^{m}(\rho_{f, q}) \oplus \left(\Sym^{m-2}(\rho_{g, q}) \otimes \det(\rho_{g, q})\right)\right) \\
= & \Trace\left(\Sym^{m}(\rho_{g, q}) \oplus \left(\Sym^{m-2}(\rho_{f, q}) \otimes \det(\rho_{f, q})\right)\right)
\end{align*}
when evaluated on any $\Frob_{\ell}$ for $\ell \in \Sigma$.

Since $f$ and $g$ are not CM forms by assumption, we claim that for $q$ sufficiently large the image of their $q$-adic Galois representations contains an open subgroup of $\SL_{2}(\Z_{q})$.
The residual representations contain $\SL_{2}(\F_{q})$ for large enough $q$ (see Section 0.1 of \cite{dimitrov}), and the classification of compact subgroups of $\SL_{2}(\Z_{q})$ shows that we must therefore have an open subgroup of $\SL_{2}(\Z_{q})$ in the image.
In particular since the representations in question are irreducible and $\Sigma$ has positive density, a result of Rajan (see Theorem 2 of \cite{rajan}) guarantees that we have an equality of representations
\[
\Sym^{m}(\rho_{f, q}) \oplus \left(\Sym^{m-2}(\rho_{g, q}) \otimes \det(\rho_{g, q})\right) = \Sym^{m}(\rho_{g, q}) \oplus \left(\Sym^{m-2}(\rho_{f, q}) \otimes \det(\rho_{f, q})\right)
\]
when restricted to a finite index subgroup $G_{K}$ of $G_{\Q}$.
We also have that $\Sym^{m}(\rho_{f, q}) = \Sym^{m}(\rho_{g, q}) \otimes \chi$ for some finite order character $\chi$, using the same result of Rajan.

Since the representations $\rho_{f, q}$ and $\rho_{g, q}$ are members of compatible systems, so are their symmetric powers.
Since one member of the compatible system $\Sym^{m}(\rho_{f})$ agrees with one member of the compatible system $\Sym^{m}(\rho_{g}) \otimes \chi$, the whole systems agree; thus we conclude that for the prime $p$ 
\[
\Sym^{m}(\rho_{f, p}) = \Sym^{m}(\rho_{g, p}) \otimes \chi.
\]
We know that the $p$-adic Galois representation of a $p$-ordinary form is upper triangular when restricted to the decomposition group at $p$.
In particular we know that $\rho_{f, p}|G_{\Q_{p}}$ has the form
\[
\begin{bmatrix} \cyclo_{p}^{k-1}\psi_{f} & \ast \\ 0 & \lambda_{f} \end{bmatrix}
\]
for some characters $\psi_{f}, \lambda_{f}$; similarly $\rho_{g, p}|G_{\Q_{p}}$ is of that form with characters $\psi_{g}, \lambda_{g}$.
Since the symmetric powers agree up to twist, we have an equality of sets of characters
\[
\{\cyclo_{p}^{i(k-1)}\psi_{f}^{i} \lambda_{f}^{m-i}: i = 0, \ldots, m\}	= \{\cyclo_{p}^{i(k-1)}\psi_{g}^{i}\lambda_{g}^{m-i}\chi: i = 0, \ldots, m\}.
\]
Note that $\lambda_{f}, \lambda_{g}$ are unramified, and $\psi_{f}, \psi_{g}$ have finite order on inertia with $\psi_{f}\neq\psi_{g}$ on inertia by the choice of $\zeta \neq 1$.
By comparing powers of the cyclotomic character which appear in the above equality of sets of characters, we conclude that $\psi_{f}^{i}\lambda_{f}^{m-i} = \psi_{g}^{i}\lambda_{g}^{m-i}\chi$ for each $i = 0, \ldots, m$.
Rearranging we get that
\[
\frac{\psi_{f}^{i}}{\psi_{g}^{i}} = \frac{\lambda_{g}^{m-i}}{\lambda_{f}^{m-i}} \chi
\]
for each $i = 0, \ldots, m$; in particular when restricted to inertia this yields $\frac{\psi_{f}^{i}}{\psi_{g}^{i}} = \chi$ for each $i$.
Taking $i=0$ we see that $\chi$ is unramified, but taking $i = 1$ shows that $\chi$ must be non-trivial on inertia.
This is a contradiction.

The only assumption that we made outside of the original hypotheses was that neither $f$ nor $g$ has CM, in order to use the fact that Galois representations attached to non-CM forms have large image.
Since we arrived at a contradiction it must be the case that at least one of them has CM, and hence the whole component $\mathbf{I}$ has CM by \cref{prop:cm_component_unique}.


\end{proof}

\begin{remark}
While Hida's proof is written in the case of parallel weight $k \geq 2$ Hilbert modular forms, and we've only sketched the argument in the case of elliptic modular forms, the argument applies equally well to a partially ordinary family and fixed regular weight $(\underline{k}, w)$.

The only difference in the argument comes right at the end when extracting a contradiction from the equality of sets of characters.
In the partially ordinary case it is natural to work with Galois representations having a fixed determinant, rather than determinant varying with the weight as is the common choice for elliptic modular forms.
Since we're working with a fixed weight and varying the Nebentypus character, there's no obstruction to still matching up characters based on the power of the cyclotomic character that appears (i.e. based on their Hodge-Tate weights).
From there an slightly modified argument from the elliptic provides a contradiction, so long as the chosen $\zeta$ has sufficiently large order.
\end{remark}

\begin{remark}\label{rem:hida_theorem_weight_one}
One might ask if \cref{thm:hida_cm_iff_hecke_field} can be applied to a set of classical weight one eigenforms arising from $\mathbf{I}$ and having appropriately bounded Hecke fields.
There are two places where the regular weight assumption is used in the proof.
First, the fact that the Frobenius eigenvalues in the Galois representation attached to a regular weight form are Weil numbers, which is a consequence of the Ramanujan conjecture.
Second, the fact that the $\ell$-adic Galois representation attached to a non-CM eigenform of regular weight has large image for sufficiently large $\ell$.

If we are willing to use the (known!) Ramanujan conjecture for weight one eigenforms then the first use of the regular weight assumption can be taken care of.
The second presents more difficulty in generalizing directly to the weight one case.
However, we expect that given the strong control over Frobenius eigenvalues across the entire component $\mathbf{I}$ provided by the Ramanujan conjecture it should be possible to start with bounded Hecke fields in weight one to establish rigidity of the Frobenius eigenvalues, and then carry out the rest of Hida's argument in regular weight.

We remark once again that the role of \cref{sec:rigidity} and \cref{sec:construction} is to provide a method by which bounds on Hecke fields may be propagated from low weight into regular weight.
\end{remark}

\subsection{Proof of the main theorem}\label{sec:proof}

We are finally in a position to assemble all the ingredients of the previous two sections in order to prove our main theorem.
We remind the reader that the main goal is to prove that a component $\mathbf{I}$ of $\mathbf{H}^{\ord}$ has CM if it admits infinitely many classical weight one specializations, which we do by propagating information about Hecke fields from low weight into regular weight so that we many apply Hida's result \cref{thm:hida_cm_iff_hecke_field}.

\begin{theorem}\label{thm:classical_pw1_implies_hida}
A reduced irreducible component $\mathbf{I}$ of $\mathbf{H}^{\ord}$ has CM if and only if it admits infinitely many classical weight one specializations.
\end{theorem}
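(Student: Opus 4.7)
The theorem is an equivalence, and I address the two directions separately. The ``if'' direction follows from the construction of CM components recalled in \cref{sec:hida_cm}. If $\mathbf{I}$ has CM by an imaginary quadratic field $E$ in which $p$ splits, then $\mathbf{I}$ arises as a component of $A = \calO\llbracket W \rrbracket$ for the id\`{e}le class group $W$ of conductor $\frakm\frakp^\infty$. Since $\Gamma = 1 + p\Z_p$ has finite index in $W$, for each $\zeta \in \mu_{p^\infty}$ the character of $\Gamma$ sending $1+p$ to $\zeta$ extends to a finite-order character $\psi$ of $W$, which is the $p$-adic avatar of an algebraic Hecke character of trivial infinity-type. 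The theta series construction from \cref{sec:cm_forms} attaches a classical weight-one eigenform to $\psi$, realizing a specialization of $\mathbf{I}$ above $P_{1,\zeta}$. Varying $\zeta$ produces infinitely many such classical weight-one specializations.

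For the converse, assume $\mathbf{I}$ admits infinitely many classical weight-one specializations. The plan is to deduce a uniform bound on the degrees of Hecke fields of weight-$k$ specializations of $\mathbf{I}$ for some fixed $k \geq 2$, and then apply Hida's criterion \cref{thm:hida_cm_iff_hecke_field}. First, invoke \cref{thm:components} to produce components $\mathbf{I}_1 = \mathbf{I}, \mathbf{I}_2, \ldots, \mathbf{I}_m$, an integral extension $M$ of $\Lambda$, embeddings $e_i: \mathbf{I}_i \hookrightarrow M$, and the $2m$-dimensional representation $r: G_\Q \to \GL_{2m}(\Frac(M))$. By \cref{lem:coefficient_rigidity}, each coefficient $A_{\ell, j}$ of the characteristic polynomial of $r(\Frob_\ell)$ has the rigid shape
\[
A_{\ell, j} = \sum_{i=1}^{B_{\ell,j}} d_{i,\ell,j}\,(1+T)^{e_{i,\ell,j}}
\]
for algebraic coefficients $d_{i,\ell,j}$ and exponents $e_{i,\ell,j} \in \Z_p$.

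The next step propagates this shape from weight one into regular weight. Fix any $k \geq 2$. For every $\zeta \in \mu_{p^\infty}$, any $\Qbar_p$-point $P$ of $M$ above $P_{k,\zeta}$ restricts via the $e_i$ to points of the $\mathbf{I}_i$ producing classical weight-$k$ eigenforms by \cref{thm:hida_theory_main_statement}, so $A_{\ell, j}(P)$ is algebraic. The hypotheses of \cref{prop:pi_algebraic} are therefore met, and each $\pi_{i,\ell,j} := (1+p)^{e_{i,\ell,j}}$ is algebraic. Consequently the specializations
\[
A_{\ell, j}(P_{k, \zeta}) = \sum_{i=1}^{B_{\ell,j}} d_{i,\ell,j}\, \zeta^{e_{i,\ell,j}}\, \pi_{i,\ell,j}^{k-1}
\]
all lie in a single finite extension $F_{\ell}$ of $\Q(\mu_{p^\infty})$ independent of $\zeta$.

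To finish, I would translate this control of symmetric functions back into control of individual Hecke eigenvalues. If $g$ is the weight-$k$ specialization of $\mathbf{I}$ arising from such a $P$, then $\det(XI - r_g(\Frob_\ell))$ is a polynomial of degree $2m$ with coefficients in $F_{\ell}$, and $a_\ell(g)$ is the sum of two of its roots, so $[\Q(\mu_{p^\infty}, a_\ell(g)) : \Q(\mu_{p^\infty})] \leq C_\ell$ for some constant $C_\ell$ depending only on $\ell$. Taking $\Sigma$ to be all primes $\ell \nmid Np$ (a set of density one), \cref{thm:hida_cm_iff_hecke_field} applies and forces $\mathbf{I}$ to have CM. The principal obstacle is bridging weight one and regular weight: \cref{lem:weight_one_hecke_field_bound} only constrains individual forms in weight one, whereas Hida's criterion demands bounded Hecke fields in weight $\geq 2$ where the Ramanujan conjecture is available. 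Bridging this gap is precisely the role of \cref{sec:construction} and \cref{sec:rigidity}, which together force the characteristic polynomial coefficients of $r(\Frob_\ell)$ into the rigid form $\sum_i d_i(1+T)^{e_i}$ whose algebraic parameters then propagate the Hecke field bound into every higher weight.
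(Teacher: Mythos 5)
Your proposal is correct and follows essentially the same route as the paper: the CM direction via weight-one theta series of finite-order Hecke characters, and the converse by combining \cref{thm:components}, \cref{lem:coefficient_rigidity}, and \cref{prop:pi_algebraic} to transport the weight-one Hecke-field bound into a uniform bound on $[\Q(\mu_{p^{\infty}}, a_{\ell}(f)):\Q(\mu_{p^{\infty}})]$ in a fixed weight $k \geq 2$, then invoking \cref{thm:hida_cm_iff_hecke_field}. The only cosmetic difference is that the paper is slightly more careful to exclude the finitely many points where $r$ has poles (hence the ``almost all'' specializations and the choice of $S$ avoiding them), which does not affect the argument.
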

\begin{proof}
If $\mathbf{I}$ has CM, then any specialization in a classical weight is a classical CM form; in particular there will be infinitely many classical weight one eigenforms arising as specializations of a CM component $\mathbf{I}$.
Thus the real task in this proof is to show that if $\mathbf{I}$ admits infinitely many classical weight one specializations, then $\mathbf{I}$ has CM.

We show that the conditions of \cref{thm:hida_cm_iff_hecke_field} apply if we are given such an infinite set of classical weight one specializations, which is now just a matter of assembling the ingredients of \cref{sec:rigidity} and \cref{sec:construction}.
In fact we'll show a stronger statement than necessary to apply \cref{thm:hida_cm_iff_hecke_field}.
For each $\ell \nmid Np$ we produce a constant $C_{\ell}$ such that for almost all classical specializations $f$ of $\mathbf{I}$ we have that
\[
[\Q(\mu_{p^{\infty}}, a_{\ell}(f)): \Q(\mu_{p^{\infty}})] \leq C_{\ell}.
\]

We recall that the main result of \cref{sec:construction} was the construction of a Galois representation $r: G_{\Q} \to \GL_{2m}(\Frac{M})$ for some integral extension $M$ of $\Lambda$.
The key property of $r$ is that for some infinite set $R$ of classical weight one specializations of $\mathbf{I}$ we have for $f \in R$ that
\[
r: G_{\Q} \to \GL_{2m}(\Frac(M)) \overset{P_{f}}{\to} \GL_{2m}(\Qbar_{p})
\]
is well-defined, and equal to the direct sum of the $p$-adic Galois representations attached to the external Galois conjugates $f_{1}, \ldots, f_{m}$ of $f$ over its character field.
In \cref{lem:coefficient_rigidity} we showed that each coefficient $A_{\ell, j}$ of the characteristic polynomial $A_{\ell}(X) = \sum_{j=0}^{2m} A_{\ell, j}X^{j}$ of $r(\Frob_{\ell})$ is controlled by the rigidity results of \cref{sec:rigidity}.
Namely, for each $j = 0, \ldots, 2m$ we have an expression
\[
A_{\ell, j} = \sum_{i=1}^{n_{j}} d_{i, j} (1 + T)^{e_{i, j}}
\]
for some algebraic $d_{i, j}$ and $p$-adic integers $e_{i, j}$.

A final key feature of the representation $r$ is that it ``sees'' almost all classical specializations of $\mathbf{I}$.
If $f$ is a classical eigenform arising as the specialization of $\mathbf{I}$ through a $\Qbar_{p}$ point $P$, denote by $P$ again an extension of this point to $M$.
Then for almost all $P$ we have that the image of $r$ lands in $\GL_{2m}(M_{P})$ (as in part (5) of \cref{thm:components}), so we can push forward $r$ through $P$ to obtain a representation into $\GL_{2m}(\Qbar_{p}$).
By the construction of $r$ as a direct sum of representations attached to components of $\mathbf{H}^{\ord}$, we see that $\rho_{f, p}$ is a direct summand of this specialization of $r$.
In particular this shows that the Frobenius eigenvalues $\alpha_{f}, \beta_{f}$ of $\rho_{f, p}(\Frob_{\ell})$ are roots of $A_{\ell}(P)(X)$.
Since the other direct summands of this specialization of $r$ are the Galois representations attached to other classical forms, we may also conclude that all coefficients of $A_{\ell}(P)(X)$ are algebraic.

We now make use of our exact formula for the $A_{\ell, j}$, as explained in \cref{sec:exponential}.
Define $\pi_{i, j} = (1+p)^{e_{i, j}}$.
Since almost all specializations of $\mathbf{I}$ in weights $k \geq 2$ are classical and are witnessed by $r$, we have that almost all specializations $A_{\ell, j}(P_{k, \zeta})$ are algebraic, as these specializations are polynomial combinations of Frobenius eigenvalues of classical forms.
Thus \cref{prop:pi_algebraic} shows that the $\pi_{i, j}$ are all algebraic.

We conclude that for almost all points $P_{k, \zeta}$ of $\mathbf{I}$ for $k \geq 2$ satisfy that $A_{\ell}(P_{k, \zeta})(X)$ has coefficients in $L_{\ell}(\zeta)$, where
\[
L_{\ell} = \Q(\{d_{i, j}\}, \{\pi_{i, j}\}).
\]
Note that $L_{\ell}$ is a finite extension of $\Q$ since we've adjoined finitely many algebraic quantities to $\Q$.
Therefore for almost all $f$ arising as specializations of $\mathbf{I}$ we have that the eigenvalues of $\rho_{f, p}(\Frob_{\ell})$, and hence also the $T_{\ell}$-eigenvalue $a_{\ell}(f)$, lie in a degree at most $2m$ extension of $L_{\ell}(\zeta)$, since they are roots the degree $2m$ polynomial $A_{\ell}(P)(X)$ which has coefficients in $L_{\ell}(\zeta)$.
Adjoining all $p$-power roots of unity, we see that $a_{\ell}(f)$ has degree at most $2m[L_{\ell}:\Q]$ over $\Q(\mu_{p^{\infty}})$.

Define $C_{\ell}$ to be this constant
\[
C_{\ell} = 2m[L_{\ell}:\Q]
\]
which depends only on $\mathbf{I}$ and $\ell$, and not on our choice of regular weight specialization.
We now have that \cref{thm:hida_cm_iff_hecke_field} applies to $\mathbf{I}$ with any positive density subset of primes $\ell \nmid Np$, these choices of $C_{\ell}$, any fixed choice of $k \geq 2$, and a choice of any infinite subset $S$ of $\mu_{p^{\infty}}$ which avoids a finite set of pairs $(k, \zeta)$ where the representation $r$ has poles.
Thus we conclude that $\mathbf{I}$ has CM!
\end{proof}

\section{Hilbert modular forms of partial weight one}\label{sec:hilbert}

In this section we discuss extensions of our method to the case of partial weight one Hilbert modular forms.
Fix a totally real field $F$ of degree $d = [F:\Q]$ in which our prime $p$ splits completely.
We will discuss Hilbert modular forms for the field $F$.

We think of the weights of a Hilbert modular form as a tuple of integers indexed by the real embeddings of $F$, along with an extra parameter to fix the transformation law, so a Hilbert modular form over $F$ has $d+1$ weights which are usually notated $(k_{1}, \ldots, k_{d}, w)$.
By fixing an isomorphism $\C \to \C_{p}$, we get a bijection between the real and $p$-adic embeddings of $f$, so we can equally well think of the first $d$ weights as being indexed by the $p$-adic places of $f$.

For each of the $p$-adic places $v_{1}, \ldots, v_{d}$ of $F$, there is an operator $U_{v_{i}}$ acting on Hilbert modular forms of level divisible by $v_{i}$.
Suppose that $f$ is an eigenform for the operators $U_{v_{i}}$, of weight $(k_{1}, \ldots, k_{d}, w)$.
Under a suitable choice of normalization for these operators depending on $w$, we have for $a_{v_{i}}$ the eigenvalue of $U_{v_{i}}$ acting on $f$ that
\[
0 \leq \ord_{p}(a_{v_{i}}) \leq k_{i} - 1.
\]
For a $p$-adic place $v$ of $F$ we say that a Hilbert modular eigenform is $v$-ordinary if its $U_{v}$ eigenvalue is a $p$-adic unit; we say that $f$ is $p$-ordinary if it is $v$-ordinary for each $v|p$.
Since the operators $U_{v_{i}}$ commute we can equally define a $p$-ordinary Hilbert modular form to be one whose $U_{p} = \prod_{i=1}^{d} U_{v_{i}}$ eigenvalue is a $p$-adic unit.

Hida has constructed families of $p$-ordinary Hilbert modular forms.
The weight space for these families of forms is now $d+1$-dimensional, and much of the theory that is familiar in the elliptic case is also known in the Hilbert case.
See \cite{hida_hilbert_i} and \cite{hida_hilbert_ii} for the construction of these families.

\subsection{Galois conjugates of partial weight one forms}\label{sec:hilbert_galois_conj}

We lay out here why the main strategy of this article does not immediately produce results for partial weight one forms.
The issue lies in generalizing \cref{cor:weight_one_conjugates}.

Suppose that we have a Hilbert modular eigenform $f$, of partial weight one.
Explicitly let us say that the first $r$ weights $k_{1}, \ldots, k_{r}$ are equal to $1$, with the others $k_{r+1}, \ldots, k_{d}$ being greater than $1$.
Consider a Galois conjugate $f^{\sigma}$ of $f$ for some $\sigma$ in the absolute Galois group of the character field of $f$ over $F$.
This form $f^{\sigma}$ has the same weights as $f$, and its Hecke eigenvalues are the Galois conjugates by $\sigma$ of those of $f$.

Let us assume now that our starting form $f$ is $p$-ordinary.
By the same argument as applied in \cref{cor:weight_one_conjugates}, we have that $f^{\sigma}$ is still $v_{i}$-ordinary at those $i$ for which $k_{i} = 1$.
However, all we know at the other $p$-adic places is that the $U_{v_{i}}$ eigenvalue has valuation bounded between $0$ and $k_{i} - 1 > 0$.
This is why our method does not produce results immediately for partial weight one forms living in Hida's full $p$-ordinary families: Galois conjugation may ruin ordinarity at non-weight-$1$ places.

The fact that Galois conjugates of elliptic $p$-ordinary eigenforms of weight one remain $p$-ordinary was crucial to our strategy of characterizing families containing many of these by studying their Hecke fields.
In order to apply Hida's characterization of CM families as those having bounded Hecke fields, we need some input to bound our Hecke fields in low weight, so analogs of \cref{cor:weight_one_conjugates} and \cref{lem:weight_one_hecke_field_bound} are necessary for our strategy to function.

\subsection{Partially ordinary families}\label{sec:partially_ordinary}

However, all is not lost in generalizing \cref{cor:weight_one_conjugates} to the partial weight one case.
The argument outlined above does show that if $f$ has its first $r$ weights equal to $1$, and is $v_{i}$-ordinary at those corresponding $p$-adic places, then appropriate Galois conjugates of $f$ will still be $v_{i}$-ordinary for each $i = 1, \ldots, r$.
Thus if we choose to work with families of forms which are $v_{i}$-ordinary for $i = 1, \ldots, r$, we can hope to recover a uniform bound on Hecke fields as in \cref{lem:weight_one_hecke_field_bound} for the forms in this family which are weight one at $v_{1}, \ldots, v_{r}$.
The setback here is that these ``partially ordinary'' families are not at all well-studied!

In \cite{wilson}, a candidate for a partially ordinary Hecke algebra is constructed using the algebraic approach to Hida theory: Wilson works with algebraic automorphic forms in the Betti cohomology of quaternionic Shimura varieties of dimension $0$ or $1$, depending on the parity of $d = [F:\Q]$.
This construction produces an algebra partially ordinary Hecke algebra which is torsion-free over an appropriate Iwasawa algebra, and for which the control theorem is known only up to a finite kernel.
Hida's article \cite{hida_hecke_fields_of_hilbert_modular_analytic_families} also works with partially ordinary families.
Hida sketches some parts of their construction, but references \cite{wilson} for many of their properties.
In this article Hida states without proof the existence of the Galois representations attached to these partially ordinary families.
Partially ordinary families are also discussed briefly in \cite{thorne_dihedral}, again working cohomologically with quaternionic automorphic forms.
We should mention that similar families have been constructed independently by Yamagami in \cite{yamagami} and by Johansson--Newton in \cite{johansson_newton}.
Both Yamagami and Johansson--Newton work with more general $v$-finite-slope rather than $v$-ordinary families, though one can think of the $\ord_{p}(U_{v}) = 0$ locus in the $v$-finite-slope eigenvariety as being the $v$-ordinary families that we study.

Worse than the fact that the literature on partially ordinary families is not as mature as that of fully ordinary families is the fact that all existing constructions of partially ordinary families are algebraic, rather than geometric.
The algebraic description has the downside that it does not give any information about the inclusion of forms of partial weight one into these families.
Since it is crucial for us to know that every suitably $v$-ordinary partial weight one form is included in such a family, we believe that a geometric construction of these families is needed in order to apply them in our situation.

The strategy of this article is adapted to deal with $1$-dimensional families.
We state below what we expect our strategy to prove given a suitable construction of $1$-dimensional partially ordinary families, and in the next section we outline how one might combine the result for $1$-dimensional families with a putative classicality result for partial weight one forms into a statement about families of any dimension.

Fix weights $k_{2}, \ldots, k_{d}, w$, all of which are odd and at least $3$.
We will consider forms of weight $(k, k_{2}, \ldots, k_{d}, w)$, and we want the fixed weights to be odd so that the partial weight one forms we work with are paritious (all of the weights have the same parity, which guarantees algebraicity of Hecke eigenvalues and $\GL_{2}$ rather than $\PGL_{2}$ valued Galois representations).
The first weight will be the one that varies across the family while the others remain constant.
We also fix a tame level $\mathfrak{N}$, which may be divisible by any of $v_{2}, \ldots,  v_{d}$ but is not divisible by $v_{1}$.
We suppose that we have constructed a partially ordinary Hecke algebra $\mathbf{H}$ satisfying the following properties:
\begin{itemize}
\item For each prime $\mathfrak{l} \nmid \mathfrak{N}v_{1}$ of F, there is an element $T_{\mathfrak{l}} \in \mathbf{H}$.
\item For each prime $\mathfrak{l} | \mathfrak{N}v_{1}$ of F, there is an element $U_{\mathfrak{l}} \in \mathbf{H}$.
\item $\mathbf{H}$ is a finite free $\Lambda = \Z_{p}\llbracket T \rrbracket$ module.
\item $\mathbf{H}$ is the ``universal Hecke algebra'' parametrizing $v_{1}$-ordinary Hilbert modular eigenforms of weight $(k, k_{2}, \ldots, k_{d}, w)$ and levels $\mathfrak{N}v_{1}^{r}$.
More precisely, an analog of \cref{thm:hida_theory_main_statement} holds for $\mathbf{H}$.
\item Associated to any component $\mathbf{I}$ of $\mathbf{H}$ is a Galois representation $G_{F} \to \GL_{2}(\Frac(\mathbf{I}))$ interpolating the Galois representations of the eigenforms which $\mathbf{I}$ specializes to.
An analog of \cref{thm:hida_theory_galois_representation} holds for these representations.
\item Any $v_{1}$-ordinary Hilbert modular eigenform of weight $(1, k_{2}, \ldots, k_{d}, w)$ arises as a specialization of $\mathbf{H}$.
An analog of \cref{prop:hida_theory_weight_one} is what is required.
\end{itemize}

Given such a family $\mathbf{H}$ we expect that the strategy of this article immediately adapts to prove the following result: a component $\mathbf{I}$ of $\mathbf{H}$ contains infinitely many classical eigenforms of partial weight one if and only if that component has complex multiplication.

\subsection{Returning to full families}\label{sec:full_families}

We turn now to the question of when Hida's full $p$-ordinary families contain many classical forms of low weight.
We note that the original technique of Ghate--Vatsal does adapt to the case of families containing a Zariski dense set of classical parallel weight one forms, the details of which are worked out in \cite{balasubramanyam_ghate_vatsal}.
We therefore focus on explaining how to recover a result characterizing full families which contain a Zariski dense set of classical partial weight one forms under the assumption of the result from the previous section for $1$-dimensional families along with a suitable classicality theorem.

By analogy with elliptic forms of weight one, we expect that forms which are weight one at $v$ and $v$-ordinary in fact have Galois representations which are split, rather than just upper triangular, on a decomposition group at $v$.
It may be possible to extract this fact from the literature, though this has not been clearly stated anywhere to the author's knowledge.
Moreover we expect that this splitting of the local Galois representation characterizes classical forms of partial weight one among $p$-adic forms.
To be more precise, we expect that ``classicality'' results of the following type hold: suppose that $f$ is a $p$-adic Hilbert modular eigenform for which
\begin{itemize}
\item the weight $(k_{1}, \ldots, k_{d}, w)$ of $f$ is arithmetic (the $k_{i}$ are paritious positive integers),
\item for those $i$ with $k_{i} > 1$, $f$ has finite $U_{v_{i}}$ slope in the range $[0, k_{i} - 1)$ (analog of the classical Coleman classicality condition \cite{coleman}),
\item and for those $i$ with $k_{i} = 1$, the Galois representation attached to $f$ is split when restricted to a decomposition group at $v_{i}$;
\end{itemize}
then (perhaps assuming some technical conditions on $f$ and its Galois representation) $f$ is classical.

The literature on classicality theorems for (Hilbert) modular forms is well-developed, though as yet no results have appeared for forms of partial weight one.
The basic techniques used are analytic continuation and gluing of overconvergent eigenforms, as pioneered by Buzzard--Taylor in \cite{buzzard_taylor}, Buzzard in \cite{buzzard}, and Kassaei in \cite{kassaei}.
Much work has been done on extending these techniques to the Hilbert setting; the simplest case is when $p$ splits in the extension $F/\Q$ where one can essentially apply the arguments of the elliptic case ``prime by prime'' to establish analytic continuation over a large enough region of the relevant Hilbert modular variety.
We expect that once the ecosystem for dealing with partial weight one forms is sufficiently developed this prime by prime approach and the standard techniques for establishing classicality results should prove such a result for partial weight one forms.

Suppose that we have such a classicality result for partial weight one forms and a construction of partially ordinary families as outlined in the previous section.
If we have a component of a full $p$-ordinary family which has a Zariski dense set of classical specializations which are weight one at one of the primes $v|p$, then by the Zariski density of these specializations the Galois representation attached to the component must split on a decomposition group at $v$.
By the classicality result, we then get that \emph{any} point of the family which is weight one $v$ and regular weight elsewhere must be classical.
In particular if we restrict to a $1$-dimensional slice of our component where only the weight at $v$ is allowed to vary, our result for $1$-dimensional families shows that that slice of the full family has complex multiplication, as that $1$-dimensional slice is nothing other than a component of a $v$-ordinary family.
Since there are only finitely many possible imaginary quadratic extensions of $F$ which a family with a given tame level could have complex multiplication by, we see that infinitely many of our $1$-dimensional slices must have CM by the same imaginary quadratic $E/F$.
But then we have that our full family must overlap with a CM family at a Zariski dense set of these $1$-dimensional slices, meaning our full $p$-ordinary family must in fact have CM.

We note a pleasing consequence of this result at the level of Galois representations: the argument above shows that if the Galois representation attached to a component of a full $p$-ordinary family splits on a decomposition group at one place $v|p$, then that component must have CM.
But CM families which are $p$-ordinary have Galois representations which are split at each place above $p$!
So if the Galois representation of a component splits at one place above $p$ it must split at all of them.

\printbibliography

@article {cassels,
    AUTHOR = {Cassels, J. W. S.},
     TITLE = {On a conjecture of {R}. {M}. {R}obinson about sums of roots of
              unity},
   JOURNAL = {J. Reine Angew. Math.},
  FJOURNAL = {Journal f\"{u}r die Reine und Angewandte Mathematik},
    VOLUME = {238},
      YEAR = {1969},
     PAGES = {112--131},
      ISSN = {0075-4102},
   MRCLASS = {10.66},
  MRNUMBER = {0246852},
MRREVIEWER = {W. G. H. Schaal},
       DOI = {10.1515/crll.1969.238.112},
       URL = {https://doi.org/10.1515/crll.1969.238.112},
}

@article {loxton,
    AUTHOR = {Loxton, J. H.},
     TITLE = {On the maximum modulus of cyclotomic integers},
   JOURNAL = {Acta Arith.},
  FJOURNAL = {Polska Akademia Nauk. Instytut Matematyczny. Acta Arithmetica},
    VOLUME = {22},
      YEAR = {1972},
     PAGES = {69--85},
      ISSN = {0065-1036},
   MRCLASS = {12A10},
  MRNUMBER = {0309896},
MRREVIEWER = {J. B. Kelly},
       DOI = {10.4064/aa-22-1-69-85},
       URL = {https://doi.org/10.4064/aa-22-1-69-85},
}

@article {ghate_vatsal,
    AUTHOR = {Ghate, Eknath and Vatsal, Vinayak},
     TITLE = {On the local behaviour of ordinary {$\Lambda$}-adic
              representations},
   JOURNAL = {Ann. Inst. Fourier (Grenoble)},
  FJOURNAL = {Universit\'{e} de Grenoble. Annales de l'Institut Fourier},
    VOLUME = {54},
      YEAR = {2004},
    NUMBER = {7},
     PAGES = {2143--2162 (2005)},
      ISSN = {0373-0956},
   MRCLASS = {11F80 (11F33 11R23)},
  MRNUMBER = {2139691},
MRREVIEWER = {Laurent N. Berger},
       URL = {http://aif.cedram.org/item?id=AIF_2004__54_7_2143_0},
}

@article {balasubramanyam_ghate_vatsal,
    AUTHOR = {Balasubramanyam, Baskar and Ghate, Eknath and Vatsal, Vinayak},
     TITLE = {On local {G}alois representations associated to ordinary
              {H}ilbert modular forms},
   JOURNAL = {Manuscripta Math.},
  FJOURNAL = {Manuscripta Mathematica},
    VOLUME = {142},
      YEAR = {2013},
    NUMBER = {3-4},
     PAGES = {513--524},
      ISSN = {0025-2611},
   MRCLASS = {11F80 (11F33 11F41 11F85 11R23)},
  MRNUMBER = {3117174},
MRREVIEWER = {Anton Deitmar},
       DOI = {10.1007/s00229-013-0614-1},
       URL = {https://doi.org/10.1007/s00229-013-0614-1},
}

@article {hida_hecke_fields_of_analytic_families_of_modular_forms,
    AUTHOR = {Hida, Haruzo},
     TITLE = {Hecke fields of analytic families of modular forms},
   JOURNAL = {J. Amer. Math. Soc.},
  FJOURNAL = {Journal of the American Mathematical Society},
    VOLUME = {24},
      YEAR = {2011},
    NUMBER = {1},
     PAGES = {51--80},
      ISSN = {0894-0347},
   MRCLASS = {11F25 (11E16 11F11 11F33 11F85)},
  MRNUMBER = {2726599},
MRREVIEWER = {Fabrizio Andreatta},
       DOI = {10.1090/S0894-0347-2010-00680-7},
       URL = {https://doi.org/10.1090/S0894-0347-2010-00680-7},
}

@incollection {hida_hecke_fields_of_hilbert_modular_analytic_families,
    AUTHOR = {Hida, Haruzo},
     TITLE = {Hecke fields of {H}ilbert modular analytic families},
 BOOKTITLE = {Automorphic forms and related geometry: assessing the legacy
              of {I}. {I}. {P}iatetski-{S}hapiro},
    SERIES = {Contemp. Math.},
    VOLUME = {614},
     PAGES = {97--137},
 PUBLISHER = {Amer. Math. Soc., Providence, RI},
      YEAR = {2014},
   MRCLASS = {11F11 (11E16 11F25 11F27 11F30 11F33 11F80)},
  MRNUMBER = {3220926},
MRREVIEWER = {J. Larry Lehman},
       DOI = {10.1090/conm/614/12251},
       URL = {https://doi.org/10.1090/conm/614/12251},
}

@article {hida_transcendence,
    AUTHOR = {Hida, Haruzo},
     TITLE = {Transcendence of {H}ecke operators in the big {H}ecke algebra},
   JOURNAL = {Duke Math. J.},
  FJOURNAL = {Duke Mathematical Journal},
    VOLUME = {163},
      YEAR = {2014},
    NUMBER = {9},
     PAGES = {1655--1681},
      ISSN = {0012-7094},
   MRCLASS = {11E25 (11F27 11F41 11F80)},
  MRNUMBER = {3217764},
MRREVIEWER = {Fausto Jarqu\'{\i}n Z\'{a}rate},
       DOI = {10.1215/00127094-2690478},
       URL = {https://doi.org/10.1215/00127094-2690478},
}

@incollection {hida_growth_of_hecke_fields,
    AUTHOR = {Hida, Haruzo},
     TITLE = {Growth of {H}ecke fields along a {$p$}-adic analytic family of
              modular forms},
 BOOKTITLE = {Families of automorphic forms and the trace formula},
    SERIES = {Simons Symp.},
     PAGES = {129--173},
 PUBLISHER = {Springer, [Cham]},
      YEAR = {2016},
   MRCLASS = {11F80 (11E25 11F27 11F41)},
  MRNUMBER = {3675166},
MRREVIEWER = {Cherng-tiao Perng},
       DOI = {10.1142/9761},
       URL = {https://doi.org/10.1142/9761},
}

@article {serban_manin-mumford,
    AUTHOR = {Serban, Vlad},
     TITLE = {An infinitesimal {$p$}-adic multiplicative {M}anin-{M}umford
              conjecture},
   JOURNAL = {J. Th\'{e}or. Nombres Bordeaux},
  FJOURNAL = {Journal de Th\'{e}orie des Nombres de Bordeaux},
    VOLUME = {30},
      YEAR = {2018},
    NUMBER = {2},
     PAGES = {393--408},
      ISSN = {1246-7405},
   MRCLASS = {11S31 (13F25 13H05 14L05)},
  MRNUMBER = {3891318},
MRREVIEWER = {Kevin P. Keating},
       URL = {http://jtnb.cedram.org/item?id=JTNB_2018__30_2_393_0},
}

@inproceedings{serban_bianchi,
  title={A finiteness result for \$p\$-adic families of Bianchi modular forms},
  author={Vlad Serban},
  year={2019}
}

@article{rajan,
    author = {Rajan, C. S.},
    title = "{On strong multiplicity one for l-adic representations}",
    journal = {International Mathematics Research Notices},
    volume = {1998},
    number = {3},
    pages = {161-172},
    year = {1998},
    month = {01},
    issn = {1073-7928},
    doi = {10.1155/S1073792898000142},
    url = {https://doi.org/10.1155/S1073792898000142},
    eprint = {https://academic.oup.com/imrn/article-pdf/1998/3/161/2140278/1998-3-161.pdf},
}

@book {lang_cyclotomic_fields,
    AUTHOR = {Lang, Serge},
     TITLE = {Cyclotomic fields {I} and {II}},
    SERIES = {Graduate Texts in Mathematics},
    VOLUME = {121},
   EDITION = {second},
      NOTE = {With an appendix by Karl Rubin},
 PUBLISHER = {Springer-Verlag, New York},
      YEAR = {1990},
     PAGES = {xviii+433},
      ISBN = {0-387-96671-4},
   MRCLASS = {11-02 (11R18 11R20 11S40 11T22)},
  MRNUMBER = {1029028},
MRREVIEWER = {T. Mets\"{a}nkyl\"{a}},
       DOI = {10.1007/978-1-4612-0987-4},
       URL = {https://doi.org/10.1007/978-1-4612-0987-4},
}

@book {koblitz_p-adic,
    AUTHOR = {Koblitz, Neal},
     TITLE = {{$p$}-adic numbers, {$p$}-adic analysis, and zeta-functions},
    SERIES = {Graduate Texts in Mathematics},
    VOLUME = {58},
   EDITION = {Second},
 PUBLISHER = {Springer-Verlag, New York},
      YEAR = {1984},
     PAGES = {xii+150},
      ISBN = {0-387-96017-1},
   MRCLASS = {11Q25 (11S80)},
  MRNUMBER = {754003},
       DOI = {10.1007/978-1-4612-1112-9},
       URL = {https://doi.org/10.1007/978-1-4612-1112-9},
}

@book {wilson,
    AUTHOR = {Wilson, Brad Lee},
     TITLE = {p-adic {H}ecke algebras and {L}-functions},
      NOTE = {Thesis (Ph.D.)--University of California, Los Angeles},
 PUBLISHER = {ProQuest LLC, Ann Arbor, MI},
      YEAR = {1993},
     PAGES = {62},
   MRCLASS = {Thesis},
  MRNUMBER = {2688967},
       URL =
              {http://gateway.proquest.com/openurl?url_ver=Z39.88-2004&rft_val_fmt=info:ofi/fmt:kev:mtx:dissertation&res_dat=xri:pqdiss&rft_dat=xri:pqdiss:9317486},
}

@article {dimitrov,
    AUTHOR = {Dimitrov, Mladen},
     TITLE = {Galois representations modulo {$p$} and cohomology of
              {H}ilbert modular varieties},
   JOURNAL = {Ann. Sci. \'{E}cole Norm. Sup. (4)},
  FJOURNAL = {Annales Scientifiques de l'\'{E}cole Normale Sup\'{e}rieure. Quatri\`eme
              S\'{e}rie},
    VOLUME = {38},
      YEAR = {2005},
    NUMBER = {4},
     PAGES = {505--551},
      ISSN = {0012-9593},
   MRCLASS = {11F80 (11F41 14G35)},
  MRNUMBER = {2172950},
MRREVIEWER = {Fabrizio Andreatta},
       DOI = {10.1016/j.ansens.2005.03.005},
       URL = {https://doi.org/10.1016/j.ansens.2005.03.005},
}

@article {buzzard,
    AUTHOR = {Buzzard, Kevin},
     TITLE = {Analytic continuation of overconvergent eigenforms},
   JOURNAL = {J. Amer. Math. Soc.},
  FJOURNAL = {Journal of the American Mathematical Society},
    VOLUME = {16},
      YEAR = {2003},
    NUMBER = {1},
     PAGES = {29--55},
      ISSN = {0894-0347},
   MRCLASS = {11F33 (11F11 11F80 11G18)},
  MRNUMBER = {1937198},
MRREVIEWER = {Gebhard B\"{o}ckle},
       DOI = {10.1090/S0894-0347-02-00405-8},
       URL = {https://doi.org/10.1090/S0894-0347-02-00405-8},
}

@article {buzzard_taylor,
    AUTHOR = {Buzzard, Kevin and Taylor, Richard},
     TITLE = {Companion forms and weight one forms},
   JOURNAL = {Ann. of Math. (2)},
  FJOURNAL = {Annals of Mathematics. Second Series},
    VOLUME = {149},
      YEAR = {1999},
    NUMBER = {3},
     PAGES = {905--919},
      ISSN = {0003-486X},
   MRCLASS = {11F33 (11F11 11F80)},
  MRNUMBER = {1709306},
MRREVIEWER = {Gebhard B\"{o}ckle},
       DOI = {10.2307/121076},
       URL = {https://doi.org/10.2307/121076},
}

@article {kassaei,
    AUTHOR = {Kassaei, Payman L.},
     TITLE = {A gluing lemma and overconvergent modular forms},
   JOURNAL = {Duke Math. J.},
  FJOURNAL = {Duke Mathematical Journal},
    VOLUME = {132},
      YEAR = {2006},
    NUMBER = {3},
     PAGES = {509--529},
      ISSN = {0012-7094},
   MRCLASS = {11F33 (11G18)},
  MRNUMBER = {2219265},
MRREVIEWER = {Thomas A. Weston},
       DOI = {10.1215/S0012-7094-06-13234-9},
       URL = {https://doi.org/10.1215/S0012-7094-06-13234-9},
}

@article {wiles_ordinary,
    AUTHOR = {Wiles, A.},
     TITLE = {On ordinary {$\lambda$}-adic representations associated to
              modular forms},
   JOURNAL = {Invent. Math.},
  FJOURNAL = {Inventiones Mathematicae},
    VOLUME = {94},
      YEAR = {1988},
    NUMBER = {3},
     PAGES = {529--573},
      ISSN = {0020-9910},
   MRCLASS = {11F41 (11F80 11R23 11R80)},
  MRNUMBER = {969243},
MRREVIEWER = {Sheldon Kamienny},
       DOI = {10.1007/BF01394275},
       URL = {https://doi.org/10.1007/BF01394275},
}

@article {deligne_serre,
    AUTHOR = {Deligne, Pierre and Serre, Jean-Pierre},
     TITLE = {Formes modulaires de poids {$1$}},
   JOURNAL = {Ann. Sci. \'{E}cole Norm. Sup. (4)},
  FJOURNAL = {Annales Scientifiques de l'\'{E}cole Normale Sup\'{e}rieure. Quatri\`eme
              S\'{e}rie},
    VOLUME = {7},
      YEAR = {1974},
     PAGES = {507--530 (1975)},
      ISSN = {0012-9593},
   MRCLASS = {10D15 (12A65)},
  MRNUMBER = {379379},
MRREVIEWER = {Stephen Gelbart},
       URL = {http://www.numdam.org/item?id=ASENS_1974_4_7_4_507_0},
}

@article {hida_iwasawa_modules,
    AUTHOR = {Hida, Haruzo},
     TITLE = {Iwasawa modules attached to congruences of cusp forms},
   JOURNAL = {Ann. Sci. \'{E}cole Norm. Sup. (4)},
  FJOURNAL = {Annales Scientifiques de l'\'{E}cole Normale Sup\'{e}rieure. Quatri\`eme
              S\'{e}rie},
    VOLUME = {19},
      YEAR = {1986},
    NUMBER = {2},
     PAGES = {231--273},
      ISSN = {0012-9593},
   MRCLASS = {11F33 (11G18 11R23)},
  MRNUMBER = {868300},
MRREVIEWER = {Yasutaka Ihara},
       URL = {http://www.numdam.org/item?id=ASENS_1986_4_19_2_231_0},
}

@article {hida_galois_representations,
    AUTHOR = {Hida, Haruzo},
     TITLE = {Galois representations into {${\rm GL}_2({\bf Z}_p[[X]])$}
              attached to ordinary cusp forms},
   JOURNAL = {Invent. Math.},
  FJOURNAL = {Inventiones Mathematicae},
    VOLUME = {85},
      YEAR = {1986},
    NUMBER = {3},
     PAGES = {545--613},
      ISSN = {0020-9910},
   MRCLASS = {11F11 (11F33 11F85 11R23)},
  MRNUMBER = {848685},
MRREVIEWER = {Ernst-Wilhelm Zink},
       DOI = {10.1007/BF01390329},
       URL = {https://doi.org/10.1007/BF01390329},
}

@article {deligne_weil_i,
    AUTHOR = {Deligne, Pierre},
     TITLE = {La conjecture de {W}eil. {I}},
   JOURNAL = {Inst. Hautes \'{E}tudes Sci. Publ. Math.},
  FJOURNAL = {Institut des Hautes \'{E}tudes Scientifiques. Publications
              Math\'{e}matiques},
    NUMBER = {43},
      YEAR = {1974},
     PAGES = {273--307},
      ISSN = {0073-8301},
   MRCLASS = {14G13},
  MRNUMBER = {340258},
MRREVIEWER = {Nicholas M. Katz},
       URL = {http://www.numdam.org/item?id=PMIHES_1974__43__273_0},
}

@article {hida_hilbert_i,
    AUTHOR = {Hida, Haruzo},
     TITLE = {On {$p$}-adic {H}ecke algebras for {${\rm GL}_2$} over totally
              real fields},
   JOURNAL = {Ann. of Math. (2)},
  FJOURNAL = {Annals of Mathematics. Second Series},
    VOLUME = {128},
      YEAR = {1988},
    NUMBER = {2},
     PAGES = {295--384},
      ISSN = {0003-486X},
   MRCLASS = {11F41 (11F85)},
  MRNUMBER = {960949},
MRREVIEWER = {Sheldon Kamienny},
       DOI = {10.2307/1971444},
       URL = {https://doi.org/10.2307/1971444},
}

@incollection {hida_hilbert_ii,
    AUTHOR = {Hida, Haruzo},
     TITLE = {On nearly ordinary {H}ecke algebras for {${\rm GL}(2)$} over
              totally real fields},
 BOOKTITLE = {Algebraic number theory},
    SERIES = {Adv. Stud. Pure Math.},
    VOLUME = {17},
     PAGES = {139--169},
 PUBLISHER = {Academic Press, Boston, MA},
      YEAR = {1989},
   MRCLASS = {11F41 (11F85)},
  MRNUMBER = {1097614},
MRREVIEWER = {Noburo Ishii},
       DOI = {10.2969/aspm/01710139},
       URL = {https://doi.org/10.2969/aspm/01710139},
}

@article {johansson_newton,
    AUTHOR = {Johansson, Christian and Newton, James},
     TITLE = {Parallel weight 2 points on {H}ilbert modular eigenvarieties
              and the parity conjecture},
   JOURNAL = {Forum Math. Sigma},
  FJOURNAL = {Forum of Mathematics. Sigma},
    VOLUME = {7},
      YEAR = {2019},
     PAGES = {Paper No. e27, 36},
   MRCLASS = {11F33 (11G40)},
  MRNUMBER = {4010559},
MRREVIEWER = {Chan-Ho Kim},
       DOI = {10.1017/fms.2019.23},
       URL = {https://doi.org/10.1017/fms.2019.23},
}

@article {yamagami,
    AUTHOR = {Yamagami, Atsushi},
     TITLE = {On {$p$}-adic families of {H}ilbert cusp forms of finite
              slope},
   JOURNAL = {J. Number Theory},
  FJOURNAL = {Journal of Number Theory},
    VOLUME = {123},
      YEAR = {2007},
    NUMBER = {2},
     PAGES = {363--387},
      ISSN = {0022-314X},
   MRCLASS = {11F85 (11F41)},
  MRNUMBER = {2301220},
MRREVIEWER = {Jacques Tilouine},
       DOI = {10.1016/j.jnt.2006.07.013},
       URL = {https://doi.org/10.1016/j.jnt.2006.07.013},
}

@inproceedings {ribet_nebentypus,
    AUTHOR = {Ribet, Kenneth A.},
     TITLE = {Galois representations attached to eigenforms with
              {N}ebentypus},
 BOOKTITLE = {Modular functions of one variable, {V} ({P}roc. {S}econd
              {I}nternat. {C}onf., {U}niv. {B}onn, {B}onn, 1976)},
     PAGES = {17--51. Lecture Notes in Math., Vol. 601},
      YEAR = {1977},
   MRCLASS = {10D15},
  MRNUMBER = {0453647},
MRREVIEWER = {Hiroshi Saito},
}

@article {coleman,
    AUTHOR = {Coleman, Robert F.},
     TITLE = {Classical and overconvergent modular forms},
   JOURNAL = {Invent. Math.},
  FJOURNAL = {Inventiones Mathematicae},
    VOLUME = {124},
      YEAR = {1996},
    NUMBER = {1-3},
     PAGES = {215--241},
      ISSN = {0020-9910},
   MRCLASS = {11F85 (14G20)},
  MRNUMBER = {1369416},
MRREVIEWER = {Fernando Q. Gouv\^{e}a},
       DOI = {10.1007/s002220050051},
       URL = {https://doi-org.proxy3.library.mcgill.ca/10.1007/s002220050051},
}

@book {diamond_shurman,
    AUTHOR = {Diamond, Fred and Shurman, Jerry},
     TITLE = {A first course in modular forms},
    SERIES = {Graduate Texts in Mathematics},
    VOLUME = {228},
 PUBLISHER = {Springer-Verlag, New York},
      YEAR = {2005},
     PAGES = {xvi+436},
      ISBN = {0-387-23229-X},
   MRCLASS = {11Fxx},
  MRNUMBER = {2112196},
MRREVIEWER = {Henri Darmon},
}

@article {thorne_dihedral,
    AUTHOR = {Thorne, Jack A.},
     TITLE = {Automorphy of some residually dihedral {G}alois
              representations},
   JOURNAL = {Math. Ann.},
  FJOURNAL = {Mathematische Annalen},
    VOLUME = {364},
      YEAR = {2016},
    NUMBER = {1-2},
     PAGES = {589--648},
      ISSN = {0025-5831},
   MRCLASS = {11F80 (11F41)},
  MRNUMBER = {3451399},
MRREVIEWER = {N\'{u}ria Vila},
       DOI = {10.1007/s00208-015-1214-z},
       URL = {https://doi-org.proxy3.library.mcgill.ca/10.1007/s00208-015-1214-z},
}

\end{document}